\newtheorem{thm}{Theorem}[section]
\newtheorem{defn}{Definition}[section]
\newtheorem{lem}{Lemma}[section]
\newtheorem{rem}{Remark}[section]
\newcommand{\C}{\mathbb{C}}
\newcommand{\hC}{\widehat{\mathbb{C}}}
\newcommand{\Z}{\mathbb{Z}}
\begin{document}

\title{{Iteration of some topologically hyperbolic maps in the family $ \lambda+z+\tan z$}}

\author[2]{Subhasis Ghora\footnote{sg36@iitbbs.ac.in(Corresponding author)} 
\footnote{This work is mostly done when the corresponding author was an SRF at Indian Institute of \\ Technology Bhubaneswar}}
\author[1]{Tarakanta Nayak\footnote{tnayak@iitbbs.ac.in} }
\affil[1]{\textit{School of Basic Sciences \hspace{10cm}Indian Institute of Technology Bhubaneswar, India}  }
\affil[2]{\textit{Department of Mathematics \hspace{10cm}C. V. Raman Global University,  Bhubaneswar, India}  }
\date{}

\maketitle
\begin{abstract}

 Iteration of the function $f_ \lambda(z)=\lambda + z+\tan z, z \in \mathbb{C}$ is investigated in this article. It is proved that for every $\lambda$, the Fatou set of $f_\lambda$ has a completely invariant Baker domain $B$; we call it the primary Fatou component. The rest of the article deals with $f_\lambda$ when it is topologically hyperbolic.  For all real $\lambda$ or $\lambda$ such that $ \lambda= k\pi +i \lambda_2$ for some integer $k$ and $0 <  \lambda_2<1$, the only other Fatou component is found to be another completely invariant Baker domain. It is proved that if $|2+\lambda^2|<1$, then the  Fatou set is the union of $B$ and infinitely many invariant attracting domains. Every such domain $U$ has exactly one invariant access to infinity and is unbounded in a special way; $\{\Im(z): z\in U\}$ is unbounded whereas  for every $z_0\in U$,  $\{\Re(z): z \in U ~\mbox{and}~\Im(z)>\Im(z_0)\}$ is bounded. If $\Im(\lambda)> \sqrt{2}+ \sinh^{-1}1$ then it is found that the primary Fatou component is the only Fatou component and the Julia set is disconnected. For every natural number $k$, the Fatou set of $f_\lambda$ for $\lambda=k\pi+i\frac{\pi}{2}$ is shown to contain   $k$ wandering domains with distinct grand orbits. These wandering domains are found to be escaping. The Fatou set of $f_\lambda$ for $\lambda=k\pi+ i\frac{\pi}{2}$ is the union of $B$, these wandering domains and their pre-images.

{\bfseries Keywords:}
Baker domain, wandering domain, unbounded set of singular values.\\
	2010 Mathematics Subject Classification 37F50  30D05  30D030 
\end{abstract}

\section{Introduction}

A transcendental meromorphic map $f:\mathbb{C} \rightarrow \widehat{\mathbb{C}}$ with a single essential singularity is called general meromorphic if it has at least two poles or exactly one pole that is not an omitted value. We choose the essential singularity to be at $\infty$.  The Fatou set of $f$, denoted by $\mathcal{F}(f)$, is the set of all points in a neighborhood of which $\{f^n \}_{n>0}$ is defined and normal. Its complement in $\widehat{\mathbb{C}}$ is the Julia set of $f$ and it is denoted by  $\mathcal{J}(f)$.  For general meromorphic maps, the   backward orbit of $\infty$, $\{z: f^n(z)=\infty~\mbox {for~some~natural~number}~n\}$ is an infinite set and its closure turns out to be the Julia set of $ f$.  By the dynamics of a function, we mean its Fatou set and the Julia set.

%

\par
A maximally connected subset of the Fatou set is called a Fatou component.
For a given $n$, $U_n$ denotes the Fatou component containing $f^n(U)$. A Fatou component $U$ is said to be $p$-periodic if $p$ is the smallest natural number such that $U_p = U$. If $p=1$ then $U$ is called invariant. An invariant Fatou component $U$ is called completely invariant if $f^{-1}(U) \subseteq U$.  A periodic Fatou component can be an attracting domain, a parabolic domain, a rotational domain (a Herman ring or a Siegel disc) or a Baker domain. For a point $z_0$ if $p$ is the smallest natural number such that $f^p (z_0)=z_0$ then $z_0$ is called a $p-$periodic point of $f$.  A $1-$periodic point is called a fixed point.  An important number associated  with $z_0$ is its multiplier $\alpha_{z_0} =(f^p)'(z_0)$. The $p-$periodic point $z_0$ is called attracting, indifferent or repelling if $|\alpha_{z_0}|<1, =1$ or $>1$ respectively. An indifferent $p-$periodic point is called parabolic if $\alpha_{z_0}=e^{2 \pi i\beta}$ for some rational number $\beta$. A $p-$periodic attracting domain contains an attracting $p-$periodic point whereas  the boundary of a $p$-periodic parabolic domain contains a  parabolic $p-$periodic point. Similarly, a Siegel disc always contains a non-parabolic indifferent periodic point.  A $p-$periodic Fatou component $U$ is called a Baker domain if for some $U_k$,  $f^{np}(z)\rightarrow \infty$ uniformly on every compact subset of $U_k$.
A Fatou component $U$ is called wandering if $U_m\cap U_n=\emptyset$ for $m\neq n$.  Further details can be found in~\cite{milnor}.
\par 
The Newton method of $exp(-\int_{0}^z \frac{du}{i+\tan u})$ is the map $i+z +\tan z$, and it is reported in ~\cite{fagella2019,access} that this map has an invariant Baker domain  but has no wandering domain. 
It is proved  in ~\cite{access} that the upper half-plane is an invariant Baker domain for $z+\tan z$ and  the positive imaginary axis is an invariant, but not a strongly invariant  access to $\infty$. An access from a simply connected Fatou component $U$ to one of its boundary points $a$ is a homotopic class of curves in $U$ tending to $a$.  An  access is strongly invariant if it contains the image of each curve in it, in some way (for definition see Section 2). Rempe-Gillen and Sixsmith have recently shown that for $f(z)=z+\tan z$,
there are infinitely many disjoint simply connected domains $\{U_n\}_{n \geq1}$ such that $f^{-1}(U_n)$ is connected for all $n$~ \cite{vanilla}.
This gives a positive answer to a question raised by Eremenko:  Does there exist a non-constant meromorphic function having three disjoint simply-connected regions each with connected
pre-image? The above mentioned functions are two particular members of the one parameter family given by 
$$f_\lambda(z)=\lambda+z+\tan z~\mbox{for }~ \lambda \in \mathbb{C}.$$
\par  This article undertakes a systematic study of the Fatou set and the Julia set of  $f_\lambda$  for most of the values of $\lambda$.

\par
 A point $z$ is called a critical point of $f$ if $f'(z)=0$ and the image of a critical point  is known as a critical value of the function. A point $a\in \widehat{\mathbb{C}}$ is called an asymptotic value of $f$ if there exists a curve $\eta:[0,\infty)\rightarrow \mathbb{C}$ with $\lim_{t\rightarrow \infty} \eta(t)=\infty$ such that $ \lim_{t\rightarrow \infty} f(\eta(t))=a$. A subtle situation arises when the point at $\infty$ is an asymptotic value. The set of all the  singular values of $f$, denoted by  $S_f$ consists of  all the critical values,  asymptotic values and their limit points.  It is important to note that at every point of $S_f$, at least one branch of $f^{-1}$ fails to be defined. The union of the forward orbits of all the singular values is called the post-singular set of $f$. It is denoted by $P(f)$. More precisely, $$P(f)=(\cup_{s\in S_f}\cup_{n\geq0}f^n(s))\cap \mathbb{C}.$$ 

%
%
\par
 Most of the research on the dynamics of  general transcendental maps have been focussed on those with a bounded set of singular values; the set of all such functions is well-known as the Eremenko-Lyubich class. A Baker domain $U$ is special in the sense that  the essential singularity $\infty$ is always a limit function of $\{f^n\}_{n>0}$ on $U$. Every limit function of $\{f^n\}_{n>0}$ on a wandering domain is always constant and the set of all such limits can  be an infinte and unbounded set~\cite{bk2}. The Fatou set of  functions having only finitely many singular values cannot contain any Baker domain or any wandering domain. In order to have a Baker domain or a wandering domain, it is necessary for a map in the Eremenko-Lyubich class to have infinitely many singular values.   Several results on the relation of these types of Fatou components with the postsingular set are obtained in~\cite{fagella2019} though a complete understanding is yet to be arrived at. Some other aspects of dynamics of functions in the Eremenko-Lyubich class have also been investigated and a number of tools are developped.  However, the maps outside this class i.e., those with an unbounded set of singular values mostly remain unexplored. One of the motivations for taking up $f_\lambda(z)=\lambda+z+\tan z $ is that it is one such map. For suitable values of $\lambda$, the existence of Baker domain and wandering domain for $f_\lambda$ is established in this article.  

\par
The study of the dynamics of specific functions have been immensely useful,  not only for predicting results for a class of functions containing them but also often provides clues for their proofs. 
The first  general transcendental meromorphic map subjected to a systematic investigation from a dynamical point of view is probably  $z \mapsto \lambda\tan z$ for $\lambda \in \mathbb{C}$, which has only two singular values (in fact asymptotic values)~ \cite{keen-kot}. Later on, Sajid and Kapoor undertook the study of other maps including some with infinitely many singular values, namely $\lambda \frac{\sinh^2 z}{z^4}$ and $\lambda \frac{\sinh z}{z^2}$ ~\cite{sajid3, sajid4}. However, all these maps are in the Eremenko-Lyubich class. Nayak and Prasad investigated some meromorphic maps with an unbounded  set of singular values, namely $z \mapsto \lambda\frac{z^m}{\sinh^mz}$ for real $\lambda$ and the non-existence of Baker domain and wandering domain is established among other results in~\cite{nayakmgp} . 


\par
  
 A meromorphic map $f$ is said to be topologically hyperbolic if 
 $$d( {P(f)}, \mathcal{J}(f)\cap \mathbb{C})>0,$$ where $d$ is the Euclidean distance between two sets. For two subsets $A,B$ of $\mathbb{C}$, $d(A,B)$ is defined as the $\inf_{ a\in A, b \in B} |a-b|$.  The point at $\infty$ is allowed to be in $S(f)$ for topologically hyperbolic maps. All the finite points of the Julia set are uniformly away from the post-singular set.
 This article deals with  $f_\lambda$ that are topologically hyperbolic.
\par 
For real $\lambda$,  the Fatou set of $f_\lambda$ is the union of two completely invariant Baker domains. To see it, note that   $\Im(f_\lambda(z))>0$ (or $<0$) if and only if $\Im(z)>0$ (or $<0$ respectively)  for all $\lambda \in \mathbb{R}$. Therefore, the upper half-plane and the lower half-plane  are the two completely invariant Fatou components of $f_\lambda$,  by the Fundamental Normality Test (Lemma~\ref{FNT}). Since all the fixed points of $f_\lambda$ are real and repelling,  none of these Fatou components is  an attracting domain  or a parabolic domain. A completely invariant Fatou component cannot be a rotational domain and this gives that both the Fatou components are Baker domains.
Clearly, the extended real line $\mathbb{R} \cup \{\infty\}$ is the Julia set.  
\par

 The functions $f_\lambda$ and $f_{-\lambda}$ are conformally conjugate via $z \mapsto -z$, i.e.,$-f_{-\lambda}(-z)=-(-\lambda-z-\tan(z))=f_{\lambda}(z)$. This means that $-f_{-\lambda}^n (-z)=f_{\lambda}^n (z)$ for all $n$ and the dynamical behaviour (the Fatou and the Julia set) of $f_\lambda$ is essentially the same as that of $f_{-\lambda}$.  
In view of this, now onwards, we assume $\Im(\lambda) > 0$.

The following is a  straight forward observation and forms the basis of subsequent results. 

\begin{thm}\label{cifcupperhalf}
For  $\Im(\lambda) > 0,$ there is a   completely invariant Baker domain $B_\lambda$ of $f_\lambda$  containing the upper half-plane.

\end{thm}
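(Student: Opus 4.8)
The plan is to obtain $B_\lambda$ as the Fatou component containing the upper half plane $\mathbb{H}=\{z:\Im(z)>0\}$, and then to verify in turn that this component is a Baker domain and that it is completely invariant. First I would check that $f_\lambda$ maps $\mathbb{H}$ into itself. From $\tan(x+iy)=\dfrac{\sin 2x+i\sinh 2y}{\cos 2x+\cosh 2y}$ one sees that the denominator is at least $\cosh 2y-1\ge 0$ and vanishes only at the (real) poles, while $\Im\tan(x+iy)=\dfrac{\sinh 2y}{\cos 2x+\cosh 2y}$ has the sign of $y$. Hence on $\mathbb{H}$ the map $f_\lambda$ is holomorphic and
\[
\Im f_\lambda(z)=\Im(\lambda)+\Im(z)+\Im\tan z>\Im(z)+\Im(\lambda)>\Im(z),
\]
so $f_\lambda(\mathbb{H})\subseteq\mathbb{H}$. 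Since $\{f_\lambda^n\}_{n>0}$ then omits the lower half plane on $\mathbb{H}$, the Fundamental Normality Test (Lemma~\ref{FNT}) gives $\mathbb{H}\subseteq\ft$; I denote by $B_\lambda$ the Fatou component containing $\mathbb{H}$.

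Next I would iterate the displayed inequality: the forward orbit of any point of $\mathbb{H}$ stays in $\mathbb{H}$, so $\Im f_\lambda^{n}(z)\ge\Im(z)+n\,\Im(\lambda)\to\infty$ for every $z\in\mathbb{H}$, and hence $f_\lambda^{n}\to\infty$ pointwise, thus locally uniformly, on $\mathbb{H}$. As $\{f_\lambda^{n}\}$ is normal on $B_\lambda$ and a limit function equal to $\infty$ on the open set $\mathbb{H}$ cannot be a finite holomorphic function, the only limit function on $B_\lambda$ is $\infty$; so $f_\lambda^{n}\to\infty$ locally uniformly on $B_\lambda$. Moreover $f_\lambda(B_\lambda)$ is connected, lies in $\ft$, and contains $f_\lambda(\mathbb{H})\subseteq\mathbb{H}$, so $f_\lambda(B_\lambda)\subseteq B_\lambda$; hence $B_\lambda$ is invariant, and therefore an invariant Baker domain.

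For complete invariance I would prove $\jl=\partial B_\lambda$; since $\partial B_\lambda\subseteq\jl$ always, it suffices to show $\jl\subseteq\overline{B_\lambda}$. Because $f_\lambda$ is general meromorphic, $\jl=\overline{\bigcup_{n\ge 1}f_\lambda^{-n}(\infty)}$, so it is enough that $f_\lambda^{-n}(\infty)\subseteq\overline{B_\lambda}$ for every $n$. The poles of $f_\lambda$ are the real numbers $\tfrac{\pi}{2}+k\pi$, which lie in $\overline{\mathbb{H}}\subseteq\overline{B_\lambda}$; and for real $x$ that is not a pole one has $\Im f_\lambda(x)=\Im(\lambda)>0$, so $x\in f_\lambda^{-1}(\mathbb{H})\subseteq\ft$ together with $x\in\overline{\mathbb{H}}$ forces $\R\setminus f_\lambda^{-1}(\infty)\subseteq B_\lambda$. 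The key point is to show that $f_\lambda^{-1}(\mathbb{H})$ is connected — equivalently $f_\lambda^{-1}(\mathbb{H})\subseteq B_\lambda$, since it is an open subset of $\ft$ containing $\mathbb{H}$. Granting this, one argues by induction on $n$: if $q\in f_\lambda^{-(n+1)}(\infty)$ then $q':=f_\lambda(q)\in f_\lambda^{-n}(\infty)$ is finite, $f_\lambda$ is holomorphic and open at $q$, and pulling back through $f_\lambda$ a sequence in $B_\lambda$ tending to $q'$ yields points near $q$ lying in $f_\lambda^{-1}(B_\lambda)$, which the connectedness claim together with the base case places in $B_\lambda$; hence $q\in\overline{B_\lambda}$. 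Once $\jl=\partial B_\lambda$ is established, complete invariance of $B_\lambda$ follows from the standard fact that an invariant Fatou component whose boundary is the entire Julia set is completely invariant.

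The step I expect to be the main obstacle is the claim that $f_\lambda^{-1}(\mathbb{H})=\{z:\Im(\lambda+z+\tan z)>0\}$ is connected. The delicate region is the strip just below the real axis between consecutive poles: the function $s\mapsto s+\dfrac{\sinh 2s}{\cos 2x+\cosh 2s}$, which is the imaginary part of $(x+is)+\tan(x+is)$, fails to be monotone in $s$ exactly when $\cos 2x$ is close to $-1$, i.e.\ near a pole, so one must rule out $f_\lambda^{-1}(\mathbb{H})$ pinching off a separate piece trapped between two poles. In effect this is the place where one excludes Fatou components in the lower half plane that map into $B_\lambda$; the remaining steps are routine.
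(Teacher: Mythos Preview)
Your argument that $\mathbb{H}$ is forward invariant, that $\Im f_\lambda^{n}\to+\infty$ there, and hence that the Fatou component $B_\lambda\supseteq\mathbb{H}$ is an invariant Baker domain, is correct and is exactly what the paper does.

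The genuine gap is in the backward--invariance step. You reduce everything to the connectedness of $f_\lambda^{-1}(\mathbb{H})$ and then, by your own admission, do not prove it; your worry about the strip just below $\mathbb{R}$ near a pole is well founded (for $x$ with $\cos 2x$ close to $-1$ the function $y\mapsto y+\Im\tan(x+iy)$ is \emph{not} monotone on $(-\infty,0)$, so vertical slices of $f_\lambda^{-1}(\mathbb{H})$ can be disconnected and a direct attack is unpleasant). Your detour through ``$\mathcal{J}=\partial B_\lambda$'' also hides a circularity: in the inductive step the pulled--back points $w_k$ lie only in $f_\lambda^{-1}(B_\lambda)$, not in $f_\lambda^{-1}(\mathbb{H})$, so to place them in $B_\lambda$ you already need $f_\lambda^{-1}(B_\lambda)\subseteq B_\lambda$, which is precisely backward invariance. (One can rescue this with Herring's theorem, but then the detour is superfluous.)

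The paper avoids the obstacle entirely by a different, much shorter device. Let $B_{-1}$ be any component of $f_\lambda^{-1}(B_\lambda)$. By Herring's result, $B_\lambda\setminus f_\lambda(B_{-1})$ has at most two points, so $f_\lambda(B_{-1})$ contains a horizontal segment $\ell\subset\{w:\Im w=\Im\lambda\}\subset\mathbb{H}$. Now the key observation: if $f_\lambda(z)=w$ with $\Im w=\Im\lambda$, then $\Im z+\Im\tan z=0$; since $\Im\tan z$ has the same sign as $\Im z$, this forces $z\in\mathbb{R}$. Thus $f_\lambda^{-1}(\ell)\subset\mathbb{R}\setminus\{\text{poles}\}$, and you already noted $\mathbb{R}\setminus\{\text{poles}\}\subset B_\lambda$. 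Hence $B_{-1}\cap B_\lambda\neq\emptyset$, so $B_{-1}=B_\lambda$. Replacing the whole half--plane $\mathbb{H}$ by the single line $\{\Im w=\Im\lambda\}$ is what makes the preimage computation trivial and sidesteps exactly the near--pole difficulty you identified.
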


We call the completely invariant Baker domain $B_\lambda$ of $f_\lambda,$ as the   \textit{ primary Fatou component} and denote it by $B$ whenever  $\lambda $ is understood.  Let us call a Fatou component \textit{ non-primary} if it is different from $B$.  Before looking into the non-primary Fatou components, we make  few remarks.
\begin{rem}\label{cifc-basic}
	\begin{enumerate}
		\item Since the Julia set is the boundary of every completely invariant Fatou component,   $\mathcal{J}(f_\lambda)=\partial B$.
		\item  Every Fatou component of $f_\lambda$ different from $B$  is simply connected. In particular, there is no Herman ring in the Fatou set of $f_\lambda$.
		\item All the critical points of $f_\lambda$ with positive imaginary part are in $B$ and it follows from the proof of Theorem \ref{cifcupperhalf} that 
		$d(\overline{P(f)}\cap  H^+, \mathcal{J}(f_\lambda)\cap \mathbb{C})>\sinh^{-1}1>0$ where $H^+=\{z: \Im(z) >0\}$.
	\end{enumerate}
\end{rem}
 
The function $f_\lambda$ has infinitely many fixed points for each $\lambda \neq  i$. These are the solutions of $ \tan z=-\lambda$. But the multiplier of each fixed point is $2+\lambda^2$ leading to some amount of advantage.
 First we consider  $|2+\lambda^2|<1$. The set of all such values of $\lambda$  in the upper half-plane is a bounded simply connected domain. The following theorem demonstrates that non-primary Fatou components do exist and it describes all of them.
  
\begin{thm}\label{attractingdomaincomplete}
Let $\mid2+\lambda^2\mid<1$. Then,
\begin{enumerate}

\item there are infinitely many invariant attracting domains of $f_\lambda$ and each such attracting domain $U$ is unbounded in such a way that $\{\Im(z): z \in U\}$ is unbounded and for every $z_0\in U$,  $\{\Re(z): z \in U ~\mbox{and}~\Im(z)>\Im(z_0)\}$ is bounded.  Further, there is exactly one invariant access from this attracting domain to $\infty$. 

\item   $f_\lambda$ does not have any other periodic Fatou component or any wandering domain.

\end{enumerate}
In other words, the Fatou set of $f_\lambda$ is the union of the primary Fatou component, all the invariant attracting domains and their pre-images.
\end{thm}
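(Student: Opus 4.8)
The plan is to read off the fixed points and their multipliers, produce the attracting domains as their immediate basins, use the quasi-periodicity of $f_\lambda$ to multiply them, and finally pin down the precise shape of each basin through an invariant absorbing region built from the asymptotics of $\tan$. I would first solve $f_\lambda(z)=z$, i.e. $\tan z=-\lambda$. Writing $w=e^{2iz}$ this becomes $w=\frac{1-i\lambda}{1+i\lambda}$, whose modulus exceeds $1$ precisely because $\Im(\lambda)>0$; hence every solution $z_k=\xi+k\pi$ ($k\in\Z$) satisfies $\Im(z_k)=\Im(\xi)<0$, so all fixed points lie in the lower half plane and in particular none lies in $B$. Since $f_\lambda'(z)=2+\tan^2 z$, at each fixed point the multiplier equals $2+\lambda^2$, so the hypothesis $|2+\lambda^2|<1$ makes every $z_k$ attracting, and each lies in an invariant attracting domain $U_k$, namely its immediate basin.

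Next I would exploit the identity $f_\lambda(z+\pi)=f_\lambda(z)+\pi$: the translation $T(z)=z+\pi$ commutes with $f_\lambda$, so it permutes Fatou components and carries the immediate basin of $z_k$ onto that of $z_k+\pi=z_{k+1}$. Thus $U_k=U_0+k\pi$ for all $k$. To conclude that the $U_k$ are pairwise distinct, and hence that there are infinitely many of them, it suffices to show that $\{\Re(z):z\in U_0\}$ is bounded, for then the horizontal translates $U_0+k\pi$ are disjoint and cannot coincide; this boundedness is delivered by the next step.

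The heart of the argument, and the step I expect to be the main obstacle, is the geometric description of $U_0$. The key analytic input is that $\tan z\to -i$ as $\Im(z)\to-\infty$, so that $f_\lambda(z)=z+(\lambda-i)+o(1)$ deep in the lower half plane; since $|2+\lambda^2|<1$ forces $\Im(\lambda)>1$, the drift vector $\lambda-i$ points strictly upward. Using this I would construct, for each $k$, an invariant absorbing region, a vertical half-strip (funnel) of bounded width anchored near the pole column $\Re(z)=\tfrac{\pi}{2}+k\pi$ and reaching down to $-i\infty$, on which $f_\lambda^n\to z_k$ uniformly, so that the region lies in $U_k$. This simultaneously shows that $\{\Im(z):z\in U_k\}$ is unbounded while confining $U_k$ in the real direction, yielding the boundedness of $\{\Re(z):z\in U_k\}$ needed above. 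The unique unbounded end of this strip-like region supplies exactly one access to $\infty$, the downward tract; its invariance follows because the upward drift maps the bottom of the funnel back into itself, preserving the homotopy class in the strong sense of Section~2. The delicate points are controlling the horizontal extent of $U_k$ sharply enough to bound $\Re$ and verifying that the curves separating consecutive funnels lie in $\jl=\partial B$; the behaviour of $f_\lambda$ on the lines $\Re(z)=\tfrac{\pi}{2}+k\pi$, which it sends to $\Re(z)=\Re(\lambda)+\tfrac{\pi}{2}+k\pi$, is the natural tool here.

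For part (2) I would invoke topological hyperbolicity, $P(f_\lambda)\cap\jl\cap\C=\emptyset$. The finite singular values are the critical values arising from $\tan z=\pm i\sqrt2$; those with positive imaginary part lie in $B$ by Remark~\ref{cifc-basic}(3) and escape with $B$, while those with negative imaginary part are captured by the funnels and converge to the $z_k$. Since every attracting or parabolic cycle must capture a singular value, and the boundary of any Siegel disk lies in the postsingular closure, none of these can occur off $B$ and the $U_k$: Herman rings are excluded by Remark~\ref{cifc-basic}(2), Siegel disks and parabolic domains by the disjointness of the finite postsingular set from $\jl$, and no extra attracting cycle or Baker domain survives once all finite singular orbits are accounted for. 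The absence of wandering domains is the subtler assertion, as $f_\lambda$ is \emph{not} in the Eremenko--Lyubich class; I would derive it from topological hyperbolicity together with the simple connectivity of every non-primary component and the fact that its forward orbit is eventually trapped in $B$ or in some $U_k$, so no orbit of components can be infinite. Assembling these facts shows that $\mathcal{F}(f_\lambda)$ is exactly $B$ together with all the $U_k$ and their preimages.
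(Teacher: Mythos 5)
Your overall architecture (fixed points with common multiplier $2+\lambda^2$, the $\pi$-translation symmetry, singular values for part (2)) matches the paper's, but the step you yourself flagged as the main obstacle, the funnel, genuinely fails, and part (2) has a second gap. Deep in the lower half plane one has $\tan z=-i+O(e^{2\Im(z)})$ uniformly in $\Re(z)$, so $f_\lambda(z)=z+(\lambda-i)+O(e^{2\Im(z)})$: each iterate moves a point horizontally by essentially $\Re(\lambda)$, while the vertical rise per step is only $\Im(\lambda)-1<\sqrt{3}-1$. An orbit starting at depth $-T$ therefore needs about $T/(\Im(\lambda)-1)$ iterates to reach the capture zone near $z_k$, accumulating horizontal displacement of order $T\,\Re(\lambda)/(\Im(\lambda)-1)$. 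Hence for any $\lambda$ in the lobe with $\Re(\lambda)\neq 0$ (for instance $\lambda=0.1+i\frac{\pi}{2}$, the parameter of Figure~\ref{attr-conjulia}(a)), no vertical half-strip of bounded width is forward invariant, and $f_\lambda^n\to z_k$ uniformly on it is false: points starting sufficiently deep exit the strip sideways and are attracted to $z_{k+m}$ with $m$ growing with the depth. An honest absorbing region reaching $-i\infty$ would have to be slanted, and a slanted tube does not confine $\Re$; so the funnel cannot simultaneously certify invariance, unboundedness of $\Im$, and boundedness of $\Re$, and with it your derivations of the distinctness of the $U_k$ and of the access statement collapse (they all route through the funnel). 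The paper's own Theorem~\ref{wandeing domain} illustrates exactly this drift phenomenon: for $\Re(\lambda)=k\pi$ the analogous vertical strips satisfy $f_\lambda(R_m)\subset R_{m+k}$ and produce wandering, not invariant, components. The paper sidesteps any absorbing region: it first establishes topological hyperbolicity, then supposes $U$ horizontally spread, extracts via Lemma~\ref{proper}(1) an invariant Baker domain containing a lower half plane, and gets a contradiction between Lemma~\ref{wandeingarbitrarylagredisc} (which forces $\Im(f_\lambda^n)\to-\infty$ there) and the drift estimate $\Im(\tan z)>-\Im(\lambda)$ (which forces $\Im(f_\lambda^n)$ to increase); properness and Riemann--Hurwitz (Lemma~\ref{RH}) then give degree $2$, and Theorem~\ref{accesstoinfty} plus a separation argument using $\partial U\subset\partial B$ yield exactly one invariant access. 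Note also that exhibiting one access at the bottom of a funnel would not exclude further accesses: the reduction from ``at most three'' to ``exactly one'' uses that two curves to $\infty$ in $U$ would trap points of $\jl=\partial B$ on both sides. (Distinctness of the $U_k$, incidentally, never needed bounded real parts: two distinct attracting fixed points cannot lie in one invariant component, since $f_\lambda^n$ would then have two distinct constant limits there.)

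In part (2), your exclusion of Baker and wandering domains is not a proof. Baker domains and wandering domains need not contain singular values, so ``no Baker domain survives once all finite singular orbits are accounted for'' is a false principle, and ``its forward orbit is eventually trapped in $B$ or in some $U_k$'' assumes the conclusion: if $f_\lambda^n(W)$ lies in $B$ or in $U_k$, then $W=B$ (complete invariance) or $W$ is a preimage of $U_k$ and is not wandering, which is precisely what needs to be shown for \emph{every} component. The missing idea is Lemma~\ref{wandeingarbitrarylagredisc}: for a topologically hyperbolic map, a component whose forward orbit avoids $P(f_\lambda)$ eventually contains disks of radius greater than $\pi$, hence, by the $\pi$-invariance of the Fatou set (Lemma~\ref{symmetry}), full horizontal lines; such lines, together with $\{\infty\}$, would separate the connected completely invariant component $B$ (this is Lemma~\ref{nowanderingTHM}, killing wandering domains), and in the Baker case they force the domain to contain a lower half plane, contradicting that the invariant attracting basins contain points with $\Im(z)\to-\infty$. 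Nothing in your sketch substitutes for this mechanism, whereas your use of Lemma~\ref{singularvalues} against extra attracting cycles, parabolic domains, Siegel disks, and Herman rings does match the paper.
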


The images of attracting domains of $f_\lambda$ are shown in blue (see Figure \ref{attr-conjulia}). The primary Fatou components are indicated in yellow for $\lambda=0.1+i\frac{\pi}{2}$ and $-0.1+\frac{\pi}{2}$ in Figure \ref{attr-conjulia}(a) and Figure \ref{attr-conjulia}(c) respectively. It is shown in red for $\lambda=1.5i$ in Figure \ref{attr-conjulia}(b). The black horizontal line is the real axis in these figures.

\begin{figure}[H]
	\centering
	\subfloat[The attracting domains of $f_{0.1+i\frac{\pi}{2}}$ seen in blue.]
	{\includegraphics[width=1.86in,height=2in]{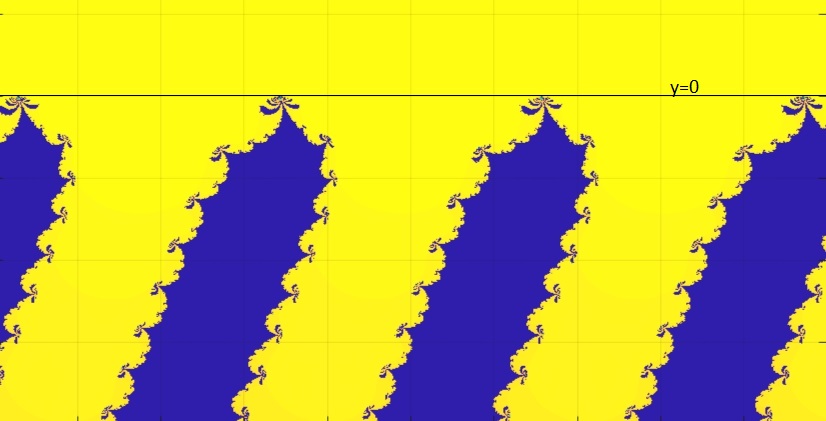}}
	\hspace{0.01in}
	\subfloat[The attracting domains of $f_{1.5 i}$ seen in blue.]
	{\includegraphics[width=1.86in,height=2in]{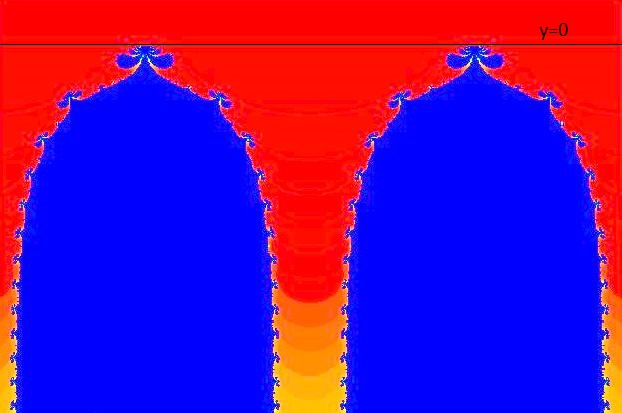}}
	\hspace{0.01in}
	\subfloat[The attracting domains of $f_{-0.1+i\frac{\pi}{2}}$ seen in blue.]
	{\includegraphics[width=1.86in,height=2in]{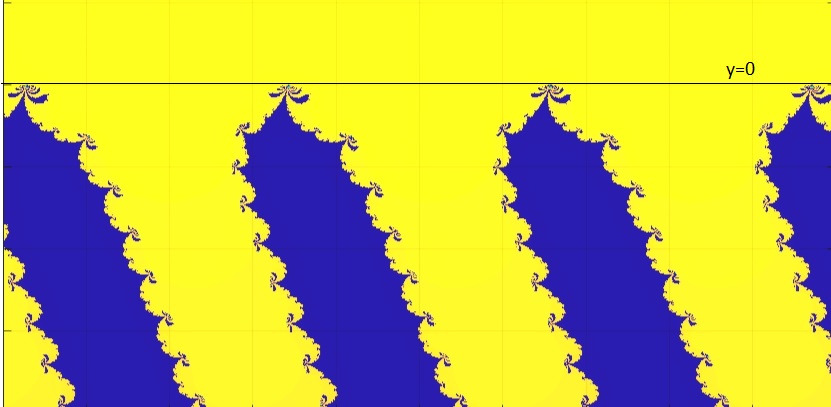}}
	\caption{Fatou sets of $f_\lambda$}	\label{attr-conjulia}\end{figure}

\begin{rem}
The boundary of the set $A=\{\lambda:\Im(\lambda)>0~\mbox{and }~ |2+\lambda^2|<1\}$ contains $i$ and $\sqrt{3}i$ and for every $\lambda \in A, 1 <  \Im(\lambda) < \sqrt{3}$. In particular, if $0< \Im(\lambda) <1$ or $\Im(\lambda)\geq \sqrt{2}+ \sinh^{-1} 1 > \sqrt{3}$ then all the fixed points of $f_\lambda$ are repelling. 
\label{lambda}
\end{rem}
Note that for a large set of parameters $\lambda$ (i.e., $|2+\lambda^2|>1$), all the fixed points of $f_\lambda$ are repelling and that calls for further effort to determine the dynamics. However, the situation is relatively simple if the imaginary part of such a parameter is either sufficiently large or sufficiently small. The following theorem makes it precise. 
\begin{thm}\label{repelling-one}
\begin{enumerate}
\item 
For $0<\Im(\lambda)<1$,  the Fatou set of $f_\lambda$   contains an invariant Baker domiain $\tilde{B}$ different from $B$. Further, if $\Re(\lambda) =k\pi$ for some integer $k$ then $\tilde{B}$ is the only non-primary Fatou component and the Julia set of $f_\lambda$ is connected.

\item 
For $\Im(\lambda)> \sqrt{2}+ \sinh^{-1}1$,  the primary Fatou component is the only Fatou component and the Julia set is not connected.
\end{enumerate}
\end{thm}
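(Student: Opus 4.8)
\medskip
\noindent\textit{Proof proposal.} The plan is to compute all the singular values of $f_\lambda$ explicitly, read off their position relative to the invariant half-planes from the hypotheses on $\Im(\lambda)$, and then eliminate every Fatou component other than the ones asserted. To locate the singular values, note that $f_\lambda^{\p}(z)=2+\tan^{2}z$, so the critical points are the solutions of $\tan z=\pm i\sqrt{2}$, namely $z=\tfrac{\pi}{2}+k\pi\pm i\sinh^{-1}1$ ($k\in\Z$), with corresponding critical values
\[
v_k^{\pm}=\lambda+\tfrac{\pi}{2}+k\pi\pm i\bigl(\sqrt{2}+\sinh^{-1}1\bigr),\qquad k\in\Z .
\]
A short estimate shows that the only asymptotic value is $\infty$ and that the $v_k^{\pm}$ have no finite accumulation point, so $S_{f_\lambda}=\{v_k^{\pm}:k\in\Z\}\cup\{\infty\}$. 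Crucially $\Im(v_k^{+})=\Im(\lambda)+\sqrt{2}+\sinh^{-1}1>0$ always, while $\Im(v_k^{-})=\Im(\lambda)-\bigl(\sqrt{2}+\sinh^{-1}1\bigr)$; this already explains the threshold in part (2).

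For part (1) I would first produce $\tilde B$ by a half-plane estimate. From $\Im(\tan(x+iy))=\frac{\sinh 2y}{\cos 2x+\cosh 2y}$ one gets $\Im(\tan(x+iy))\le\tanh y$ for $y<0$; so, with $M_{0}=\tanh^{-1}(\Im\lambda)$ (finite since $0<\Im\lambda<1$), whenever $\Im(z)<-M_{0}$ we have $\Im(f_\lambda(z))\le\Im(z)+\Im(\lambda)+\tanh(\Im z)<\Im(z)$. Hence the half-plane $\{\Im z<-M_{0}\}$ is forward invariant with $\Im(f_\lambda^{n})\to-\infty$, so by Montel's theorem it lies in $\ft$, inside a component $\tilde B$ which is therefore a Baker domain; and $\tilde B\neq B$ because orbits in $B\supseteq\mathbb{H}$ (the upper half-plane) escape through the upper half-plane whereas those in $\tilde B$ escape downward, so $\jl$ separates the two (alternatively one compares accesses to $\infty$ or uses that the fixed points, all repelling by Remark~\ref{lambda}, have negative imaginary part). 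When moreover $\Re(\lambda)=k\pi$, conjugating by $z\mapsto z-k\pi$ (legitimate since $f_\lambda(z+\pi)=f_\lambda(z)+\pi$) reduces matters to $\lambda=i\lambda_{2}$, for which $z\mapsto-\bar z$ is a further conjugacy; on each vertical line $\Re(z)=\tfrac{\pi}{2}+m\pi$ one computes $f_\lambda\bigl(\tfrac{\pi}{2}+m\pi+iy\bigr)=\tfrac{\pi}{2}+m\pi+i\bigl(\lambda_{2}+y+\coth y\bigr)$, and since $\lambda_{2}<1$ one has $\lambda_{2}+\coth y<0$ for $y<0$, so the orbit of any $y_{0}<0$ under $y\mapsto\lambda_{2}+y+\coth y$ strictly decreases and, having no fixed point, tends to $-\infty$; as the lower critical values $v_m^{-}$ lie on these lines below $\R$, their orbits lie in $\tilde B$. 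Thus $P(f_\lambda)\cap\C\subseteq B\cup\tilde B$. Now I eliminate the remaining components: no attracting or parabolic cycle, since each such immediate basin carries a singular value and would hence equal $B$ or $\tilde B$, which are Baker domains; no Herman ring by Remark~\ref{cifc-basic}, and no Siegel disk or Cremer point since the postsingular set does not accumulate on $\jl$ within $\C$; and no wandering domain. This leaves $\ft=B\cup\tilde B$; then $\tilde B$ is completely invariant (its preimages cannot enter the completely invariant component $B$), so $\jl=\partial\tilde B$, which is connected, being the boundary of a simply connected domain of $\hC$.

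For part (2), when $\Im(\lambda)>\sqrt{2}+\sinh^{-1}1$ we get $\Im(v_k^{-})>0$, so every finite singular value lies in $\mathbb{H}\subseteq B$ and hence $P(f_\lambda)\cap\C\subseteq B$ by complete invariance of $B$. As in part (1) this excludes attracting, parabolic and rotational cycles; for wandering domains and non-primary Baker domains the point is that any such component $W$ has $W_{n}\cap\mathbb{H}=\emptyset$ for all $n$ (otherwise $W_{n}=B$, forcing $W=B$), hence $W_{n}\subseteq\{\Im z<0\}$ for every $n$; but for $\Im(\lambda)>1$ a point in the lower half-plane satisfies $\Im(f_\lambda(z))-\Im(z)=\Im(\lambda)+\Im(\tan z)$, which away from the poles is bounded below by a positive constant, while an excursion near a pole throws the orbit close to $\infty$ with very negative imaginary part, from which it again drifts monotonically upward — so $W_{n}$ cannot stay in $\{\Im z<0\}$. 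Hence $\ft=B$. Finally $\jl=\hC\setminus B$ is disconnected: picking $c>0$ with $c+\coth c<\Im(\lambda)$ — possible exactly because $\min_{c>0}(c+\coth c)=\sqrt{2}+\sinh^{-1}1$, attained at $c=\sinh^{-1}1$ — the line $\{\Im z=-c\}$ satisfies $\Im(f_\lambda(z))\ge-c+\Im(\lambda)-\coth c>0$, so it maps into $\mathbb{H}\subseteq B$ and therefore lies in $B$; combining this with the facts that $\R$ minus the poles lies in $B$ (since $\Im(f_\lambda(x))=\Im(\lambda)>0$ for real $x$) and that the prepoles accumulating on a pole $p_{k}$ all lie within distance $O\bigl(1/\Im\lambda\bigr)$ of $p_{k}$ and below it, one encloses the part of $\jl$ near $p_{k}$ by a Jordan curve contained in $B$ and missing every other pole and $\infty$, so $B$ is not simply connected.

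The principal obstacle throughout is the elimination of wandering domains and of non-primary Baker domains. Since $f_\lambda$ lies outside the Eremenko--Lyubich class, the usual ``no wandering or Baker domains for hyperbolic maps'' machinery does not apply, and one must instead follow orbits directly: the position of $P(f_\lambda)$ confines such a component to the lower half-plane, after which the drift estimates above — together with a precise description of the cusp regions $\{\Im(\tan z)<-\Im\lambda\}$ near the poles, exactly where an orbit's imaginary part can decrease — must be pushed through even when the orbit makes infinitely many close approaches to the poles. The same analysis near the poles is the delicate ingredient in the separation argument for the disconnectedness of $\jl$ in part (2), and a separation argument (rather than a one-line estimate) is likewise what is needed to see $\tilde B\neq B$ in part (1).
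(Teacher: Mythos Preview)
Your overall plan is sound and several of your computations are cleaner than the paper's, but there is one genuine gap: the elimination of wandering domains and of non-primary Baker domains. You correctly identify this as the ``principal obstacle'', but the drift argument you sketch does not close. In the lower half-plane one has $\Im(f_\lambda(z))-\Im(z)=\Im(\lambda)+\Im(\tan z)$, and near each pole $\Im(\tan z)$ is unbounded below; an orbit can make infinitely many close passes to poles, and nothing in your outline prevents it from losing more imaginary part on each pass than it recovers in between. The paper sidesteps this entirely. Once the finite singular values are placed inside $B$ (part~(2)) or $B\cup\tilde B$ (part~(1)), $f_\lambda$ is topologically hyperbolic, and one invokes the result of Bara\'nski--Fagella--Jarque--Karpi\'nska (the paper's Lemma~\ref{wandeingarbitrarylagredisc}): a Fatou component $U$ with $U_n\cap P(f_\lambda)=\emptyset$ for all $n$ eventually contains discs of any prescribed radius. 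Thus a hypothetical wandering or non-primary Baker domain would contain a disc of radius $\pi$, hence (by $\pi$-periodicity of the Fatou set) a full horizontal line, which separates $B$ from part of its own boundary --- a contradiction. For the Baker domain case in part~(2) the paper uses the same lemma first to force $\Im(f_\lambda^{np}(z))\to-\infty$, after which $\Im(\tan z_n)>-2$ eventually and your drift estimate \emph{does} apply. No tracking of orbits through cusp regions is needed.

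Two smaller points. First, your ``conjugation by $z\mapsto z-k\pi$'' does not reduce $\lambda=k\pi+i\lambda_2$ to $i\lambda_2$: translation by $\pi$ \emph{commutes} with $f_\lambda$, so conjugation by it changes nothing. Just compute directly that $f_\lambda(\tfrac{\pi}{2}+m\pi+iy)=\tfrac{\pi}{2}+(m+k)\pi+i(\lambda_2+y+\coth y)$; your one-line observation that $\lambda_2+\coth y<0$ for $y<0$ (hence the imaginary parts strictly decrease to $-\infty$) then goes through, and is indeed simpler than the paper's analysis of the map $\phi$. Second, for the disconnectedness of $\mathcal{J}(f_\lambda)$ in part~(2), the paper argues that the Julia component through each pole is bounded, using that $f_\lambda$ is injective on Julia pieces in $H^-$. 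Your alternative --- build a loop in $B$ around a single pole --- also works and is more elementary: with $c$ chosen so that $c+\coth c<\Im(\lambda)$, the horizontal line $\{\Im z=-c\}$ maps into $H^+$ and hence lies in $B$; the vertical segments $\{m\pi+iy:-c\le y\le 0\}$ also map into $H^+$ (since $\Im f_\lambda(m\pi+iy)=\Im(\lambda)+y+\tanh y\ge\Im(\lambda)-c-\tanh c>0$); so the rectangle with vertices $m\pi,\,(m{+}1)\pi,\,(m{+}1)\pi-ic,\,m\pi-ic$, with a small detour into $H^+$ over the pole at $m\pi+\tfrac{\pi}{2}$, is a Jordan curve in $B$ enclosing a Julia point. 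Your remarks about prepoles are not needed.
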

The Julia set of $f_\lambda$ for $\lambda={\pi+i(\sqrt{2}+\sinh^{-1}1)}$ is given as the complement of the yellow region-it is disconnected and is given in Figure~\ref{Fatouset-4}(a). The connected Julia set of $f_\lambda$ for $\lambda =\pi+ 0.99i$ is shown as the boundary of the yellow and the green region in Figure~\ref{Fatouset-4}(b).

\begin{figure}[H]
	\centering
	\subfloat[  $ \lambda=\pi+i(\sqrt{2}+\sinh^{-1}1)$]
	{\includegraphics[width=2.85in,height=2.8in]{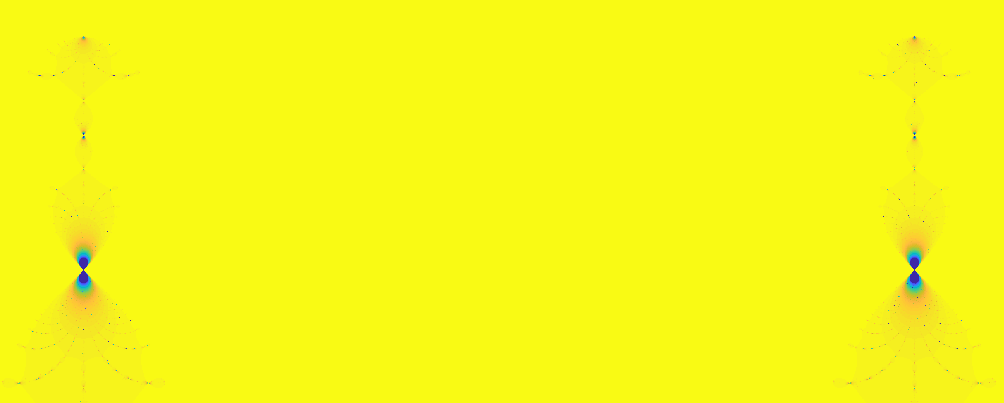}}
	\vspace{.2cm}
	\subfloat[$\lambda =\pi+0.99i$]	{\includegraphics[width=2.75in,height=2.8in]{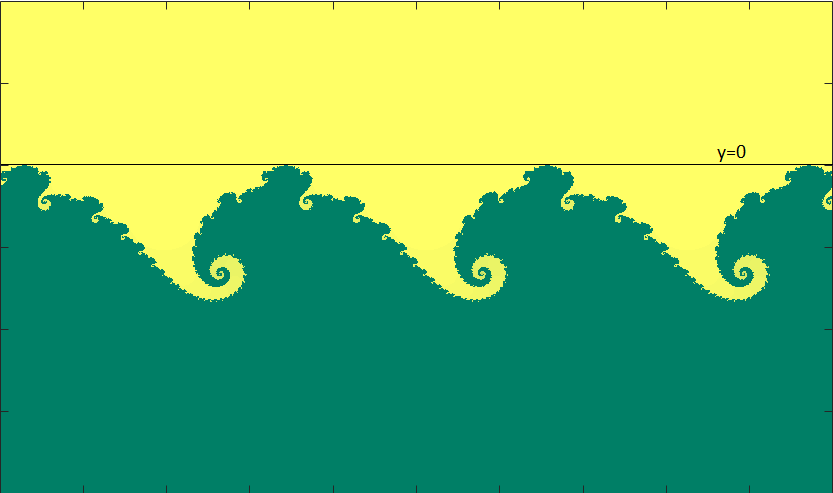}}
	\caption{The Julia sets of $f_\lambda$}
	\label{Fatouset-4}
\end{figure} 

\par
Every limit function  of $\{f^n\}_{n>0}$ on each  wandering domain of $f$ is always constant \cite{zheng}, one of which can be $\infty$.
For a wandering domain $W$, let  $L_{W}$ denote the set  of all limits of $\{f^n\}_{n>0}$ on $W$.  A wandering domain $W$ is called escaping if $L_W =\{\infty \}$. 

The following theorem proves the existence of escaping wandering domains for some values of $\lambda$ with $  \Im(\lambda)=\frac{\pi}{2}$. We say a Fatou component $U$ lands on a Fatou component $V$ if $U_n=V$ for some natural number $n$. The grand orbit of a wandering domain $W$ is the set of all wandering domains landing on $W$ or on one of its iterated forward images. Note that the grand orbit of two Fatou components are either identical or disjoint.
 
\begin{thm}\label{wandeing domain}
For every natural number $k$, there is a $\lambda$ such that  $f_\lambda$   has $k$ many  wandering domains with distinct grand orbits. If $W$ is such a wandering domain then  it has the following properties.

\begin{enumerate}
\item Each $W$ is escaping.

\item There is a two sided sequence of unbounded wandering domains 
$\{W_n \}_{n \in \mathbb{Z}}$ in the grand orbit of $W$ such that  $f_\lambda: W_{n} \to W_{n+1}$
 is a proper map with degree $2$. 
\item If  $W'$ is a wandering domain in the grand orbit of $W$ and  different from all $W_n$s then  $f_{\lambda}$ is one-one on $  W'  $.
\end{enumerate} 
The Fatou set is the union of the primary Fatou component and these $k$ many grand orbits of wandering domains. 
\end{thm}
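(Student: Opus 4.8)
\noindent\emph{Proof proposal.}
The plan is to take $\lambda=k\pi+i\frac{\pi}{2}$ and exploit the relation $f_\lambda(z+\pi)=f_\lambda(z)+\pi$, which is immediate from $\tan(z+\pi)=\tan z$. Under the holomorphic covering $p\colon\C\to\C^{*}$, $p(z)=e^{2iz}$, this relation lets $f_\lambda$ descend to a map $g$ of the cylinder $\C/\pi\Z$; writing $\tan z=-i\frac{e^{2iz}-1}{e^{2iz}+1}$ and using $e^{2i\lambda}=e^{-\pi}$ one gets the explicit formula
\[
g(w)=e^{-\pi}\,w\,e^{\,2\frac{w-1}{w+1}},
\]
a transcendental meromorphic function on $\C\setminus\{-1\}$ whose only essential singularity is $w=-1$, the $p$-image of the poles $\frac{\pi}{2}+n\pi$ of $f_\lambda$. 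Since $p\circ f_\lambda=g\circ p$ and $p$ is a covering, $\ft=p^{-1}(\mathcal F(g))$ and $\jl=p^{-1}(\mathcal J(g))$; the Fatou components of $f_\lambda$ are exactly the connected components of the $p$-preimages of those of $g$, and $f_\lambda$ on such a component is a lift of $g$ on the corresponding component. Thus everything reduces to the dynamics of the single map $g$ (the same for every $k$) plus the combinatorics of the lift.

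I would then analyse $g$. From $\dfrac{g'}{g}=\dfrac1w+\dfrac{4}{(w+1)^{2}}$ one gets exactly two critical points $w_{c}^{\pm}=-3\pm2\sqrt2$, and the only asymptotic values are $0$ and $\infty$, so $g$ is of finite type. The point $w=0$ is an attracting fixed point (multiplier $e^{-\pi-2}$) and $\infty$ is repelling; moreover $g$ has one further attracting fixed point $w_{*}=\frac{2+\pi}{2-\pi}$, the solution of $2\frac{w-1}{w+1}=\pi$, with multiplier $2-\frac{\pi^{2}}{4}\in(-1,0)$, while all other fixed points of $g$ (corresponding to $\tan(\cdot)=-\lambda+n\pi$, $n\neq k$) are repelling because $\bigl|\,2+((k-n)\pi+i\tfrac{\pi}{2})^{2}\,\bigr|>1$. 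The geometric core is to understand $g$ on the real ray $(-\infty,-1)$: there $g$ has its single maximum at $w_{c}^{-}$, tends to $-\infty$ at both ends, and has $w_{*}$ as its only fixed point; with explicit estimates one shows that a suitable compact subinterval containing $w_{c}^{-}$ is forward invariant and that every point of $(-\infty,-1)$ converges to $w_{*}$. Hence $(-\infty,-1)$ lies in the immediate basin $\mathcal A$ of $w_{*}$, so $\mathcal A$ is unbounded in $\C$; a short estimate gives $g(w_{c}^{+})$ very close to $0$, so $w_{c}^{+}$ lies in the basin $\mathcal B_{0}$ of $0$, whereas $w_{c}^{-}\in\mathcal A$. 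Since $\mathcal A$ is simply connected, contains exactly the one critical point $w_{c}^{-}$, no asymptotic value, and carries an attracting fixed point, $g\colon\mathcal A\to\mathcal A$ is proper of degree $2$, and every component of $g^{-n}(\mathcal A)$ other than $\mathcal A$ maps one-one onto a component of $g^{-(n-1)}(\mathcal A)$. All finite singular values thus lie in $\mathcal B_{0}\cup(\text{basin of }w_{*})$; $g$ being of finite type it has no wandering domains and no Baker domains, and since every singular orbit stays in (and accumulates inside) those two attracting basins there is no rotation or parabolic domain. By Fatou's classification, $\mathcal F(g)=(\text{full basin of }0)\ \sqcup\ \bigsqcup_{n\ge0}g^{-n}(\mathcal A)$.

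Finally I would lift back. The full basin of $0$ is $p$-saturated and its $p$-preimage is a completely invariant Fatou component of $f_\lambda$ containing a half-plane $\{\Im z>c\}$, hence equal to the primary component $B$ (Theorem~\ref{cifcupperhalf}). The set $p^{-1}(\mathcal A)$ is a disjoint union $\{\widetilde{\mathcal A}+j\pi:j\in\Z\}$ of Fatou components of $f_\lambda$, each unbounded since each contains a vertical half-line $\{\tfrac{\pi}{2}+n\pi+it:t<0\}$, a lift of $(-\infty,-1)\subseteq\mathcal A$. Let $z_{*}$ be the lift of $w_{*}$ with $\tan z_{*}=-\tfrac{i\pi}{2}=-\lambda+k\pi$; then $f_\lambda(z_{*})=z_{*}+k\pi$, so $f_\lambda(\widetilde{\mathcal A}+j\pi)=\widetilde{\mathcal A}+(j+k)\pi$. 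Hence each $\widetilde{\mathcal A}+j\pi$ is a wandering domain; putting $W:=\widetilde{\mathcal A}$ and $W_{n}:=\widetilde{\mathcal A}+nk\pi$ gives the two-sided sequence with $f_\lambda\colon W_{n}\to W_{n+1}$ proper of degree $2$, and these domains are escaping because $g^{n}\to w_{*}$ uniformly on compact subsets of $\mathcal A$, whence $f_\lambda^{n}(z)-nk\pi\to z_{*}$ and $f_\lambda^{n}(z)\to\infty$ for $z\in\widetilde{\mathcal A}$. Two components $\widetilde{\mathcal A}+j\pi$ and $\widetilde{\mathcal A}+j'\pi$ lie in the same grand orbit if and only if $j\equiv j'\pmod k$, which yields exactly $k$ distinct grand orbits, represented by $\widetilde{\mathcal A},\widetilde{\mathcal A}+\pi,\dots,\widetilde{\mathcal A}+(k-1)\pi$; any wandering domain in one of these grand orbits distinct from all $W_{n}$ is the lift of a component of some $g^{-n}(\mathcal A)$ other than $\mathcal A$, and on such a component $g$ — hence $f_\lambda$ — is one-one. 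Reassembling, $\ft=B\ \sqcup\ \bigsqcup_{r=0}^{k-1}(\text{grand orbit of }\widetilde{\mathcal A}+r\pi)$, which gives the asserted description together with (1)--(3). The main obstacle I expect is the real-variable analysis of $g$ on $(-\infty,-1)$ — ruling out attracting cycles of period $>1$ and escape there — together with showing $\mathcal A$ (and $\mathcal B_{0}$) simply connected; this is what simultaneously gives unboundedness of each $W_{n}$, degree exactly $2$ of $g|_{\mathcal A}$, and the absence of any hidden Fatou component. A secondary, routine point is to record that the no-wandering-domains theorem applies to $g$, i.e. to the Speiser-class meromorphic function obtained from $g$ by the Möbius map sending the essential singularity $-1$ to $\infty$.
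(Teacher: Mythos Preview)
Your argument is correct and takes a genuinely different route from the paper's.

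The paper works entirely upstairs: for $\lambda=k\pi+i\frac{\pi}{2}$ it exhibits explicit half-strips $R_m=\{|\Re z-(m\pi+\tfrac{\pi}{2})|<\tfrac{\pi}{16},\ \Im z\le-0.6658\}$ and proves by direct trigonometric estimates (Lemma~\ref{estimates}) that $f_\lambda(R_m)\subset R_{m+k}$; the vertical lines $l_{m\pi}\subset B$ (Lemma~\ref{invariantlines}) then force the Fatou components containing the $R_m$'s to be distinct wandering domains. The exclusion of any further Fatou components is done via topological hyperbolicity, Lemma~\ref{wandeingarbitrarylagredisc} and Lemma~\ref{nowanderingTHM}, exactly as in the proof of Theorem~\ref{attractingdomaincomplete}(2).

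Your quotient approach is more conceptual. Passing to $g(w)=e^{-\pi}w\,e^{2(w-1)/(w+1)}$ turns the wandering domains into the immediate basin $\mathcal{A}$ of a genuine attracting fixed point $w_*=\frac{2+\pi}{2-\pi}$ (multiplier $2-\tfrac{\pi^2}{4}$); this explains the otherwise ad~hoc constant $y_0=\tfrac12\ln\tfrac{\pi-2}{\pi+2}$ that appears in the paper's Lemma~\ref{invariantlines}(3). Because $g$ is Speiser class (two critical values, two asymptotic values $0,\infty$), the absence of Baker and wandering domains for $g$---and hence of any non-primary periodic component or stray wandering domain for $f_\lambda$---comes for free from the general classification, bypassing the paper's topological-hyperbolicity machinery. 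The real-variable analysis you flag (convergence of $(-\infty,-1)$ to $w_*$) is exactly the content of Lemma~\ref{invariantlines}(3) in different coordinates, so neither approach avoids it. Two points you should make explicit: simple connectivity of $\mathcal{A}$ follows from the complete invariance of $\mathcal{B}_0$, which in turn holds because $g^{-1}(0)=\{0\}$ (or, circularly but legitimately, from Theorem~\ref{cifcupperhalf}); and properness of $g\colon\mathcal{A}\to\mathcal{A}$ requires noting that neither asymptotic value $0,\infty$ lies in $\mathcal{A}$, so no logarithmic tract over them sits inside $\mathcal{A}$.
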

The wandering domains of $f_\lambda$, $\lambda=\pi+i\frac{\pi}{2}$ are seen in green (see Figure \ref{wan1})
\begin{figure}[h]
	\centering
	\includegraphics[width=10cm,height=6cm,angle=0]{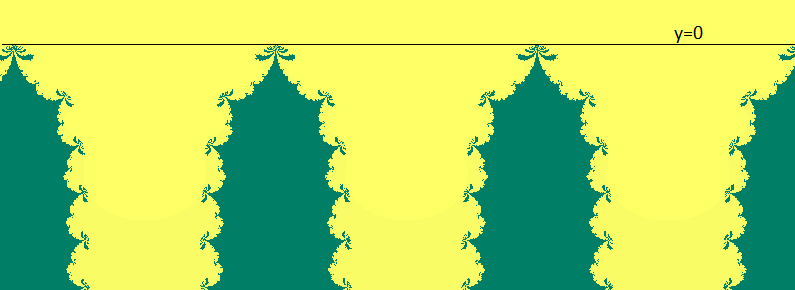}
	\caption{Wandering domains of $f_\lambda$ for $\lambda=\pi+i\frac{\pi}{2}$ in green.}
	\label{wan1}
\end{figure}
 
 For a complex number $z$, $\Im(z)$ and $\Re(z)$ denote the imaginary and the real part of $z$ respectively.
Let $H^+=\{z \in \C~:~\Im(z)>0\}$ and $H^-=\{z \in \C~:~\Im(z)<0\}$ be the upper and the lower half-plane respectively . For any set $A\subset \hC$, the boundary of $A$ is denoted by $\partial A$. For a complex number $w$, let $A+w=\{z+w:z \in A\}$. Let $D(a,r)$ denote the disc centered at $a$ and with radius $r$ and $\mathbb{D}$ denotes the unit disc. 
The set of all integers is denoted by $\mathbb{Z}$.

\section{Preliminaries}
 \subsection{Some useful results}
 We start with a useful result known as the Fundamental Normality Test.
 \begin{lem}\label{FNT}(Fundamental Normality Test)	If $f: \mathbb{C} \to \widehat{\mathbb{C}}$ is a meromorphic function and $D$ is a domain such that $\cup_{n>0} \{f^n(z): z \in D\}$ does not contain at least three points of $\widehat{\mathbb{C}}$ then   $\{f^n\}_{n>0}$ is normal in $D$.
 \end{lem}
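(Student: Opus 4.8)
The plan is to reduce the statement to the classical Montel three-point normality criterion and then to indicate how the latter is proved, since the only feature special to iterates is an elementary observation. First I would fix three points $a,b,c \in \hC$ that are missing from the set $\bigcup_{n>0}\{f^n(z):z\in D\}$, which exist by hypothesis. Because the union of all the forward images avoids $a,b,c$, in particular every single iterate satisfies $f^n(D)\subseteq \Omega$, where $\Omega:=\hC\setminus\{a,b,c\}$. Thus $\{f^n\}_{n>0}$ is a family of meromorphic maps of $D$ into the thrice-punctured sphere $\Omega$, and it suffices to show that any such family is normal. This last assertion is Montel's fundamental criterion, and the rest of the argument is independent of the dynamics.

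To prove normality of maps $D\to\Omega$, I would use the hyperbolic-metric (Ahlfors--Schwarz) method. The surface $\Omega$ omits three points of $\hC$, hence its universal cover is the unit disc $\mathbb{D}$ and $\Omega$ carries a complete hyperbolic metric $\rho_\Omega$ of curvature $-1$. By the Schwarz--Pick lemma applied to a local lift of each $f^n$ to the covers, every holomorphic map $g:D\to\Omega$ is distance-decreasing from the hyperbolic metric $\rho_D$ of $D$ to $\rho_\Omega$; that is, $g^*\rho_\Omega\le \rho_D$. Applying this with $g=f^n$ simultaneously for all $n$ shows that the whole family contracts the hyperbolic metric of $D$ into that of $\Omega$, with a bound that does not depend on $n$.

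The final step is to convert this uniform contraction into spherical equicontinuity and invoke Arzel\`a--Ascoli. The key comparison estimate is that the spherical metric on $\hC$ is dominated by a constant multiple of the hyperbolic metric on $\Omega$, i.e.\ there is $C>0$ with $ds_{\mathrm{sph}}\le C\,d\rho_\Omega$ throughout $\Omega$; this holds because near each puncture the density of $\rho_\Omega$ blows up like $\bigl(|w|\log(1/|w|)\bigr)^{-1}$ while the spherical density stays bounded on the compact sphere, and on the complement of small punctured neighbourhoods only a compact piece of $\Omega$ remains, on which the two smooth metrics have bounded ratio. Combining the contraction with this comparison gives, for every compact $K\subset D$, a uniform bound on the spherical length of the $f^n$-image of any short curve in $K$; since $\rho_D$ is comparable to the Euclidean metric on $K$, the family $\{f^n\}$ is equicontinuous there in the spherical metric. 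The spherical version of Arzel\`a--Ascoli then yields a subsequence converging locally uniformly on $D$, whose limit is a continuous map into $\hC$, hence either meromorphic or $\equiv\infty$; this is precisely normality.

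The main obstacle I expect is not the reduction, which is immediate, but the two analytic inputs underlying Montel's criterion: first, the uniformization statement that a domain omitting three points of the sphere is hyperbolic (equivalently, the existence of the modular covering $\mathbb{D}\to\Omega$), and second, the comparison between the hyperbolic and spherical metrics near the punctures that makes the contraction estimate uniform up to the boundary. An alternative to the second input would be to lift the family directly through the covering map and appeal to Montel's elementary theorem for uniformly bounded families, but one must then argue that a subsequence either converges inside $\Omega$ or escapes to one of the punctures $a,b,c$, so that the limit is still an admissible (possibly constant) map into $\hC$; both routes hinge on the hyperbolic geometry of $\Omega$.
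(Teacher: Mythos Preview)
Your proof outline is correct: the reduction to Montel's three-point criterion is immediate, and the hyperbolic-metric argument via Schwarz--Pick and the spherical/hyperbolic comparison is a standard and valid route to that criterion.

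However, the paper does not prove this lemma at all. It is stated without proof as a well-known classical result (``a useful result known as the Fundamental Normality Test'') and is simply quoted for later use. So there is no approach in the paper to compare yours against; you have supplied a genuine proof where the authors only cite the statement.
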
  
 %
A point $a$ on the boundary of a simply connected domain $U$ is called accessible from $U$ if there exists a curve $\gamma:[0,1]\rightarrow \widehat{\mathbb{C}}$ such that $\gamma([0,1)) \subset U$ and $\lim_{t\rightarrow 1^-}\gamma(t)=a$. We say that $\gamma$ lands at $a$.
There are simply connected domains such that a  point on its boundary is not accessible. In particular,  $\lim_{t_n \to 1-} \gamma(t)$ may be different for different sequences $t_n$ converging to $1$ from the left hand side. Some such examples can be found in ~\cite{milnor}. For an accessible point, there are uncountably many curves landing on it. What is important is the set of homotopically equivalent classes of such curves.
\begin{defn}{ {(Access)}}
	For a simply connected domain $U$, let $z_0 \in U$ and  $a\in \partial U$ be an accessible point. An access $\mathcal{A}$ from $U$ to $a$ is the class of all curves $\gamma:[0,1]\rightarrow \widehat{\mathbb{C}}$ homotopic to each other such that $\gamma([0,1)) \subset U$, $\gamma(0)=z_0$ and  $\lim_{t\rightarrow 1^-}\gamma(t)=a$.
\end{defn}  
Accesses on simply connected Fatou components of a meromorphic function are our concern.
\begin{defn}{ {(Invariant and strongly invariant access)}} Let $U$ be a simply connected and invariant Fatou component of a meromorphic function $f$.  An access
	$\mathcal{A}$  from $U$  to one of its boundary points $a$ is called invariant if there exists $\gamma \in \mathcal{A}$ such that $f(\gamma) \cup \gamma_1 \in \mathcal{A}$, where $\gamma_1: [0,1] \to U$ is a curve contained in $ U$ such that  $\gamma_1(0)=z_0$ and $\gamma_1(1)=f(z_0)$. If $f(\gamma) \cup \gamma_1 \in \mathcal{A}$  for every $\gamma \in \mathcal{A}$ then $\mathcal{A}$  is called a strongly invariant access.
\end{defn}

For an invariant simply connected Fatou component $U$ of $f$, if $\phi :\mathbb{D}\rightarrow U$ is the Riemann map then the inner function $g:\mathbb{D}\rightarrow \mathbb{D}$ associated with $f$ is defined as $g=\phi^{-1}\circ f\circ \phi$. 
We need the following result (Theorem B,~\cite{access}) relating the behaviour of $g$ on the unit circle to that of $f$ on the boundary of $U$. A fixed point of $f$ is called weakly repelling if it is either repelling or is parabolic with multiplier equal to $1$.
\begin{thm}\label{accesstoinfty}
	Let $U$ be a simply connected and invariant  Fatou component of $f$ and $g=\phi^{-1}\circ f\circ \phi$ be the inner function associated with $f|_U$. If  the degree $d$ of $f$ on $U$ is finite and $d_1$ is the number of fixed points of $g$ in $\partial\mathbb{D}$ then $f$ has exactly $d_1$  invariant accesses, and    $d-1 \leq d_1\leq d+1$. Moreover, every invariant access of $f$ from $U$ either lands at $\infty$ or at a weakly repelling fixed point of $f$.
	
\end{thm}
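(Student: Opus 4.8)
The plan is to pass to the cylinder $\C/\pi\Z$ and read off the dynamics of $f_\lambda$ from a single, parameter-independent self-map there. I would take $\lambda=k\pi+i\frac{\pi}{2}$ as the parameter producing $k$ grand orbits, and record the factorization $f_\lambda=\sigma^{k}\circ f_{i\pi/2}$, where $\sigma(z)=z+\pi$ and $f_\lambda(z+\pi)=f_\lambda(z)+\pi$. Writing $\psi(z)=e^{2iz}$ and using $\tan z=-i\frac{w-1}{w+1}$ with $w=\psi(z)$, a short computation gives $\psi\circ f_\lambda=F\circ\psi$ where $F(w)=e^{2i\lambda}\,w\,\exp\!\big(\tfrac{2(w-1)}{w+1}\big)$. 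The crucial observation is that $e^{2i\lambda}=e^{-\pi}$ for \emph{every} integer $k$, so $F$ is one and the same holomorphic self-map of $\hC$ with a single essential singularity at $w=-1$, for all the parameters in question; only the lift encoded by $\sigma^{k}$ changes. Since $\psi$ is the universal covering of $\C\setminus\{0\}$ with deck group $\langle\sigma\rangle=\pi\Z$ and is a local biholomorphism, normality transfers in both directions, giving $\ft=\psi^{-1}(\mathcal F(F))$; hence the Fatou components of $f_\lambda$ are precisely the connected components of the $\psi$-preimages of those of $F$.

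I would then classify the Fatou set of $F$. Its critical points solve $\frac1w+\frac{4}{(w+1)^2}=0$, i.e. $w=-3\pm2\sqrt2$, and (by writing $\frac{2(w-1)}{w+1}=2-\frac{4}{w+1}$ near $w=-1$) its only asymptotic values are $0$ and $\infty$; thus $F$ has finitely many singular values and, by the result quoted in the introduction, admits neither a wandering domain nor a Baker domain. The point $w=0$ is an attracting fixed point with multiplier $e^{-\pi-2}$, whose basin lifts via $\psi$ to the primary Fatou component $B$. A second attracting fixed point sits at $w_{0}=\frac{1+\pi/2}{1-\pi/2}$ with multiplier $2-\frac{\pi^2}{4}$; call its immediate basin $V_{0}$. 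A direct estimate shows that the critical value at $-3+2\sqrt2$ lies in the basin of $0$ while the critical value at $-3-2\sqrt2$ lies in $V_{0}$, so every singular value is absorbed by these two basins. Consequently $F$ has exactly two invariant Fatou components, the basins of $0$ and of $w_{0}$, and $\mathcal F(F)$ is the union of their grand orbits.

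Lifting back is the heart of the matter. The basin of $0$ lifts to $B$, while $V_{0}$ lifts to a $\pi$-periodic family $\{\tilde V_{m}=\tilde V_{0}+m\pi\}_{m\in\Z}$, with each $\tilde V_{m}$ mapping biholomorphically to $V_{0}$ under $\psi$. Because $z_{0}+m\pi$ (where $\psi(z_{0})=w_{0}$) is an attracting fixed point of $f_{i\pi/2}$, the component $\tilde V_{m}$ is invariant under $f_{i\pi/2}$; applying $\sigma^{k}$ yields $f_\lambda(\tilde V_{m})\subseteq\tilde V_{m+k}$. Since $k\geq1$ the index strictly increases, so no $\tilde V_{m}$ is preperiodic and each is a wandering domain. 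The shift $m\mapsto m+k$ on the index set has exactly $k$ grand orbits, namely the residue classes modulo $k$, which yields the claimed $k$ wandering domains with distinct grand orbits and shows that $\ft$ is the union of $B$ and these $k$ grand orbits. For the escaping property I would note that on $\tilde V_{m}$ one has $\psi(f_\lambda^{n}(z))=F^{n}(\psi(z))\to w_{0}\neq 0,\infty$, so $\Im(f_\lambda^{n}(z))$ stays bounded while the index drift forces $\Re(f_\lambda^{n}(z))\to+\infty$ uniformly on compacta; hence the only limit function is $\infty$. For the degree-two backbone I take $W_{n}=\tilde V_{nk}$: the map $F$ on $V_{0}$ is proper of degree $2$ since $V_{0}$ contains exactly the one simple critical point $-3-2\sqrt2$, and conjugating by the biholomorphisms $\psi|_{\tilde V_{nk}}$ transports this to $f_\lambda\colon W_{n}\to W_{n+1}$ being proper of degree $2$; the remaining members of a grand orbit are lifts of strict $F$-preimages of $V_{0}$ carrying no critical point, on which $F$, and hence $f_\lambda$, is injective.

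The main obstacle is the global analysis of $F$ and the passage between the two levels. I expect the delicate points to be: establishing the transfer principle $\ft=\psi^{-1}(\mathcal F(F))$ rigorously across the puncture and the essential singularity at $w=-1$; locating the two critical values in the correct basins by honest estimates rather than numerics; confirming that the critical \emph{point} $-3-2\sqrt2$ (not merely its value) actually lies in $V_{0}$, which is what forces the degree to be exactly $2$; and controlling the topology of $V_{0}$, namely its simple connectivity and the unboundedness of the lifts $\tilde V_{m}$, so that the stated properness, injectivity, and unboundedness assertions all hold.
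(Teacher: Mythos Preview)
Your proposal does not address the stated theorem. Theorem~\ref{accesstoinfty} is a general structural result about invariant accesses from a simply connected invariant Fatou component $U$: it counts them via the boundary fixed points of the associated inner function $g=\phi^{-1}\circ f\circ\phi$ on $\partial\mathbb{D}$, and asserts that each lands at $\infty$ or at a weakly repelling fixed point. In the paper this is not proved at all; it is quoted verbatim as Theorem~B of Bara\'nski--Fagella--Jarque--Karpi\'nska~\cite{access} and used as a black box in the proof of Theorem~\ref{attractingdomaincomplete}. Nothing in your write-up---the semiconjugacy $\psi(z)=e^{2iz}$, the map $F(w)=e^{-\pi}w\exp\!\big(\tfrac{2(w-1)}{w+1}\big)$, the analysis of its two attracting basins, the lifting to $\pi$-translates---touches inner functions, radial extensions, or boundary fixed points on $\partial\mathbb{D}$, which are the objects the statement is about.

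What you have actually sketched is a proof of Theorem~\ref{wandeing domain} (the $k$ grand orbits of escaping wandering domains for $\lambda=k\pi+i\frac{\pi}{2}$). For \emph{that} theorem your approach is genuinely different from the paper's and quite attractive: the paper works by hand, building explicit half-strips $R_m=\{|\Re z-(m\pi+\tfrac{\pi}{2})|<\tfrac{\pi}{16},\ \Im z\le -0.6658\}$ and checking $f_\lambda(R_m)\subset R_{m+k}$ via the numerical estimates of Lemma~\ref{estimates}, then invoking topological hyperbolicity and Lemma~\ref{nowanderingTHM}. Your route trades those estimates for a global classification of $\mathcal F(F)$, which is conceptually cleaner and makes the degree-$2$ backbone and the count of grand orbits transparent, at the cost of the transfer issues you yourself flag (normality across the puncture and the essential singularity at $w=-1$, and locating $-3-2\sqrt{2}$ in $V_0$). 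But none of this is relevant to Theorem~\ref{accesstoinfty}; you have proved the wrong theorem.
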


%

 Recall that the post singular set of $f$, denoted by  $P(f)$ is $$P(f)=(\cup_{s\in S_f}\cup_{n\geq0}f^n(s))\cap \mathbb{C}.$$   Here is a well-known result.
\begin{lem}\label{singularvalues}
Every attracting domain and parabolic domain of a meromorphic function intersects  $S_f$. If  $U$ is a rotational domain then  $\partial U \subset  \overline{P(f)}$. In particular, the Fatou set of a topologically hyperbolic map cannot contain any rotational domain.
\end{lem}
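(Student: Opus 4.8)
The plan is to prove the three assertions separately: (a) every invariant attracting domain meets $S_f$; (b) every invariant parabolic domain meets $S_f$; and (c) $\partial U\subseteq P(f)$ for every rotational domain $U$. (For a $p$-periodic component one argues the same way with $f^p$, using $S_{f^p}\subseteq\bigcup_{j\ge 0}f^j(S_f)\subseteq P(f)$, which in the rotational case immediately reduces (c) to the invariant case; in the attracting and parabolic cases one gets that the whole cycle of immediate basins meets $S_f$. I would spell out only the invariant case, which is what is used later.) Granting (c), the final ``in particular'' is immediate: a rotational domain $U$ has $\emptyset\neq\partial U\subseteq\mathcal{J}(f)$, and $\partial U\neq\{\infty\}$ (otherwise $U=\C$, impossible since $\mathcal{J}(f)\neq\emptyset$), so $\partial U\cap\C\neq\emptyset$; if $\partial U\subseteq P(f)$ this gives $\emptyset\neq\partial U\cap\C\subseteq P(f)\cap\mathcal{J}(f)\cap\C$, contradicting topological hyperbolicity.

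For (a), let $U$ be invariant with attracting fixed point $z_0\in U$ of multiplier $\alpha$, $|\alpha|<1$, and suppose for contradiction that $U\cap S_f=\emptyset$. I would pick a round disc $D_0\ni z_0$ with $\overline{f(D_0)}\subseteq D_0\subseteq U$, let $D_{n+1}$ be the component of $f^{-1}(D_n)$ containing $z_0$, and note, using complete invariance of $\mathcal{F}(f)$, that $D_0\subseteq D_1\subseteq\cdots\subseteq U$. Since each $D_n\subseteq U$ is simply connected and contains no singular value, the standard univalent-lifting fact makes $f\colon D_{n+1}\to D_n$ a biholomorphism, so $h_n:=(f^n|_{D_n})^{-1}\colon D_0\to U$ is univalent with $h_n(z_0)=z_0$ and $h_n'(z_0)=\alpha^{-n}$, hence $|h_n'(z_0)|\to\infty$. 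But every $h_n$ maps into $U\subseteq\hC\setminus\mathcal{J}(f)$ and $\mathcal{J}(f)$ is infinite, so $\{h_n\}$ is normal on $D_0$ by Montel's theorem; as $h_n(z_0)=z_0$, no subsequential limit is $\equiv\infty$, so $h_n'(z_0)$ must converge — a contradiction.

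Case (b) will run on the same pullback scheme with $D_0$ replaced by an attracting petal $P_0\subseteq U$ (after passing to an iterate I may assume the parabolic point $z_0\in\partial U$ has multiplier $1$). Assuming $U\cap S_f=\emptyset$, pulling back $P_0$ produces a nested exhaustion $P_0\subseteq P_0^{(1)}\subseteq\cdots$ of $U$ by domains biholomorphic to $P_0$, so $U$ is simply connected; moreover $U$ contains no critical point of $f$, and $f$ restricts to a proper self-map of $U$ (since $U$ contains no asymptotic value), hence to a covering of the simply connected hyperbolic surface $U$, hence to a biholomorphism with no fixed point in $U$. The Fatou coordinate $\Phi\colon U\to\C$ with $\Phi\circ f=\Phi+1$ then descends to a conformal isomorphism $U/\langle f\rangle\cong\C/\langle w\mapsto w+1\rangle\cong\C^{*}$; but $U/\langle f\rangle$ is the quotient of a disc by a fixed-point-free automorphism, hence a hyperbolic surface (a punctured disc or an annulus), which is not conformally $\C^{*}$ — a contradiction. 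For (c), after passing to an iterate assume $U$ is invariant with $f|_U$ conjugate to an irrational rotation, and suppose $\zeta\in\partial U$ has a round neighborhood $N$ with $N\cap P(f)=\emptyset$. Since $S_f\subseteq P(f)$ and $f(P(f)\cap\C)\subseteq P(f)$, induction shows every branch of every $f^{-n}$ extends holomorphically over $N$ with image missing $P(f)$ (if $w\in f^{-n}(N)\cap P(f)$ then $f^n(w)\in N\cap P(f)=\emptyset$), so these branches form a normal family on $N$ by Montel's theorem. Choosing branches compatibly with the rotation, on $N\cap U$ they are inverse branches of iterates of the rotation, and irrationality lets me extract a locally uniform limit $\omega$ that is a non-trivial rotation on $N\cap U$, hence nonconstant, hence univalent on $N$ by Hurwitz; the contradiction then comes from the fact that these branches carry $N\cap U$ into $U$, $N\cap\partial U$ into $\partial U$ and $N\setminus\overline U$ into $\hC\setminus\overline U$, which is the classical Sullivan--Ma\~n\'e--Douady mechanism showing a Julia point cannot be so tame — I would either reproduce that argument or cite it.

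The routine step is (a). I expect the genuine difficulties to be the endgames of (b) and (c), where there is no convenient derivative to blow up: in (b), verifying that the petal pullbacks really exhaust $U$ and carrying out the Fatou-coordinate/quotient comparison; in (c), making the inverse-branch normality argument rigorous, including the point that in this situation $P(f)$ has at least three points so that Montel indeed applies.
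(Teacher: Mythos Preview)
The paper does not prove this lemma: it is introduced with the phrase ``Here is a well-known result'' and then stated without proof or specific reference. There is therefore nothing in the paper to compare your argument against; you are supplying what the authors deliberately omit.

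Your outline is the classical one and is correct in substance. Two minor comments on the execution. In (b), the quotient formulation is slightly roundabout: once you know the pullbacks $P_0^{(n)}$ exhaust $U$ and each $f\colon P_0^{(n+1)}\to P_0^{(n)}$ is a biholomorphism, the extended Fatou coordinate $\Phi\colon U\to\C$ is itself a biholomorphism onto $\C$ (since $\Phi(P_0^{(n)})=\{\Re w>C-n\}$ once $\Phi(P_0)=\{\Re w>C\}$), and this already contradicts $U\subsetneq\C$ by Liouville --- there is no need to pass to $U/\langle f\rangle$ and compare hyperbolic types. In (c), you can sidestep the ``does $P(f)$ have three points'' issue by using Koebe distortion instead of Montel: the chosen branches $g_n$ are univalent on the round disc $N$, and at a fixed base point $z_0\in N\cap U$ both $g_n(z_0)$ and $g_n'(z_0)$ stay bounded (they are read off from the linearizing rotation on $U$), which gives normality of $\{g_n\}$ on all of $N$ directly.
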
 
The following lemma  proved in \cite{fagella2019} reveals the connection of the singular values with the Fatou components. In particular, this is more relevant for Baker domains and wandering domains for topologically hyperbolic meromorphic maps.
%
%

\begin{lem}
\label{wandeingarbitrarylagredisc}

Let $U$ be a Fatou component of a topologically hyperbolic meromorphic map $f$ such that  $U_n \cap P(f)=\emptyset$ for all $n >0$. Then for every compact set $K \subset U$ and every $r>0$, there exists $n_0$ such that for every $z \in K$ and every $n \geq n_0$, $D(f^n(z),r) \subset U_n$. 

\end{lem}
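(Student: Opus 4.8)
\noindent\emph{Proof idea.} The plan is to run the classical hyperbolic--contraction argument on the complement of the postsingular set and then convert the resulting control of the hyperbolic metric of $U_n$ into a Euclidean lower bound for $\operatorname{dist}(f^n(z),\partial U_n)$ by means of Koebe's theorem. Put $\Omega=\hC\setminus P(f)$; as $\Omega$ contains the infinite set $\mathcal{J}(f)\cap\C$, it is a hyperbolic domain, and we let $\rho_D$ denote the density of the hyperbolic metric of a hyperbolic domain $D$ with respect to $|dz|$, normalised by $\rho_{\mathbb{D}}(0)=1$. Since $P(f)$ is forward invariant, $f^{-1}(\Omega)\subseteq\Omega$, and since $\Omega$ omits every singular value of $f$, the map $f\colon f^{-1}(\Omega)\to\Omega$ is a holomorphic covering; hence each $f^n\colon\Omega_n\to\Omega$ is a covering, where $\Omega_n:=f^{-n}(\Omega)$, and $\Omega\supseteq\Omega_1\supseteq\Omega_2\supseteq\cdots$. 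The hypothesis gives $U_m\subseteq\Omega$ for $m>0$, so $U\subseteq\Omega_n$ for every $n$. Moreover $U\neq B$ (else $U_1=B$ would meet $P(f)$, since by Remark~\ref{cifc-basic}(3) $B$ contains critical points, hence critical values), and nothing other than $B$ lands on $B$ because $B$ is completely invariant; thus every $U_n$ is simply connected, by Remark~\ref{cifc-basic}(2).

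The first step is to prove that there is a constant $c_K>0$, depending only on $K$ and $U$, with
\[
\operatorname{dist}\!\big(f^n(z),\partial U_n\big)\ \ge\ c_K\,\operatorname{dist}\!\big(f^n(z),P(f)\big)\qquad(z\in K,\ n\ge 1).
\]
A covering map is a local isometry for the hyperbolic metric, so $\rho_{\Omega_n}(z)=\rho_{\Omega}(f^n(z))\,|(f^n)'(z)|$ for $z\in\Omega_n\supseteq U$, whereas Schwarz--Pick applied to the holomorphic map $f^n\colon U\to U_n$ gives $\rho_{U_n}(f^n(z))\,|(f^n)'(z)|\le\rho_U(z)$. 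Dividing and using monotonicity of the hyperbolic metric under the inclusions $U\subseteq\Omega_n\subseteq\Omega$,
\[
\rho_{U_n}(f^n(z))\ \le\ \frac{\rho_U(z)}{\rho_{\Omega_n}(z)}\,\rho_{\Omega}(f^n(z))\ \le\ \frac{\rho_U(z)}{\rho_{\Omega}(z)}\,\rho_{\Omega}(f^n(z)),
\]
and $\rho_U/\rho_{\Omega}$ is continuous and positive, hence bounded on $K$, say by $C_K$. Because $U_n$ is simply connected, Koebe's $\tfrac14$-theorem applied to a Riemann map $\mathbb{D}\to U_n$ sending $0$ to $f^n(z)$ gives $\operatorname{dist}(f^n(z),\partial U_n)\ge\tfrac14\rho_{U_n}(f^n(z))^{-1}$, while embedding the maximal round disc about a point $a\in\Omega$ into $\Omega$ gives $\rho_{\Omega}(a)^{-1}\ge\operatorname{dist}(a,\partial\Omega)=\operatorname{dist}(a,P(f))$. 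Combining these three facts yields the estimate with $c_K=(4C_K)^{-1}$.

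Since $f$ is topologically hyperbolic, $\partial U_n\subseteq\mathcal{J}(f)\cap\C$ is disjoint from $P(f)$, so the segment from $f^n(z)$ to the nearest point of $P(f)$ meets $\partial U_n$; hence also $\operatorname{dist}(f^n(z),\partial U_n)\le\operatorname{dist}(f^n(z),P(f))$, and the lemma becomes equivalent to the statement that $\operatorname{dist}(f^n(z),P(f))\to\infty$ as $n\to\infty$, uniformly for $z\in K$. I would prove this by contradiction: if it failed, there would be $R_0>0$, $n_j\to\infty$ and $z_j\in K$ with $\operatorname{dist}(f^{n_j}(z_j),P(f))\le R_0$, and, after passing to subsequences, $z_j\to z_*\in K$ and $f^{n_j}\to g$ locally uniformly near $z_*$ with $g\equiv w_*\in\hC$ a constant (by Lemma~\ref{singularvalues} and the hypothesis, $U$ is a Baker or wandering domain or a preimage of one, so all limit functions of $\{f^n\}$ on $U$ are constant). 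If $w_*\in\C$ then $w_*\in\mathcal{J}(f)$, since a finite constant limit of $\{f^n\}$ on $U$ forces $U$ to be eventually periodic unless it lies in the Julia set; by topological hyperbolicity $w_*\notin P(f)$, so $\operatorname{dist}(f^{n_j}(z_j),P(f))\to\operatorname{dist}(w_*,P(f))=:2\delta>0$, whence by the first step $D(f^{n_j}(z_j),c_K\delta)\subset U_{n_j}$ for all large $j$; as $f^{n_j}(z_j)\to w_*$ this puts $w_*\in U_{n_j}$, contradicting $w_*\in\mathcal{J}(f)$.

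The outstanding case $w_*=\infty$ is, I expect, the main obstacle. One would want to conjugate by $w\mapsto 1/w$ at $\infty$ and repeat the argument, but $\infty$ may itself belong to $P(f)$ — and it does for $f_\lambda$, whose set of singular values is unbounded — so the estimate of the first step becomes vacuous in that chart and the argument cannot close on purely function-theoretic grounds; one has to inject concrete information about the way $P(f)$ approaches $\infty$ and the way the orbit of $K$ escapes (for $f_\lambda$, this is where the description of its postsingular set obtained in the rest of the paper enters). Once $\operatorname{dist}(f^n(z),P(f))\to\infty$ uniformly on $K$ is known, the first step yields $D(f^n(z),r)\subset U_n$ for every $z\in K$ and every $n\ge n_0$, which is the assertion. \qed
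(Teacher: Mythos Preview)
The paper does not supply a proof of this lemma; it is quoted verbatim from Bara\'nski--Fagella--Jarque--Karpi\'nska (reference~\cite{fagella2019}), so there is no in--paper argument to compare yours against.

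As for your attempt, the gap you yourself flag is genuine and, in this context, fatal. Every application of the lemma in the paper is to Baker domains or escaping wandering domains of $f_\lambda$, where $f^n\to\infty$ on $U$ and $P(f_\lambda)$ is unbounded; thus the case $w_*=\infty$ with $\operatorname{dist}(f^{n_j}(z_j),P(f))$ bounded is exactly the case that must be handled, not a residual technicality. Your first--step inequality $\operatorname{dist}(f^n(z),\partial U_n)\ge c_K\,\operatorname{dist}(f^n(z),P(f))$ is correct but yields nothing when the right--hand side stays bounded, and nothing in the argument excludes that. The proposal to ``inject concrete information about the way $P(f)$ approaches $\infty$'' is not carried out, and even if it were, it would produce a statement special to $f_\lambda$ rather than the general lemma. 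So what you have is a proof of the comparability $\operatorname{dist}(f^n(z),\partial U_n)\asymp\operatorname{dist}(f^n(z),P(f))$, which is a useful reduction, but not a proof that either quantity tends to infinity.

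There is also a secondary issue: your Koebe step $\operatorname{dist}(w,\partial U_n)\ge\tfrac14\rho_{U_n}(w)^{-1}$ requires $U_n$ to be simply connected, and you justify this via Remark~\ref{cifc-basic}(2), which is specific to $f_\lambda$. The lemma is stated for an arbitrary topologically hyperbolic meromorphic map, for which $U_n$ need not be simply connected, and then no universal estimate of this form holds (for multiply connected hyperbolic domains $\operatorname{dist}(\cdot,\partial D)\cdot\rho_D(\cdot)$ is not bounded below by a positive constant).
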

We end this subsection by stating a very important result. For a continuous map $f: V \rightarrow U$ between two open connected subsets of $\mathbb{C}$ if the pre-image of each compact subset of $U$ is compact in $V$ then $f$ is called proper. Further, if $f$ is analytic then there is a $d$ such that every element of $U$ has $d$ pre-images counting multiplicity. Here, the multiplicity of a point $z$ is the local degree of $f$ at $z$. This number $d$ is known as the degree of  $f: V \to U$.
The following lemma proved in \cite{bolsch} is to be applied repeatedly.
\begin{lem}\label{RH}{ {(Riemann-Hurwitz formula)}}

Let $f:\mathbb{C}\rightarrow \widehat{\mathbb{C}}$ be a transcendental meromorphic function. If $V$ is a  component of the pre-image of  an open connected set $U$ and $f : V \rightarrow U$ is a proper map of degree $d$, then $c(V)-2=d(c(U)-2)+n$, where $n$ is the number of critical points of $f$ in $V$ counting multiplicity and $n \leq 2d-2$. Here, the multiplicity of a critical point is one less than the local degree of $f$ at the critical point.  
\end{lem}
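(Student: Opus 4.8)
The plan is to treat this as the purely topological Riemann--Hurwitz count for branched coverings, the holomorphy of $f$ entering only through the fact that in suitable local coordinates a critical point of local degree $e$ looks like $w\mapsto w^{e}$; thus $f\colon V\to U$ is a branched covering of degree $d$ whose branch points are precisely the critical points, each critical point $z$ contributing $e_z-1$ to the total ramification $n$ (where $e_z$ denotes the local degree of $f$ at $z$). Here $c(\cdot)$ denotes the connectivity of the domain, and I would first record the dictionary $\chi=2-c$ relating connectivity to the Euler characteristic of a finite-type planar domain (a disk has $c=1,\ \chi=1$; an annulus has $c=2,\ \chi=0$; in general a sphere with $c$ holes has $\chi=2-c$). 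With this dictionary the asserted identity $c(V)-2=d(c(U)-2)+n$ is equivalent to $\chi(V)=d\,\chi(U)-n$, which is the statement I would actually prove. I would also note at the outset that properness together with finiteness of the degree forces $f$ to have only finitely many critical points in $V$, so $n$ is a finite nonnegative integer, and that the formula is meaningful only when $c(U)$ is finite, which I assume throughout.

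The core of the argument is a triangulation count, and to run it cleanly on the non-compact domains $U$ and $V$ I would first pass to compact bordered surfaces. Since $U$ has finite connectivity, choose a relatively compact subdomain $U'\Subset U$ bounded by finitely many smooth curves, such that $U'$ is a deformation retract of $U$ and contains all (finitely many) critical values of $f$ in its interior; then $\chi(U')=\chi(U)$. Because every branch point lies over $U'$, the map $f$ is an honest covering over $U\setminus U'$, so the deformation retraction of $U$ onto $U'$ lifts, exhibiting $V':=f^{-1}(U')\cap V$ as a deformation retract of $V$ with $\chi(V')=\chi(V)$, and showing that $f\colon V'\to U'$ is again proper of degree $d$ with the same critical points. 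Now triangulate $U'$ so finely that every critical value is a vertex and no closed face or edge meets a critical value except at such a vertex, and lift the triangulation through $f$. Over the complement of the critical values $f$ is a $d$-sheeted covering, so each open face and each open edge has exactly $d$ preimages, whereas a vertex $v$ has exactly $d-\sum_{z\in f^{-1}(v)}(e_z-1)$ preimage vertices. Summing, the face and edge counts of $V'$ are $d$ times those of $U'$, and the vertex count drops by $\sum(e_z-1)=n$ summed over the critical points; hence $\chi(V')=d\,\chi(U')-n$. Substituting $\chi=2-c$ and using $\chi(U')=\chi(U)$, $\chi(V')=\chi(V)$ rearranges this into $c(V)-2=d(c(U)-2)+n$.

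For the bound $n\le 2d-2$, I would observe that each boundary curve of $U'$ carries no critical value, so $f$ restricts to an honest covering of that circle; its preimage is therefore a finite disjoint union of circles, at most $d$ in number since the covering has degree $d$. Consequently $V'$ has at most $d$ boundary components over each of the $c(U')$ boundary components of $U'$, giving $c(V)=c(V')\le d\,c(U')=d\,c(U)$. Substituting this into the already-established identity $n=c(V)-d\,c(U)+2d-2$ yields $n\le 2d-2$.

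I expect the main obstacle to be the reduction to compact bordered surfaces, specifically the justification that the deformation retraction of $U$ onto $U'$ lifts to exhibit $V'$ as a deformation retract of $V$ with the degree and critical set preserved. This is precisely where properness, the finiteness of the critical set, and the confinement of all branching to $U'$ are essential, and it is the step that converts the intuitively transparent vertex/edge/face bookkeeping into a rigorous equality of Euler characteristics for the original open domains $U$ and $V$.
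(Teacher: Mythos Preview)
Your argument is essentially correct and is the standard topological proof of the Riemann--Hurwitz relation for proper holomorphic maps between planar domains: pass to compact bordered retracts, triangulate so that critical values are vertices, lift, and count. The identification $\chi=2-c$ is the right dictionary, the vertex/edge/face count goes through as you describe, and your derivation of the bound $n\le 2d-2$ from $c(V)\le d\,c(U)$ is clean. The only point where more care is sometimes needed is exactly the one you flag: showing that $V':=f^{-1}(U')\cap V$ is a compact bordered surface and a deformation retract of $V$. Properness gives compactness of $V'$, and since all critical values lie in $U'$ the restriction of $f$ to $V\setminus V'$ is an unbranched covering of the collar $U\setminus U'$, so the retraction lifts; this also shows a posteriori that $c(V)$ is finite even if one did not assume it.

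As for comparison with the paper: there is nothing to compare. The paper does not prove this lemma at all; it merely quotes it from Bolsch (\emph{Periodic Fatou components of meromorphic functions}, Bull.\ Lond.\ Math.\ Soc.\ 31 (1999), 543--555) and uses it as a black box. Your write-up therefore supplies what the paper omits, and does so along the classical lines one finds in Bolsch or in standard references on branched coverings.
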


\subsection{Some basic properties of $f_\lambda$}
We make few preliminary observations on  $f_{\lambda}(z)=\lambda+z+\tan z$ for $\Im(\lambda)>0$.
First note that $\tan (z+\pi)=\tan z$ for all $z$ and  for $z=x+iy$, $$\Re(\tan z) = \frac{\sin 2x}{\cos 2x+\cosh 2y}~~~\mbox{and} ~~~\Im(\tan z) = \frac{\sinh 2y}{\cos 2x+\cosh 2y}.$$

\begin{lem}\label{symmetry}
	The Fatou set $\mathcal{F}(f_\lambda)$ is invariant under $z \mapsto z+\pi$ i.e., $z \in \mathcal{F}(f_\lambda)$ if and only if $z+\pi \in \mathcal{F}(f_\lambda)$. If a Fatou component $U$ contains a point $z$ and its $k\pi-$translate $z+k \pi$ for some non-zero $k \in \mathbb{Z}$ then $\{\Re(z): z \in U\}=\mathbb{R}$. In particular, this is true if $U$ contains a horizontal line segment of length bigger than $\pi$. 
\end{lem}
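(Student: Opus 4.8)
The plan is to prove Lemma~\ref{symmetry} in three parts, each exploiting the $\pi$-periodicity of $\tan z$.

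\textbf{Step 1: Invariance of $\mathcal{F}(f_\lambda)$ under $z \mapsto z+\pi$.}
First I would observe that $f_\lambda(z+\pi) = \lambda + z + \pi + \tan(z+\pi) = \lambda + z + \pi + \tan z = f_\lambda(z) + \pi$, using $\tan(z+\pi)=\tan z$. By an easy induction this gives $f_\lambda^n(z+\pi) = f_\lambda^n(z) + \pi$ for all $n \geq 1$. Now let $T(z) = z + \pi$, which is an affine (hence conformal) automorphism of $\widehat{\mathbb{C}}$ fixing $\infty$; the displayed identity says $f_\lambda \circ T = T \circ f_\lambda$, so $f_\lambda^n \circ T = T \circ f_\lambda^n$. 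Since $T$ is a Euclidean isometry of $\mathbb{C}$, normality of $\{f_\lambda^n\}$ on a neighbourhood of $z$ is equivalent to normality of $\{f_\lambda^n\}$ on the translated neighbourhood of $z+\pi$ (composing a normal family with the fixed isometry $T$ on either side preserves normality). Hence $z \in \mathcal{F}(f_\lambda) \iff z + \pi \in \mathcal{F}(f_\lambda)$, and in fact $T$ maps Fatou components to Fatou components.

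\textbf{Step 2: If $z, z+k\pi \in U$ for some Fatou component $U$ and nonzero $k \in \mathbb{Z}$, then $\{\Re(w): w\in U\} = \mathbb{R}$.}
By Step 1, $T^k(U) = U + k\pi$ is again a Fatou component. It contains $z + k\pi$, which also lies in $U$; since distinct Fatou components are disjoint, $U + k\pi = U$. Iterating, $U + mk\pi = U$ for every $m \in \mathbb{Z}$, so $U$ is invariant under translation by the infinite set $\{mk\pi : m \in \mathbb{Z}\}$. Since $U$ is open and nonempty it contains a small disc $D(w_0,\varepsilon)$; then $U$ contains $\bigcup_{m\in\mathbb{Z}} D(w_0 + mk\pi, \varepsilon)$, whose set of real parts is $\bigcup_{m\in\mathbb{Z}}(\Re(w_0) + mk\pi - \varepsilon, \Re(w_0) + mk\pi + \varepsilon)$. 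As $U$ is connected and this projection already has unbounded real parts in both directions, and $U$ must be connected, the image $\{\Re(w): w \in U\}$ is a connected subset of $\mathbb{R}$ (continuous image of a connected set) that is unbounded above and below, hence equal to $\mathbb{R}$.

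\textbf{Step 3: The ``horizontal segment'' case.}
Suppose $U$ contains a horizontal segment $S = \{a + it_0 : a \in [\alpha,\beta]\}$ with $\beta - \alpha > \pi$. Then $S$ contains two points $z = \alpha + it_0$ and $z + \pi = (\alpha+\pi) + it_0$, both in $U$, so the previous paragraph applies with $k = 1$. This completes the proof.

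The argument is essentially routine once the commutation relation $f_\lambda \circ T = T \circ f_\lambda$ is in hand; the only point needing a little care is the deduction in Step 2 that translation-invariance of the open connected set $U$ under the full lattice $\{mk\pi\}$ forces its real-part projection to be all of $\mathbb{R}$ — which follows because the projection is connected and unbounded in both directions. I anticipate no genuine obstacle here; this lemma is a structural observation rather than a hard theorem.
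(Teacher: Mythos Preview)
Your proof is correct and follows essentially the same approach as the paper: both derive the conjugacy $f_\lambda(z+\pi)=f_\lambda(z)+\pi$, deduce $\pi$-invariance of the Fatou set, and then use that $U+k\pi=U$ to spread $U$ across all real parts. The only cosmetic difference is that the paper takes a curve $\gamma\subset U$ from $z$ to $z+k\pi$ and observes $\bigcup_{n}(\gamma+nk\pi)\subset U$ already has real-part projection $\mathbb{R}$, whereas you use translated discs together with connectedness of the projection---both arguments are equivalent in spirit and difficulty.
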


\begin{proof}
Since $f_{\lambda}(z+\pi)=f_{\lambda}(z)+\pi $,  $f^n_{\lambda}(z+\pi)=f^n_{\lambda}(z)+\pi $ for all $n$. Therefore $z\in\mathcal{F}(f_{\lambda})$ if and only if $z+\pi \in\mathcal{F}(f_{\lambda})$. 
If a Fatou component $U$ contains $z$ as well as $z+k \pi$ for some non-zero integer $k$ then for a curve $\gamma \subset U$ joining and containing  these two points, we have $\cup_{n \in \mathbb{Z} } \gamma+ n \pi \subset U$. Thus $\{\Re(z): z \in \cup_{n \in \mathbb{Z}} \gamma+ n \pi\}=\mathbb{R}$.
\end{proof}
The following describes the behaviour of $f_\lambda$ on some vertical lines. For a vertical line $l$ and a real number $r$, let $l+r=\{z+r: z\in l\}$.

\begin{lem}\label{lines} Let $m $ be an integer and  $l_{m \pi} =\{z: \Re(z)=m \pi\}$ .
	\begin{enumerate}
		\item The function $f_\lambda$ maps the line $l_{m \pi}  $   bijectively onto $l_{m \pi +\Re(\lambda)}$.
		
		\item If $\lambda=k \pi+i \lambda_2$ for some $k \in \mathbb{Z}$ and $\lambda_2 >1$ then $\lim_{n \to \infty} \Im(f_{\lambda}^n (z))=+\infty$ for all $z \in l_{m \pi}$.
	\end{enumerate}	 
\end{lem}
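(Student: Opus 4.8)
The plan is to treat the two parts separately; in each case I would restrict $f_\lambda$ to the vertical line in question, which turns the statement into a question about a single real-variable function.

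For part (1), I would parametrize $l_{m\pi}$ by $z=m\pi+iy$, $y\in\mathbb{R}$. Since $\tan$ has period $\pi$ and $\tan(iy)=i\tanh y$, one has $\tan(m\pi+iy)=i\tanh y$, hence
\[
f_\lambda(m\pi+iy)=\big(m\pi+\Re(\lambda)\big)+i\big(\Im(\lambda)+y+\tanh y\big).
\]
This shows immediately that $f_\lambda(l_{m\pi})\subseteq l_{m\pi+\Re(\lambda)}$. To upgrade this to a bijection onto the whole line it remains to check that $\psi(y):=\Im(\lambda)+y+\tanh y$ is a bijection of $\mathbb{R}$: its derivative $1+\sech^2 y\geq 1>0$ gives injectivity, and $y+\tanh y\to\pm\infty$ as $y\to\pm\infty$ gives surjectivity. (In fact $\psi$ is an increasing homeomorphism of $\mathbb{R}$, but only injectivity and surjectivity are needed.)

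For part (2), the hypothesis $\lambda=k\pi+i\lambda_2$ gives $\Re(\lambda)=k\pi$, so by part (1) the map $f_\lambda$ sends $l_{m\pi}$ onto $l_{(m+k)\pi}$, again a line of the same type. Consequently $\bigcup_{j\in\mathbb{Z}}l_{j\pi}$ is forward invariant, and for $z\in l_{m\pi}$ we have $f_\lambda^n(z)\in l_{(m+nk)\pi}$ for every $n$. Writing $f_\lambda^n(z)=(m+nk)\pi+i y_n$ with $y_0=\Im(z)$, part (1) yields the recursion $y_{n+1}=\lambda_2+y_n+\tanh y_n$. Since $\tanh y_n>-1$ for every real $y_n$ and $\lambda_2>1$, we obtain $y_{n+1}-y_n>\lambda_2-1>0$, hence $y_n\geq y_0+n(\lambda_2-1)\to+\infty$, which is precisely $\lim_{n\to\infty}\Im(f_\lambda^n(z))=+\infty$.

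The main point here is less an obstacle than an observation: when $\Re(\lambda)$ is an integer multiple of $\pi$, the vertical lines $l_{j\pi}$ are permuted by $f_\lambda$, so on this invariant family the two-dimensional dynamics collapses to the one-dimensional real map $y\mapsto\lambda_2+y+\tanh y$; after that, the crude bound $\tanh>-1$ settles part (2) and elementary calculus settles part (1). No delicate estimate or topological input is required. The only care needed is to apply part (1) with $\Re(\lambda)=k\pi$, so that the iterates stay on lines of the form $l_{j\pi}$ — this is exactly where the special form of $\lambda$ is used.
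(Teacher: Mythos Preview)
Your proof is correct and follows essentially the same route as the paper: both reduce to the real map $y\mapsto \Im(\lambda)+y+\tanh y$, establish bijectivity via monotonicity and the limits at $\pm\infty$, and for part~(2) use $\tanh y>-1$ together with $\lambda_2>1$ to force the imaginary parts to $+\infty$. The only cosmetic difference is that the paper phrases the last step as ``no fixed point plus strictly increasing'' while you give the explicit increment bound $y_{n+1}-y_n>\lambda_2-1$; these are the same observation.
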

\begin{proof} For $z=m \pi +iy, f_{\lambda}(z)=\lambda+ m \pi +iy+i \tanh y $. Define $\phi: \mathbb{R} \to \mathbb{R}$ by $\phi(y)=\Im(\lambda)+  y+\tanh y$. This is a strictly increasing function satisfying $\lim_{y \to -\infty} \phi(y)=-\infty$ and $\lim_{y \to \infty} \phi(y)=\infty$. In particular, this is a bijection.
	\begin{enumerate}
		 
\item Since   $\phi(y)$ is a bijection of the real line onto itself,  $f_\lambda$ maps $l_{m \pi}$ bijectively onto  $l_{ m\pi +\Re(\lambda)}$. 
\item  For $\lambda=k \pi+i \lambda_2,~ k \in \mathbb{Z}$ and $\lambda_2 >1$, $\phi(y)=\lambda_2+  y+\tanh y>y$ for all $y$. This (non-existence of any fixed point) along with the strict increasingness of $\phi$ implies that $\lim_{n \to \infty}\phi^n (y) =+\infty$. Since $\Im(f_{\lambda}(m \pi+iy))=\phi(y), \Im(f_{\lambda}^2(m \pi+iy))=\phi^2(y)$ and in general, $\Im(f_{\lambda}^n(m \pi+iy))=\phi^n(y)$ for all $n>0$,  $\lim_{n \to \infty} \Im(f_{\lambda}^n (z))=+\infty$ for all $z \in l_{m \pi}$.
	\end{enumerate}
\end{proof}

To determine all the singular values of $f_\lambda$, let  $\overline{C}$ denote the set $\{\overline{z}: z \in C\}$ whenever  $C $ is a set of complex numbers. Recall that we have assumed $\Im(\lambda)>0$.
\begin{lem}\label{criticalpoint}
	\begin{enumerate}
\item The set of all critical points of $f_\lambda$ is $C \cup \overline{C}$ where $C=\{\frac{\pi}{2}+n \pi  +i \sinh^{-1}1 : n \in \mathbb{Z}\}$. The critical values are $ 	\lambda+\frac{\pi}{2}+n\pi\pm i(\sinh^{-1}1+\sqrt{2})$ where $n \in \mathbb{Z}$.  
		
\item The point at infinity is the only asymptotic value of $f_\lambda$ and there is only one transcendental singularity lying over it.
	\end{enumerate}
\end{lem}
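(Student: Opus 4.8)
The plan is to establish the two assertions separately, both by a direct analysis of $f_\lambda(z)=\lambda+z+\tan z$, whose poles are precisely the simple poles of $\tan$, namely $P=\{\frac{\pi}{2}+k\pi:k\in\mathbb{Z}\}$. The part about critical points and values is essentially a computation: since $f_\lambda'(z)=1+\sec^2 z=2+\tan^2 z$, the critical points are the solutions of $\tan z=\pm i\sqrt 2$. Writing $\tan z=-i\,(e^{2iz}-1)/(e^{2iz}+1)$ and solving the resulting linear equation in $e^{2iz}$ shows $e^{2iz}$ is a negative real number of modulus $3-2\sqrt 2=(\sqrt 2-1)^2$; for $z=x+iy$ this forces $e^{2ix}=-1$, that is, $x\in\frac{\pi}{2}+\pi\mathbb{Z}$, and $e^{-2y}=(\sqrt 2-1)^2$, that is, $y=\ln(\sqrt 2+1)=\sinh^{-1}1$. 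Thus $\tan z=i\sqrt 2$ has solution set $C$, the conjugate equation gives $\overline C$, and evaluating $f_\lambda$ at a point of $C$ (where $\tan$ equals $i\sqrt 2$) gives the critical value $\lambda+\frac{\pi}{2}+k\pi+i(\sinh^{-1}1+\sqrt 2)$, with the conjugate values coming from $\overline C$. Alternatively one may use the formulas for $\Re\tan z$ and $\Im\tan z$ recorded above: $\Re\tan z=0$ forces $\sin 2x=0$, and $\Im\tan z=\pm\sqrt 2$ then forces $\coth y=\pm\sqrt 2$, giving the same roots.

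For the asymptotic values I would argue by contradiction: suppose a curve $\eta(t)\to\infty$ satisfies $f_\lambda(\eta(t))\to a$ with $a$ finite. From $\tan\eta(t)=f_\lambda(\eta(t))-\eta(t)-\lambda$ and $\eta(t)\to\infty$ we get $|\tan\eta(t)|\to\infty$, and since $\tan$ is bounded away from $P$, this forces $\mathrm{dist}(\eta(t),P)\to 0$. But $P$ is discrete with pairwise distances $\pi$, so once $\eta(t)$ lies within $\frac{\pi}{4}$ of $P$ its nearest pole is uniquely determined and locally constant in $t$, hence constant along the connected curve; thus $\eta$ is eventually confined to a disc of radius $\frac{\pi}{4}$, contradicting $\eta(t)\to\infty$. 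Hence $\infty$ is the only asymptotic value.

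For the statement that there is only one transcendental singularity over $\infty$, the plan is to show that for all sufficiently large $R$ the open set $f_\lambda^{-1}(\{|w|>R\})$ has exactly one unbounded component $U_R$, that the $U_R$ nest, and that $\bigcap_R U_R=\emptyset$; every other component is bounded and shrinks to a pole as $R\to\infty$, hence gives an ordinary point rather than a transcendental singularity. The basic estimate is $|\tan z|\le M:=1+\frac{2}{1-e^{-1}}$ on $\{|\Im z|\ge\frac12\}$, obtained from $\tan z=-i+2i/(e^{2iz}+1)$ and $|e^{2iz}|=e^{-2\Im z}$; consequently $\{|\Im z|\ge\frac12,\ |z|>R+|\lambda|+M\}\subseteq f_\lambda^{-1}(\{|w|>R\})$, which contains the two unbounded ``vertical tracts'' at $+i\infty$ and $-i\infty$. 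To show these lie in a single component I would use that near a pole $z_k$ with $|z_k|$ large one has $f_\lambda(z)=(\lambda+z_k)-(z-z_k)^{-1}+O(|z-z_k|)$, so $D(z_k,1)$ minus a small disc around the unique zero of $f_\lambda$ in it is contained in $f_\lambda^{-1}(\{|w|>R\})$ and meets both $\{\Im z\ge\frac12\}$ and $\{\Im z\le-\frac12\}$; hence both tracts, and (after a short verification) every unbounded part of $f_\lambda^{-1}(\{|w|>R\})$, lie in one component $U_R$. Finally $\bigcap_R U_R=\emptyset$ because a non-pole $z_0$ leaves $f_\lambda^{-1}(\{|w|>R\})$ as soon as $R>|f_\lambda(z_0)|$, while for $R$ large the component of $f_\lambda^{-1}(\{|w|>R\})$ containing a pole $z_k$ is contained in a small disc about $z_k$, hence bounded and distinct from $U_R$.

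The computation in the first step and the various elementary estimates are routine; the delicate point, and the main obstacle, is the gluing in the last step. A priori the two tracts of $f_\lambda$ at $\pm i\infty$ (which descend from the two logarithmic singularities of $\tan^{-1}$ over $\pm i$) could give two distinct transcendental singularities of $f_\lambda^{-1}$ over $\infty$; what rules this out is precisely that the large-modulus poles of $f_\lambda$ link the two tracts inside $f_\lambda^{-1}(\{|w|>R\})$, and making that argument, together with the nesting, precise uniformly in $R$ is where the real work lies.
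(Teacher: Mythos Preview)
Your proof is correct and, for part~(1) and the claim that $\infty$ is the only asymptotic value, essentially parallel to the paper's argument; the paper solves $\cos z=\pm i$ where you solve the equivalent $\tan z=\pm i\sqrt2$, and your route to the critical values is in fact slightly cleaner since $\tan=i\sqrt2$ on $C$ is built into your parametrisation. The paper merely asserts that $f_\lambda(\eta(t))\to\infty$ along any path to $\infty$, so your contradiction argument via $\mathrm{dist}(\eta(t),P)\to 0$ is a genuine addition of detail.

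The one substantive difference is in the gluing step for uniqueness of the transcendental singularity. Both arguments reduce to showing that $f_\lambda^{-1}(D)$ has a single unbounded component for every disc $D$ about $\infty$, and both first place an upper and a lower half-plane inside $f_\lambda^{-1}(D)$. To link these, the paper uses the vertical lines $l_{m\pi}=\{\Re z=m\pi\}$: an earlier lemma shows $f_\lambda$ maps $l_{m\pi}$ bijectively onto the translate $l_{m\pi+\Re\lambda}$, and for $|m|$ large this translate lies in $D$, so $l_{m\pi}\subset f_\lambda^{-1}(D)$ connects the two half-planes. Your alternative---using the Laurent expansion at far poles $z_k$ to show that a disc about $z_k$ (with a small disc near the nearby zero removed) lies in $f_\lambda^{-1}(\{|w|>R\})$ and meets both half-planes---is valid but requires more estimation. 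The paper's device is shorter and exploits the exact mapping behaviour on $l_{m\pi}$; yours is more self-contained and would survive perturbations of $\tan$ that destroy that special structure. Finally, the separate verification that $\bigcap_R U_R=\emptyset$ and that bounded components shrink to poles is not strictly needed: once you know the unbounded component of $f_\lambda^{-1}(D)$ is unique for every $D$, the uniqueness of the transcendental singularity over $\infty$ follows directly, and the bounded components automatically correspond to the (algebraic) poles.
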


\begin{proof}
	\begin{enumerate}
		\item 
The solutions of $f'_{\lambda}(z)=0$ are precisely those satisfying $\cos z=i\ \mbox{or}\ -i$. Since $\overline{\cos z}=\cos \overline{z}$ for all $z \in \mathbb{C}$, we have $\cos z=i$ if and only if $\cos \overline{z}=-i$.
		\par 
		Let $\cos z=i$. Then 
	$
		\cos x \cosh y- i \sin x\sinh y=i.$
	As $\cosh y$ is never zero,  $\cos x=0$ and $\sin x\sinh y=-1$. The first equation gives that $x=x_n=\frac{\pi}{2}+n \pi $ for all $n\in\mathbb{Z}$.  If $n$ is odd then $\sin x_{n}=-1$ and $ \sinh y=1$ and, the solution is   $\frac{\pi}{2} +n \pi+ i \sinh^{-1}1$. Similarly for even  $n$, $\sin x_{n}=1,\ \sinh y=-1$
and we have $    \frac{\pi}{2} +n \pi+i \sinh^{-1}(-1)$ as the solution of $\cos z=i$. Taking the complex conjugate of these solutions, the set of all critical points of $f_\lambda$ is now found to be $C \cup \overline{C}$ where $C=\{c_n =\frac{\pi}{2}+n \pi  +i \sinh^{-1}1 : n \in \mathbb{Z}\}$. Since $\tan c_n =i \coth(\sinh^{-1} 1)=i \sqrt{2}$, $f_{\lambda}(c_n)= \lambda+\frac{\pi}{2}+n\pi+i \sinh^{-1} 1+\tan (c_n)=	\lambda+\frac{\pi}{2}+n\pi+i(\sinh^{-1}1+\sqrt{2})$. Similarly $f_{\lambda}(\overline{c_n})= \lambda+\frac{\pi}{2}+n\pi-i(\sinh^{-1}1+\sqrt{2})$.
 
		
		\item

\par Let  $D$ be a disc centered at $\infty$ with respect to the spherical metric. Then there exists a $\delta>0$ such that the
  half-planes  $H_{\delta} =\{z: \Im(z)> \delta\}$ and $\overline{H_{\delta}}=\{z: \Im(z) < -\delta \}$ are  contained in $D$. Since  $H_{\delta}$ is invariant under $f_\lambda$ (as $\Im(\lambda)>0$), $f_{\lambda}^{-1}(D)$ contains  
$H_{\delta}$. Note that if $\Im(z) <-\delta-\Im(\lambda)$ then $\Im(f_\lambda) <-\delta +\Im(\tan z) < -\delta$. In other words, the half-plane $H_{-\delta-\Im(\lambda)}=\{z: \Im(z) < -\delta-\Im(\lambda)\}$ is mapped into $\overline{H_\delta} \subset D$ giving that $H_{-\delta-\Im(\lambda)} \subset f_{\lambda}^{-1}(D)$. Therefore,
 \begin{equation}
\label{halfplanes}
H_\delta \cup H_{ -\delta-\Im(\lambda)} \subset f^{-1}_{\lambda}(D).\end{equation}

The disc $D$ contains the left half-plane $H_\alpha=\{z: \Re(z)< \alpha\}$ for some $\alpha < 0$ and its reflection about the imaginary axis  $-H_{\alpha}=\{ z: \Re(z) > - \alpha\}$.  	There is a natural number $m_0$ (depending on $\alpha$ and $\lambda$) such that the line $l_{m \pi +\Re(\lambda)}=\{z: \Re(z)=m\pi +\Re(\lambda)\}$ is contained in $D$ for all integers $m$ with  $|m|>m_0$. By Lemma~\ref{lines}(1), we have  
\begin{equation}\label{line}
 l_{m \pi} =\{z: \Re(z)=m \pi\} \subset f^{-1}_{\lambda}(D) ~\mbox{ for infinitely many values of}~m.
\end{equation}

Now it follows from Equation(\ref{halfplanes}) and Equation(\ref{line}) that there is a unique unbounded component $D_{-1}$ of $f_{\lambda}^{-1}(D)$ such that each component of $\widehat{\mathbb{C}}\setminus D_{-1}$ is bounded. In other words, for every unbounded curve $\gamma:[0,1)\rightarrow \mathbb{C}$ with $\lim_{t \to 1^-}\gamma(t)=\infty$, $f_\lambda(\gamma(t))$ has an accumulation point at $\infty$. This along with the fact that $\lim_{t\rightarrow 1^-}f_\lambda(\frac{i}{t-1})=\infty$ gives  that $\infty$ is the only asymptotic value of $f_\lambda$. It is also clear that there is only one singularity lying over $\infty$.

	\end{enumerate}
\end{proof}
\begin{rem}\label{criticalpoints-rem}
	\begin{enumerate}
		\item All the critical points are simple, i.e., the local degree of $f_\lambda$ is two at every critical point.
		\item  Note that $C \subset H^+$ and $\overline{C} \subset H^-$. The critical values corresponding to the critical points belonging to $C$ are in $H^+$ whenever $\Im(\lambda)>0$. The other critical values are on the same horizontal line but may not be in $H^+$.
	\end{enumerate}

	\end{rem}

Now we determine some properties of the fixed points of $f_\lambda$.
\begin{lem}\label{multiplierofthefixedpoints}
	For each $\lambda \neq i$ with $\Im(\lambda) > 0$, $f_\lambda$ has infinitely many fixed points. Moreover, the following are true.
	\begin{enumerate}
		\item The multiplier of each fixed point is $2+\lambda^2$. In other words, all the fixed points of $f_\lambda$ are attracting,  repelling or indifferent  together.
		\item A point $z$ is a fixed point of $f_\lambda$ if and  only if $z+n \pi$ is so for all $n \in \mathbb{Z}$.
		\item   All the fixed points of $f_\lambda$ are in $H^{-}$ whenever $\Im(\lambda)>0$.
	\end{enumerate}
\end{lem}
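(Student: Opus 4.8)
The fixed points of $f_\lambda$ are the solutions of $z + \tan z = z + \lambda$, i.e. $\tan z = -\lambda$. First I would verify that this equation has infinitely many solutions for $\lambda \neq i$: the map $\tan$ takes every value in $\widehat{\mathbb{C}}$ except $\pm i$ (these being its asymptotic values), and it does so infinitely often because $\tan$ is $\pi$-periodic and nonconstant; so as long as $-\lambda \neq \pm i$, i.e. $\lambda \neq \mp i$, there are infinitely many $z$ with $\tan z = -\lambda$. Since we have assumed $\Im(\lambda) > 0$, the only excluded value in this regime is $\lambda = i$ (the case $-\lambda = i$, i.e. $\lambda = -i$, is already outside our half-plane), which matches the hypothesis.

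For item (1), I would compute $f_\lambda'(z) = 1 + \sec^2 z = 2 + \tan^2 z$. At a fixed point $z_0$ we have $\tan z_0 = -\lambda$, hence $f_\lambda'(z_0) = 2 + (-\lambda)^2 = 2 + \lambda^2$, which is independent of the particular fixed point. Thus all fixed points have the same multiplier $2+\lambda^2$, and they are all attracting, all indifferent, or all repelling according as $|2+\lambda^2|$ is $<1$, $=1$, or $>1$. For item (2), since $\tan(z+n\pi) = \tan z$ for every $n \in \mathbb{Z}$, the equation $\tan z = -\lambda$ is invariant under $z \mapsto z + n\pi$, so $z$ is a fixed point iff $z + n\pi$ is; alternatively this follows from $f_\lambda(z+\pi) = f_\lambda(z) + \pi$ established in Lemma~\ref{symmetry}.

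Item (3) is the one requiring a genuine argument. I would write $z = x + iy$ with $\tan z = -\lambda$, and use the formula $\Im(\tan z) = \dfrac{\sinh 2y}{\cos 2x + \cosh 2y}$ recorded just before Lemma~\ref{symmetry}. The denominator $\cos 2x + \cosh 2y$ is strictly positive for all real $y$ (indeed $\cosh 2y \geq 1 \geq -\cos 2x$, with equality only when $y = 0$ and $\cos 2x = -1$, which would force $\tan z$ undefined — but at a fixed point $\tan z = -\lambda$ is finite, so in fact the denominator is $>0$). Hence $\Im(\tan z)$ has the same sign as $\sinh 2y$, i.e. the same sign as $y$. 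Since $\tan z = -\lambda$ and $\Im(\lambda) > 0$, we get $\Im(\tan z) = -\Im(\lambda) < 0$, forcing $\sinh 2y < 0$, i.e. $y < 0$. Therefore every fixed point lies in $H^-$. The main (and only mild) obstacle is being careful that the denominator is never zero and never negative at an actual fixed point; once that positivity is secured, the sign comparison is immediate.
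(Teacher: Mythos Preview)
Your proof is correct and follows essentially the same approach as the paper: both reduce the fixed-point equation to $\tan z=-\lambda$, compute the multiplier via $f_\lambda'(z)=1+\sec^2 z=2+\tan^2 z$, and invoke the $\pi$-periodicity of $\tan$ for item~(2). For item~(3) the paper merely asserts that the solutions of $\tan z=-\lambda$ lie in $H^-$, whereas you supply the explicit sign argument using $\Im(\tan z)=\sinh 2y/(\cos 2x+\cosh 2y)$; this is a welcome fleshing-out rather than a different route.
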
 

\begin{proof}
	\begin{enumerate}
		\item The fixed points  of $f_\lambda$ are the solutions of $\tan z=-\lambda$. Since $\lambda \neq i$, there are infinitely many fixed points. The multiplier of each fixed point is $f'_{\lambda}(z)=1+\sec^2 z =2+\lambda^2$. It  depends on the value of $\lambda$ but not on any particular fixed point. 
		All the fixed points are attracting, repelling or indifferent if and only if $|2+\lambda^2|<1,~>1~\mbox{or}~=1$ respectively.
		\item This follows from the fact that $\tan z$ is $\pi-$periodic.
		\item The is so because all the solutions of $\tan z=-\lambda, \Im(\lambda)>0$ are in $H^{-}$.
		
	\end{enumerate}
	
	%
\end{proof}

\begin{rem}
The fixed points of $f_\lambda$ are real  if and only if $\lambda$ is real.
\end{rem}

%
%

\section{The proofs}

Here is the proof of Theorem~\ref{cifcupperhalf}. 

\begin{proof}[Proof of Theorem \ref{cifcupperhalf}]
	Note that for all $z \in H^+$, $\Im(f_\lambda(z))>\Im(\lambda)+\Im(z)>0$. The family $\{f^n\}_{n \geq 0}$ is normal in $H^+$ by the Fundamental Normality Test. Since $\Im(f^{n}_\lambda(z))>n \Im(\lambda)+\Im(z)$ for all $n$,  $f^n_\lambda(z)\rightarrow \infty$ as $n\rightarrow \infty$ for all $z \in H^+$. Thus $f_\lambda$ has an invariant Baker domain containing the upper half-plane. This is the primary Fatou component and we denote it by $B$. 
	\par In order to show that $B$ is backward invariant, let 
	$B_{-1}$ be a component of $f_{\lambda}^{-1}(B)$. It is known that if $U$ and $V$ are two Fatou components of a meromorphic function $f$ such that $f: U \to V$ then $V \setminus f(U)$ contains at most two points~(Theorem 1, \cite{herring}). Therefore, $B \setminus f_{\lambda}(B_{-1})$ contains at most two points. 
	Consider $  \epsilon_1 < \epsilon_2   $ and the  horizontal line segment $l=\{w:  \epsilon_1 \leq \Re(w) \leq \epsilon_2 ~\mbox{and}~ \Im(w)=\Im(\lambda)\}  \subset f_{\lambda}(B_{-1})$. Note that  $ l \subset H^+ \subset B$. For $w \in l$, let $z$ be such that $\lambda +z+\tan z=w$. Since $\Im(w)=\Im(\lambda)$, $\Im(z)+\Im(\tan z)=0$. Note that $\Im(z)=0$ if and only if $\Im(\tan z)=0$ and hence, $\Im(z)+\Im(\tan z)=0$ gives that $z$ is a real number. Each real number except the poles of $f_\lambda$ is mapped into  $H^+$ by $f_\lambda$ and therefore $B$ contains the real line except the poles. Thus the full pre-image $f_\lambda^{-1}(l)$ of $l$ is contained in $B$. On the other hand the set $B_{-1}$ intersects $f_\lambda^{-1}(l)$ which gives that $B_{-1}$ intersects $B$.  Thus $B$ is backward invariant. Therefore $B$ is a completely invariant Baker domain.
	
\end{proof}

The following lemma states that the set of all pre-images of every point in the lower half-plane is spread horizontally.
\begin{lem}\label{nopre-image}
For $\Im(\lambda)>0$ and $w \in H^-$, if $f_{\lambda}(z) =w$ then  $\Im(z)  > \Im(w)-\Im(\lambda)$. 
\end{lem}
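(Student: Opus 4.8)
The plan is to pass to imaginary parts in the defining equation and exploit the sign of $\Im(\tan z)$. Writing $z=x+iy$, the hypothesis $f_\lambda(z)=w$ becomes $\lambda+z+\tan z=w$, and taking imaginary parts gives the single scalar identity
\begin{equation}\label{imaginarypart-identity}
y+\Im(\tan z)=\Im(w)-\Im(\lambda).
\end{equation}
So it suffices to show $-\Im(\tan z)>0$ whenever this can fail to be trivially handled; more precisely, I would split into two cases according to the sign of $y$.

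First, if $y\ge 0$, there is nothing to compute: since $w\in H^-$ we have $\Im(w)<0$, and $\Im(\lambda)>0$ by assumption, so $\Im(w)-\Im(\lambda)<0\le y$, which is exactly the desired inequality $\Im(z)>\Im(w)-\Im(\lambda)$.

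The substantive case is $y<0$. Here I would use the formula recorded just before Lemma~\ref{symmetry}, namely $\Im(\tan z)=\dfrac{\sinh 2y}{\cos 2x+\cosh 2y}$. Since $f_\lambda(z)=w$ is finite, $z$ is not a pole; moreover $y<0$ forces $\cosh 2y>1$, so the denominator satisfies $\cos 2x+\cosh 2y>\cos 2x+1\ge 0$, and in fact it is strictly positive because equality $\cos 2x=-1$ would still leave it equal to $\cosh 2y-1>0$. Hence the denominator is positive, while the numerator $\sinh 2y<0$, so $\Im(\tan z)<0$. Substituting into \eqref{imaginarypart-identity} yields $y=\Im(w)-\Im(\lambda)-\Im(\tan z)>\Im(w)-\Im(\lambda)$, completing this case.

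I do not expect any genuine obstacle here: the only point requiring a moment's care is verifying that the denominator $\cos 2x+\cosh 2y$ is strictly positive (equivalently, that $z$ is not a pole of $\tan$), and this is automatic once $y\ne 0$. Combining the two cases gives the claimed bound for every $z$ with $f_\lambda(z)=w\in H^-$.
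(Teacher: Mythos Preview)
Your proof is correct and follows essentially the same approach as the paper: pass to imaginary parts and use that $\Im(\tan z)<0$ when $\Im(z)<0$ to obtain $\Im(w)<\Im(\lambda)+\Im(z)$. The only cosmetic difference is that the paper first observes $z\in H^-$ (since $f_\lambda(H^+)\subset H^+$ and real $z$ give $\Im(f_\lambda(z))=\Im(\lambda)>0$), thereby dispensing with your Case~1 altogether, whereas you handle $y\ge 0$ by the direct inequality $\Im(w)-\Im(\lambda)<0\le y$; either way the substantive computation is identical.
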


\begin{proof}
If $f_{\lambda}(z)=w$ then $z \in H^{-}$ (because $f_{\lambda}(H^+) \subset H^+$) and $\Im(\tan z) <0$.  Now, 
 $\Im(w)= \Im(\lambda)+\Im(z)+\Im(\tan z)<\Im(\lambda)+\Im(z)$. This is what is claimed. 
\end{proof}

Every point in a non-primary Fatou component has negative imaginary part. Note that a Fatou component containing the image or any pre-image of a non-primary Fatou component is also non-primary.  A non-primary Fatou component $U$ is called \textit{horizontally spread} if there is a $\delta < 0$ such that $\{\Re(z): z \in U~\mbox{and }~\Im(z)>\delta\}$ is unbounded. Horizontally spread Fatou components are unbounded in a special way. The existence of a sequence of points $z_n$ in $U$ with $\Im(z_n) \to -\infty$ as $n \to \infty$ is not ruled out and $U$ is allowed to contain even a half-plane of the form $\{z: \Im(z) < \delta'\}$ for some $\delta' <0$. The following describes some useful properties of horizontally spread Fatou components that are to be used in the proof of Theorem~\ref{attractingdomaincomplete}.
\begin{lem}\label{proper}
For  $\Im(\lambda)>0$, let $U$  be a non-primary Fatou component  of $f_\lambda$.
\begin{enumerate}
	\item 
 If $U$ is horizontally spread and is not  invariant under $z \mapsto z+\pi$ then $f_\lambda$ has an invariant Baker domain.
 \item If $U$ is not horizontally spread then 
 $f_\lambda:U \rightarrow U_1$ is a proper map with degree $   1$ or $2$. 
   
\end{enumerate}
\end{lem}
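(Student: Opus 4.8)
The two parts call for rather different arguments, and part~(2) is the routine one.

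\medskip
\emph{Part (2).} Suppose $U$ is not horizontally spread. The first goal is to show that $f_\lambda|_U\colon U\to U_1$ is proper, i.e. that $f_\lambda(z_k)$ leaves every compact subset of $U_1$ whenever $z_k\in U$ and $z_k\to\partial U$ in $\hC$. A subsequential limit $p$ of such a sequence lies in $\partial U\subseteq\jl$. If $p\in\C$ is not a pole, continuity gives $f_\lambda(z_k)\to f_\lambda(p)\in\jl\cap\overline{U_1}=\partial U_1$; if $p$ is a pole, then $f_\lambda(z_k)\to\infty\notin U_1$. Finally, if $p=\infty$: since every non-primary component lies in $H^-$ and $U$ is not horizontally spread, the real parts of points of $U$ with $\Im(z)>-C$ are bounded for each $C>0$, so $|z_k|\to\infty$ forces $\Im(z_k)\to-\infty$; as $\tan(x+iy)\to -i$ when $y\to-\infty$, we get $\Im(f_\lambda(z_k))=\Im(\lambda)+\Im(z_k)+\Im(\tan z_k)\to-\infty$, hence again $f_\lambda(z_k)\to\infty\notin U_1$. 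So $f_\lambda|_U$ is proper, and an open--closed argument identifies $f_\lambda(U)$ with $U_1$, giving a well-defined finite degree $d$. Since $U$ and $U_1$ are simply connected (Remark~\ref{cifc-basic}(2)), $c(U)=c(U_1)=1$, and the Riemann--Hurwitz formula (Lemma~\ref{RH}) becomes $1-2=d(1-2)+n$, i.e. $d=n+1$, where $n$ counts the critical points of $f_\lambda$ in $U$ with multiplicity. All critical points are simple (Remark~\ref{criticalpoints-rem}(1)) and those in $H^-$ are $\overline{C}=\{\overline{c_0}+m\pi:m\in\Z\}$; were $U$ to contain two of them, Lemma~\ref{symmetry} (applied to a curve in $U$ joining them and all its translates) would produce points of $U$ with bounded imaginary part and arbitrarily large real part, contradicting that $U$ is not horizontally spread. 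Hence $n\le1$ and $d\in\{1,2\}$.

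\medskip
\emph{Part (1).} Now suppose $U$ is horizontally spread and $U\ne U+\pi$. The plan is to exploit the $\pi$-equivariance $f_\lambda(z+\pi)=f_\lambda(z)+\pi$, working effectively on the cylinder $\C/\pi\Z$, where $U$ and all its translates $U+k\pi$ coincide. Choose $z_n\in U$ with $\Im(z_n)\in(\delta,0)$ and $\Re(z_n)\to+\infty$ (the case $-\infty$ being symmetric), and write $z_n=w_n+k_n\pi$ with $w_n$ in the compact box $[0,\pi]\times[\delta,0]$ and $k_n\to\infty$; a subsequence of $w_n$ converges to some $w^\ast$, and $w_n\in U-k_n\pi$. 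From $f_\lambda(z_n)=f_\lambda(w_n)+k_n\pi$ one shows $U_1=f_\lambda(U)$ is again horizontally spread (using that $\tan$ is bounded on $[0,\pi]\times[\delta,-\varepsilon]$ for $\varepsilon>0$, and handling separately the case where $w^\ast=\pi/2$ is the pole), and inductively each $U_j$ is horizontally spread. On the part of $U_j$ that stays a fixed distance below $\R$ one has $f_\lambda\approx\mathrm{id}+\lambda$, so such points are shifted up by about $\Im(\lambda)>0$ per step and escape; one then aims to deduce, via normality and the fact that limit functions on non-primary Fatou components are constant, that $f_\lambda^n\to\infty$ on all of $U$, so $U$ is escaping. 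Finally one argues that $U,U_1,U_2,\dots$ cannot all be distinct (otherwise the reduced points in the box accumulate at a Julia point that is simultaneously a limit of points of infinitely many distinct Fatou components, which is excluded), so the orbit is eventually periodic; the periodic component in its cycle is escaping, hence a Baker domain, and a final check using the complete invariance of $B$ and the $\pi$-equivariance yields an invariant one.

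\medskip
The main obstacle is part~(1): the approximation $f_\lambda\approx\mathrm{id}+\lambda$ is only available away from $\R$, whereas the witnesses of horizontal spread of $U$ may approach $\R$ and hence cluster near the poles $\tfrac\pi2+m\pi$, where $\tan$ blows up; proving that such points are nevertheless carried to $\infty$, or ruling out that configuration, is the delicate point. A secondary nuisance is the bookkeeping needed to guarantee that infinitely many translates $U+k\pi$ are distinct Fatou components — equivalently, that $U$ is not $m\pi$-periodic for some $m\ge2$ without being $\pi$-periodic — which is what forces the forward orbit to be eventually periodic. Part~(2), by contrast, reduces to a properness argument plus one application of Riemann--Hurwitz once the hypothesis is used to confine $U$ horizontally above each line $\{\Im(z)=\delta\}$.
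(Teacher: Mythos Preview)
Your Part~(2) is correct and matches the paper's argument: you verify properness by tracking boundary sequences directly, whereas the paper invokes Lemma~\ref{nopre-image} to bound $\Im(z)$ for preimages and cites \cite{bolsch}, but both routes arrive at Riemann--Hurwitz and the observation that two critical points in $U$ would make $U$ horizontally spread.

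Part~(1), however, has a genuine gap, and the obstacles you name are real ones that your plan does not overcome. Your strategy is to follow the forward orbit $U,U_1,U_2,\dots$ and show it is eventually periodic with a Baker domain in the cycle. But the approximation ``$f_\lambda\approx\mathrm{id}+\lambda$'' is wrong in $H^-$: there $\tan z\to -i$, so the vertical drift is $\Im(\lambda)-1$, which is negative for $0<\Im(\lambda)<1$ (precisely the regime of Theorem~\ref{repelling-one}(1)); your ``shifted up and escape'' step fails there. The eventual-periodicity argument is also invalid: a Julia point can certainly be accumulated by points from infinitely many distinct Fatou components, so nothing is ``excluded''. And the inductive step that each $U_j$ remains horizontally spread founders exactly where you say, when the reduced points $w_n$ cluster at the pole $\pi/2$.

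The paper's argument is entirely different and never looks at the forward orbit of $U$. It uses the hypothesis $U\ne U+\pi$ to obtain infinitely many pairwise disjoint translates $U+k\pi$, all horizontally spread. A separation argument using $\mathcal J(f_\lambda)=\partial B$ then forces $\{\Im(z):z\in U\}$ to be bounded (an unbounded curve in $U$ would otherwise separate $B$ from $\partial(U\pm\pi)\subset\partial B$) and $\{\Re(z):z\in U\}$ to be bounded on one side. The accumulation set $\alpha$ of the boundaries $\partial(U+k\pi)$ is an unbounded connected Julia subset with $\Re$-projection all of $\R$ and bounded $\Im$; the component of $\hC\setminus\alpha$ not containing $B$ is a single Fatou component $\tilde B$ containing a lower half-plane $H_\beta$. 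One checks directly that $\tilde B$ is invariant (points with $\Im(z)\ll 0$ map back into $H_\beta$) and is not attracting, parabolic, or Siegel, since $z$ and $z+\pi$ both lie in $\tilde B$ but would have to converge to distinct fixed points. The missing idea in your proposal is to look \emph{sideways} at the translates $U+k\pi$ and extract $\tilde B$ from their geometry, rather than \emph{forward} along the orbit of $U$.
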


\begin{proof}
  
\begin{enumerate}
	\item If $U$ is horizontally spread  then all its $k \pi$-translates $U+k\pi =\{z+ k\pi: z \in U\}$ are also horizontally spread.  Since $U$ is not invariant under $z \mapsto z+\pi$, $U+k \pi \cap U+k' \pi =\emptyset$ for all $k \neq k'$ (by Lemma~\ref{symmetry}). Now, if $\{\Im(z): z \in U\}$ is unbounded then we can find an unbounded Jordan curve $\gamma \subset U \cup \{\infty\}$ which separates the primary Fatou component $B$ from $U'$ where $U'=U+\pi$ or $U -\pi$, i.e., one component  of $\widehat{\mathbb{C}} \setminus \gamma$, say $B'$ contains $B$ whereas the other contains $U'$. This means that  $\mathcal{J}(f_\lambda)=\partial B $ is contained in the closure of $B'$ which contradicts the fact that the other component of $\widehat{\mathbb{C}} \setminus \gamma$ contains some points of the Julia set, namely those on the boundary of $U'$. The two possibilities of this situation are given in the Figure \ref{cross}(a).  Thus, the set  $\{\Im(z): z \in U\}$ and therefore $\{\Im(z): z \in U+k \pi\}$ for all $k$ is bounded. Using the same argument, it can be seen that $\{\Re(z): z \in U\} =\mathbb{R}$ is not possible and there is a $\delta$ such that $\Re(z)>\delta $ or $\Re(z)< \delta $ for all $z \in U$. We assume that $\Re(z)>\delta $ for all $z \in U$. Note that for each $k<0$, $U+k\pi$ is horizontally spread and $\{\Im(z):z \in U+k\pi\}$ is bounded. If $\Re(z)<\delta$ for all $z\in U$ then we have to consider $U+k\pi$ for all $k>0$ (left part of Figure \ref{cross}(b)). The right part of Figure \ref{cross}(b) shows the situation when $\Re(z)>\delta$ for all $z \in U$ and in that case $k<0$ is to be taken. 
	\begin{figure}[H]
		\centering	\subfloat[Image of $U,U+\pi, U-\pi$]
		{\includegraphics[width=2.85in,height=2.4in]{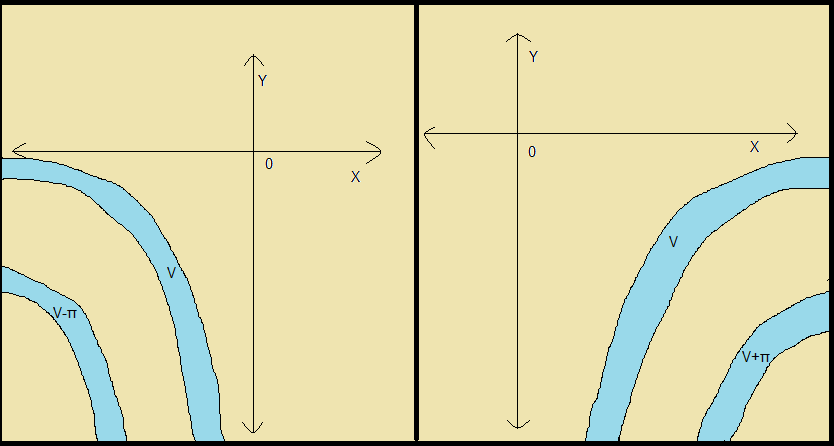}}
		\vspace{.2cm}
		\subfloat[Image of $U+k\pi$'s]	{\includegraphics[width=2.75in,height=2.4in]{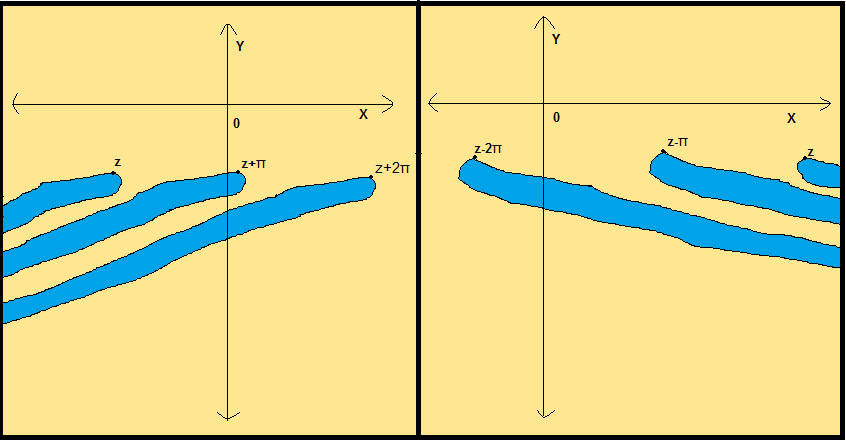}}
		\caption{Horizontally spread Fatou components.}
		\label{cross}
	\end{figure}
	\par  Let $\partial_k$ be the boundary of $U+k \pi$ and $\alpha$ be the set of all the limit points of $\partial_k$, i.e., $\alpha=\{z: ~\mbox{there is a sequence}~ z_{k_n} \in \partial_k~\mbox{such that}~\lim_{n \to \infty}z_{k_n}=z\}$. This $\alpha$ is an unbounded connected subset of the Julia set. Further, $\{\Re(z): z\in \alpha\}=\mathbb{R}$ and $\{\Im(z): z\in \alpha\}$ is bounded. Now one component of $\widehat{\mathbb{C}} \setminus \alpha$ contains the primary Fatou component $B$ and the other component  must be a Fatou component, say $\tilde{B}$. This $\tilde{B}$ contains a lower half-plane $H_\beta=\{z: \Im(z)< \beta\}$ for some $\beta<0$. Since $\Im(f_{\lambda}(z))=\Im(\lambda)+\Im(z)+\Im(\tan z)$, we can choose a $z \in \tilde{B}$ (depending on $\lambda$) with imaginary part sufficiently near to $-\infty$ such that its image is in $\tilde{B}$. For example, take $\beta_1 < \beta - \Im(\lambda)$ such that $\Im(\tan z) \in (-1.1,-0.9)$ for $\Im(z)< \beta_1$ and note that $\Im(f_\lambda (z))< \beta -0.9 < \beta$. This shows that  $\tilde{B}$ is invariant. If $\lim_{n \to \infty}f^n _{\lambda}(z) $ is a fixed point $z_0$ for 
	some $z \in H_{\beta_1} \subset  \tilde{B}$ then  $z+\pi \in H_{\beta_1}$ and $\lim_{n \to \infty}f^n _{\lambda}(z+\pi) $ is $z_0 +\pi$, which is also a fixed point. This cannot be true if $\tilde{B}$ is either an attracting domain or a parabolic domain. Similarly, it can  be seen that it is also not a Siegel disc. Therefore $\tilde{B}$ is a Baker domain.  
 	
\item If $U$ is not horizontally spread then it follows from Lemma~\ref{nopre-image} that every point of $U_1$, the Fatou component containing $f_{\lambda}(U)$, has finitely many pre-images in $U$.  Hence  $f_\lambda:U \rightarrow U_1$ is proper (by Theorem 1, \cite{bolsch}). 
Since $U$ and $U_1$ are simply connected (by Remark \ref{cifc-basic}(2)), it follows from the Riemann-Hurwitz formula (Lemma \ref{RH}) that deg $(f_\lambda)|_U =N+1$ where $N$ is the number of critical points of $f_\lambda$ in $U$ counting multiplicity. Since all the critical points of $f_\lambda$ are simple (Remark \ref{criticalpoints-rem}), the number $N$ here is in fact the number of distinct critical points.
\par 
If $U$ contains two critical points then it contains all the critical points (as the Fatou set is $\pi$-invariant and any two consecutive critical points are with the same imaginary part but with real parts differring by $\pi$ (See Lemma~\ref{criticalpoint})) and becomes horizontally spread. Therefore $N=0$ or $1$, and the degree $d$ of $f_\lambda: U \to U_1$ is  $1$ or $2$ respectively.

\end{enumerate}

\end{proof}
\begin{rem}\label{horizontally-bounded}
	If an unbounded Fatou component $U$ is not horizontally spread then $\{\Im(z): z \in U\}$ is unbounded but for every $z_0\in U$,  $\{\Re(z): z \in U ~\mbox{and}~\Im(z)>\Im(z_0)\}$ is bounded. 
\end{rem}
For proving Theorem~\ref{attractingdomaincomplete}, we also need the following.
\begin{lem}\label{nowanderingTHM}
Let $f_\lambda$ be  a topologically hyperbolic map for some $\lambda$ and $f_\lambda$ have a wandering domain $W$. Then there is an $n \geq 0$ such that  $W_n\cap P(f_\lambda) \neq \emptyset$. 
\end{lem}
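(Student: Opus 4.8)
The plan is to argue by contradiction, exploiting the behaviour of $f_\lambda$ near the essential singularity $-i\infty$. Suppose $W$ is a wandering domain of a topologically hyperbolic $f_\lambda$ with $W_n\cap P(f_\lambda)=\emptyset$ for every $n\ge 0$, and fix a point $z_0\in W$. First I would apply Lemma~\ref{wandeingarbitrarylagredisc} to the compact set $K=\{z_0\}$: it yields, for each $r>0$, an index $n_0(r)$ with $D(f_\lambda^n(z_0),r)\subseteq W_n$ for all $n\ge n_0(r)$. Since $W$ is wandering, no $W_n$ can equal the completely invariant component $B$, so every $W_n$ is non-primary; and because $B$ contains the upper half plane together with the real axis minus the poles (see the proof of Theorem~\ref{cifcupperhalf}), every non-primary component lies in $H^-$. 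Hence $D(f_\lambda^n(z_0),r)\subseteq H^-$, which forces $\Im(f_\lambda^n(z_0))\le -r$ for $n\ge n_0(r)$; since $r$ is arbitrary, $\Im(f_\lambda^n(z_0))\to-\infty$ as $n\to\infty$.

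Next I would use that $\Im(f_\lambda(z))=\Im(\lambda)+\Im(z)+\Im(\tan z)$ and that $\Im(\tan(x+iy))\to-1$ as $y\to-\infty$ uniformly in $x$, so that in a deep half plane $f_\lambda$ is close to the translation $z\mapsto z+(\lambda-i)$. If $\Im(\lambda)>1$, choose $M_0$ so large that $\Im(\tan z)>-\Im(\lambda)$ whenever $\Im z<-M_0$; then $\Im(f_\lambda(z))>\Im(z)$ on that half plane. Since $\Im(f_\lambda^n(z_0))\to-\infty$ there is $N$ with $\Im(f_\lambda^n(z_0))<-M_0$ for all $n\ge N$, and then $\Im(f_\lambda^{n+1}(z_0))>\Im(f_\lambda^n(z_0))$ for all $n\ge N$, so the sequence is eventually strictly increasing --- contradicting $\Im(f_\lambda^n(z_0))\to-\infty$. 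If $\Im(\lambda)<1$, choose $M_0$ so large that $\Im(\tan z)<-\tfrac{1+\Im(\lambda)}{2}$ for $\Im z<-M_0$; then $\Im(f_\lambda(z))<\Im(z)-\tfrac{1-\Im(\lambda)}{2}$ there, so $\Omega:=\{z:\Im z<-M_0\}$ is forward invariant and $f_\lambda^n\to\infty$ locally uniformly on $\Omega$. Thus $\Omega$ lies in a single Fatou component $V$, and since $f_\lambda(\Omega)\subseteq\Omega\subseteq V$ the component $V_1$ meets $V$, so $V_1=V$. But $\Im(f_\lambda^n(z_0))\to-\infty$ puts $f_\lambda^n(z_0)\in\Omega\subseteq V$ for all large $n$, so $W_n=V$ and hence $W_{n+1}=V_1=V=W_n$, contradicting that $W$ is wandering.

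The hard part, I expect, will be the borderline case $\Im(\lambda)=1$, where the drift $\lambda-i=\Re(\lambda)$ of $f_\lambda$ near $-i\infty$ is purely real and the vertical motion of an orbit is only of second order. There one has the sharper estimate $\Im(\tan z+i)=O(e^{2\Im z})$, while $\Re(f_\lambda^n(z_0))$ advances by roughly $\Re(\lambda)$ per step; when $\Re(\lambda)\ne 0$ this horizontal drift makes the increments $\Im(f_\lambda^{n+1}(z_0))-\Im(f_\lambda^n(z_0))$ oscillate in sign, and a summation-by-parts argument should then preclude $\Im(f_\lambda^n(z_0))\to-\infty$; and for $\lambda=i$ the map $f_i$ is known to have no wandering domain. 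So the genuine obstacle is ruling out a slow (logarithmic-rate) escape to $-i\infty$ along the line $\Im(\lambda)=1$; away from that line the argument is routine. I would also note that this borderline case does not occur in the applications of the lemma, where $1<\Im(\lambda)<\sqrt 3$.
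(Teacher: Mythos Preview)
Your argument is correct for $\Im(\lambda)\neq 1$, and the opening move --- using Lemma~\ref{wandeingarbitrarylagredisc} together with $W_n\subset H^-$ to force $\Im f_\lambda^n(z_0)\to-\infty$ --- is fine. But the case $\Im(\lambda)=1$ is, as you concede, left open: the lemma is stated for every $\lambda$ with $\Im(\lambda)>0$, and ``a summation-by-parts argument should then preclude'' is not a proof. (Your remark about the applications is also slightly off: the lemma is invoked in Theorems~\ref{repelling-one} and~\ref{wandeing domain} as well, so $\Im(\lambda)$ ranges well beyond $(1,\sqrt3)$ there --- though it is true that $\Im(\lambda)=1$ never actually arises.)

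The paper's proof takes a different, uniform route that does not split on $\Im(\lambda)$ at all. It shares your first step only as far as obtaining a disc of radius greater than $\pi$ inside $W_n$ for large $n$; then, instead of analysing the vertical drift of orbits near $-i\infty$, it exploits the $\pi$-invariance of the Fatou set (Lemma~\ref{symmetry}): a Fatou component containing a horizontal segment of length $\pi$ must contain a full bi-infinite horizontal line. Hence two consecutive wandering images $W_{n_0}\neq W_{n_0+1}$ each contain such a line $\ell_{n_0},\ell_{n_0+1}\subset H^-$. The open strip between them meets $\partial W_{n_0}\subset\mathcal J(f_\lambda)=\partial B$ and therefore meets $B$; but $B$ also contains $H^+$, and since $\ell_{n_0}\cup\ell_{n_0+1}\cup\{\infty\}\subset\widehat{\mathbb C}\setminus B$ separates the strip from $H^+$, this contradicts the connectedness of $B$. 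This purely topological separation argument uses nothing about $\Im(\lambda)$, so your troublesome borderline simply does not appear. Your dynamical case analysis is more hands-on and gives extra information (e.g.\ it independently recovers the invariant Baker domain $\tilde B$ of Theorem~\ref{repelling-one}(1) when $\Im(\lambda)<1$), but the paper's approach is shorter and complete.
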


\begin{proof}

Suppose on the contrary that  $W$ is a wandering domain of $f_\lambda$ such that $W_n\cap P(f_\lambda)=\emptyset$ for all $n \geq 0$. Since $f_\lambda$ is topologically hyperbolic, it follows from Lemma \ref{wandeingarbitrarylagredisc} that there exists an $n_0$ such that for all $n\geq n_0$, $W_n$ contains a disc of radius $ \pi$. In particular, $W_n$ contains a horizontal  line segment including its end point with length $\pi$. Since the Fatou set $\mathcal{F}(f_\lambda)$ is $\pi-$invariant (Lemma \ref{symmetry}),  $W_{n}$ contains a horizontal line unbounded in both the directions for all $n \geq n_0$. The horizontal strip bounded by  two such lines $l_{n_0} \subset W_{n_0}$ and  $l_{n_{0} +1} \subset W_{n_0 +1}$ contains a point of the Julia set, namely a point on the boundary of $W_{n_0}$.
 It follows from the fact $\mathcal{J}(f_\lambda)=\partial B$ (by Remark~\ref{cifc-basic}(1)) that  this strip contains a point of $B$. Now  $l_{n_0} \bigcup l_{n_{{0} +1}} \bigcup \{\infty\}$ is a closed curve in $\widehat{\mathbb{C}} \setminus B$ separating $B$. However, this is not posssible as $B$ is connected.\end{proof}

\begin{proof}[Proof of Theorem \ref{attractingdomaincomplete}]
 It follows from Lemma \ref{multiplierofthefixedpoints} that for $|2+\lambda^2|<1$, $f_\lambda$ has infinitely many attracting fixed points. The attracting domains corresponding to these attracting fixed points are distinct.
 \par 
 
 The point at $\infty$ is the only asymptotic value of $f_\lambda$ and is in the Julia set.    
 It follows from the proof of Lemma~\ref{symmetry} that if $c$ is a critical point such that $f_\lambda^n(c)$ converges to an attracting fixed point $z_0$ then $\lim_{n \to \infty}f_\lambda^n(c+k\pi)= z_0+k\pi$ for each $k \in \mathbb{Z}$. Recall that $z_0 +\pi k $ is an attracting fixed point if and only if $z_0$ is so. Note that every critical point of $f_\lambda$ in the lower half-plane is of the form  $ c+k\pi$ for some $k \in \mathbb{Z}$. Since each invariant attracting domain contains a critical point, each critical point in the lower half-plane is in an invariant attracting domain. Further, the distance between the forward orbit of each such critical point from the Julia set is the same and that is a positive real number. Hence  $d(\overline{P(f)}\cap  H^-, \mathcal{J}(f_\lambda)\cap \mathbb{C})>0$. It follows from Remark \ref{cifc-basic}(3) that $d(\overline{P(f)}\cap  H^+, \mathcal{J}(f_\lambda)\cap \mathbb{C})>0$.  Thus $f_\lambda$  is a topologically hyperbolic map for $|2+\lambda^2|<1$.
 
\begin{enumerate}

\item    Let $U$ be an invariant attracting domain. If $U$ is horizontally spread then $f_\lambda$ has an invariant Baker domain $\tilde{B}$ containing a lower half-plane by Lemma~\ref{proper}. 
If  there is a $\delta<0$ such that $ \Im(f_{\lambda}^{n_k}(z)) > \delta$ for some subsequence $n_k$ and for some $z \in \tilde{B}$ then the topologically hyperbolicity of $f_\lambda$ gives that $\tilde{B}$ contains the disc $D(z_{n_k}, |\delta|) $ for a sufficiently large $k$ (by Lemma~\ref{wandeingarbitrarylagredisc}). However  $D(z_{n_k}, |\delta|) $ contains a real number and that is either a pole or belongs to $B$. None of this can be true. Therefore,
\begin{equation} \label{top}
 ~\mbox{for each}~\delta<0~\mbox{ there is an }~n_\delta~\mbox{ such that }~\Im(f_{\lambda}^n (z)) \leq \delta~\mbox{ for all }~n >n_\delta. 
 \end{equation}

Now, 
choose a suitable  $\delta_0 <0$ such that $\Im(\tan z ) > -\Im(\lambda)$ for all $z$ with $\Im(z) < \delta_0$. This is possible because $\tan z \to -i$ as $\Im(z) \to -\infty$ and $-\sqrt{3}< -\Im(\lambda) <-1$. For such a $z$, let $z_n=f_{\lambda}^n(z)$ and observe that $\Im(z_1) >\Im(z)$. If $n_0$ is such that $\Im(z_n)<\delta_0$ for all $n >n_0$ then $\{\Im(z_n)\}_{n>n_0} $ is strictly increasing and bounded above by $\delta_0$. This sequence converges to some number less than or equal to $\delta_0$, which is a contradiction to Equation(\ref{top}) for a  $\delta< \delta_0$. Thus, the attracting domain $U$ is not horizontally spread. 
\par By Lemma~\ref{proper}(2), $f_\lambda: U \to U$ is a proper map of degree $1$ or $2$.  Since $U$ contains exactly one critical point of $f_\lambda$  by Lemma~\ref{singularvalues}, it follows from the Riemann-Hurwitz formula that the degree of $f_\lambda: U \to U$ is $2$.
 
\par
It follows from Theorem~\ref{accesstoinfty} that the number of invariant accesses from $U$ to its boundary points is $1,2$ or $3$. Further, each of these boundary points is either a weakly repelling fixed point or $\infty$. Since $f_\lambda$ has no weakly repelling fixed point, all these accesses are to $\infty$. Now, if there are more than one access to $\infty$ then for two curves $\gamma_1, \gamma_2$ in $U$ with a common starting point and landing at $\infty$, each component of $\hC \setminus (\gamma_1 \cup \gamma_2)$ would intersect the boundary of $U$. This is not possible as $\partial U \subset \partial B$. Thus there is exactly one invariant access from $U$ to $\infty$. In particular, $U$ is unbounded.

\par As $U$ is unbounded but not horizontally spread, it follows from Remark~\ref{horizontally-bounded} that $\{\Im(z): z \in U\}$ is unbounded but for every $z_0\in U$,  $\{\Re(z): z \in U ~\mbox{and}~\Im(z)>\Im(z_0)\}$ is bounded.

\item 
 The existence of any attracting domain  with period more than $1$ or any parabolic domain is  ruled out by Lemma~\ref{singularvalues}. Also by the same lemma,  $f_\lambda$ has neither any Siegel disc nor any Herman ring. The non-existence of any Baker domain (other than $B$)  or any wandering domain remains to be looked into. 

\par
Let $V$ be a $p$-periodic Baker domain of $f_\lambda$ such that $\lim_{n \to \infty}f^{np}_\lambda(z) = \infty$ uniformly  on $  V$. Since $f_\lambda$ is topologically hyperbolic, it follows from Lemma \ref{wandeingarbitrarylagredisc} that $V$ contains a disc of radius more that $\pi$. Since $\mathcal{F}(f_\lambda)$ is $\pi$-invariant (Lemma \ref{symmetry}), $V$ contains a  horizontal line which is unbounded in both the directions. This line separates $\mathbb{C} \cap \partial B$ from the boundary of each invariant attracting  domain since $\{\Im(z): z\in U\}$ is unbounded. Again $\partial V \subset \partial B$ implies that $V$ contains a half-plane of the form $\{z: \Im(z) < M<0\}$. But this is not true as there is a sequence of points in the invariant attracting domain whose imaginary parts tends to $-\infty$. Thus $f_\lambda$ does not have any Baker domain. 

\par 
There cannot be any wandering domain of $f_\lambda$  by Lemma \ref{nowanderingTHM}. 

\par 
\end{enumerate}
\end{proof}

\begin{rem}\label{translationofad}(Translation of attracting domains)
	For $|2+\lambda^2|<1$, the set of all (invariant) attracting domains of $f_\lambda$ can be enumerated in a specific way. In fact, if $V$ is an attracting domain of $f_\lambda$ corresponding to an attracting fixed point $z_0$ then for each $k\in \mathbb{Z}$, $V+k\pi$ is an invariant attracting domain corresponding to the attracting fixed point $z_0+k\pi$. Note that if $f^n_\lambda(z)\rightarrow z_0$ for $z\in V$ then $f^n_\lambda(z+k\pi)\rightarrow z_0+k\pi$. 
\end{rem}

Now the proof of Theorem~\ref{repelling-one} is presented.

\begin{proof}[Proof of Theorem \ref{repelling-one}]
\begin{enumerate}
\item Let $0< \Im(\lambda)<1$.
Since $\lim_{\Im(z) \to -\infty} \Im(\tan z)=-1$, choose $\delta<0$ such that the image of  $H_\delta= \{z: \Im(z)< \delta\}$ under $\tan z$ is contained in the half-plane $\{z: \Im(z) <\Im(-\lambda) \}$. This is also true for all smaller values of $\delta$. Then the image of $H_\delta$ under $z +\tan z$ is contained in $\{z: \Im(z) < \Im(-\lambda)+\delta\}$ and consequently, $f_{\lambda} (H_\delta) \subset H_{\delta}$. By the Fundamental Normality Test, the half-plane $H_\delta$ is contained in the Fatou set of $f_\lambda$. The Fatou component containing $H_\delta$, call it   $\tilde{B}$, is invariant. This Fatou component $\tilde{B}$ is simply connected by Remark~\ref{cifc-basic}(1). In particular, it is not a Herman ring. If an invariant Fatou component is a  Siegel disc, an attracting domain  or a parabolic domain then its closure contains a non-repelling fixed point. 
Since all the fixed points of $f_\lambda$ are repelling by Remark~\ref{lambda}, $\tilde{B}$ can neither be a Siegel disc, an attracting domain nor a parabolic domain. Thus,  $\tilde{B}$ is an invariant Baker domain.

Note that each critical point with positive imaginary part is contained in $B$.

 Let $\lambda=k \pi +i \lambda_2$ for some  $k \in \mathbb{Z}$ and $\lambda_2 >0$. If  $m$ is an integer then
 \begin{equation}\label{verticalline-2}
  f_{\lambda} (  m \pi +\frac{\pi}{2} +iy ) =k\pi   + m \pi  +\frac{\pi}{2}+i(\lambda_2+y+\coth y).
 \end{equation}  
 Here  $0< \lambda_2 <1$. 
 Let $L_{m \pi} =\{m\pi+ \frac{\pi}{2}+iy: y<0\}$  and $L_{(m+k)\pi}=k \pi +L_{m \pi}$. Then $f_\lambda (L_{m \pi}) \subset L_{(m+k)\pi}$ and $f^n _{\lambda}(L_{m\pi}) \subseteq L_{(m+kn)\pi}$ for all $n \geq 1$. For every $m$ and $z \in L_{m\pi}$, the sequence of real parts of $f_{\lambda}^n(z)$ tends to $\infty$ (or $-\infty$) as $n \to \infty$ when $k>0$ (or $k<0$ respectively).  We are to show that, \begin{equation}\label{claim}
 \lim_{n \to \infty} \Im(f_{\lambda}^n (z)) = -\infty ~\mbox{ for~ every}~ z \in L_{m\pi}. \end{equation}
  
For this, consider $\phi: (-\infty, 0) \to (-\infty, 0)$ defined by $\phi(y) =\lambda_2 +y +\coth y$ where $0<\lambda_2<1$. It is clear that  for $z \in L_{m \pi}$, $\Im(f_{\lambda} ^2 (z))=\phi^2 (y)$ and in general $\Im(f_{\lambda}^n (z)) = \phi^n(y) $ for all $n>0$.  Our claim (\ref{claim}) will be proved by showing that $\lim_{n \to \infty} \phi^n (y)=-\infty$ for all $y<0$. Since $\phi'(y) =1- \text{cosech}^2 (y)$, $\phi$ has a unique critical point and that is $y_0=-\sinh^{-1}1$. Further, it increases in $(-\infty, y_0)$, attains its maximum at $y_0$ and then decreases. Note that 
 $\lim_{y \to 0^-} \phi(y)=-\infty=\lim_{y \to -\infty} \phi(y)$. The image of $(-\infty, 0)$ under $\phi$ is strictly contained in $(-\infty, \phi(y_0))$. Since $\lambda_2 +\coth y_0<0, ~\phi(y_0)=\lambda_2+y_0+\coth y_0< y_0$ and  $\phi$ is strictly increasing in  $(-\infty, y_0)$, we have $\phi^{n}(y_0) \to -\infty$ as $n \to \infty$. This gives that $\lim_{n \to \infty} \phi^{n}(y) =-\infty$ for all $y<y_0$. Thus $\lim_{n \to \infty} \phi^{n}(y)=-\infty$ for all $y<0$.
 \par 
Since each critical point $c$ in the lower half-plane belongs to  $L_{m \pi}$ for some integer $m$, it follows from Equation (\ref{claim}) that $\lim_{n \to \infty} \Im(f_{\lambda}^{n}(c))=-\infty$. Note that $L_{m \pi} \subset \tilde{B}$ for all $m$ and in particular, $\tilde{B}$ contains  all the critical points (with negative imaginary part) and their forward orbits. Further, it is evident that $d(\overline{P(f)}\cap \tilde{ B}, \mathcal{J}(f_\lambda)\cap \mathbb{C})>0$.  This along with Remark \ref{cifc-basic}(3) prove that $f_\lambda$ is topologically hyperbolic. 
\par By the similar argument as used in Theorem \ref{attractingdomaincomplete}(2) and Lemma \ref{nowanderingTHM} we conclude that $f_\lambda$ does not have any non-primary periodic Fatou component other than $\tilde{B}$ or any wandering domain.

Every pole is of the form $m \pi +\frac{\pi}{2}$ and is an end point of $L_{m\pi}$ for some $m$. This gives that the boundary of $\tilde{B}$ contains a pole. As $\tilde{B}$ is simply connected, the Julia component (i.e., a maximally connected subset of the Julia set) containing a pole is unbounded.  If there  is a multiply connected Fatou component $V$ of a general meromorphic function then consider  a Jordan curve which is not contractible in $V$.  Arguing as in Lemma~1(\cite{tk-zheng}),  one finds that some iterated (forward) image of this curve surrounds a pole. This means that there is a bounded Julia component  containing a pole,  which is not possible. Thus the primary Fatou component $B$ and hence all the Fatou components are simply connected. Therefore, the Julia set of $f_\lambda$ is connected whenever $\lambda =k \pi  +i \lambda_2$ for $0< \lambda_2 <1$.

\item
By Lemma~\ref{criticalpoint}, the critical values of $f_\lambda$ corresponding to the critical points in the lower half-plane are $\lambda+\frac{\pi}{2}+n\pi -i(\sqrt{2}+\sinh^{-1}1)$ where $n$ is an integer.
For $\Im(\lambda)  > \sqrt{2}+\sinh^{-1} 1 \approx 2.295$, the imaginary part of each such critical value is non-negative. Hence all these critical values are in the primary component $B$. Thus $B$ contains all the critical values of the function. Consequently, there is no attracting domain, parabolic domain, Siegel disc or Herman ring in the Fatou set of $f_\lambda$ by Lemma~\ref{singularvalues}. 
\par
Clearly, $f_\lambda$ is topologically hyperbolic.  By Lemma~\ref{nowanderingTHM}, $f_\lambda$ has no wandering domain.
 Let  $f_\lambda$ have a non-primary $p-$periodic Baker domain and $z$ be a point in it.  Without loss of generality assume that  $\lim_{n \to \infty} z_n =\infty$ where $z_n =f_{\lambda}^{np} (z)$. If  there is a $\delta<0$ such that $ \Im(z_{n_k}) > \delta$ for some subsequence $n_k$ then the topologically hyperbolicity of $f_\lambda$ gives that the assumed Baker domain contains the disc $D(z_{n_k}, |\delta|) $ for a sufficiently large $k$ (by Lemma~\ref{wandeingarbitrarylagredisc}). However  $D(z_{n_k}, |\delta|) $ contains a real number and that is either a pole or belongs to $B$. None of this can be true. Therefore, for each $\delta<0$ there is an $n_0$ such that $\Im(z_n) < \delta$ for all $n >n_0$. In other words,  \begin{equation}\label{limit}
 \Im(z_{n}) \to -\infty ~\mbox{as}~n \to \infty.
 \end{equation} 
   Now, choose a sufficiently large $n_0$ such that $\Im(\tan z_n) > -2$ for all $n >n_0$. This is because $\tan z \to -i$ as $\Im(z) \to -\infty$. Since $\Im(\lambda)>2$, we have  $\Im(z_{n+1})=\Im(\lambda)+\Im(z_n)+\Im(\tan z_n) >\Im(z_n)$ for all $n>n_0$.  This is a contradiction to (\ref{limit}).
 Thus $f_\lambda$ does not have any Baker domain. 
  Therefore $B$ is the only Fatou component of $f_\lambda$ for $\Im(\lambda)  >\sqrt{2}+ \sinh^{-1} 1 $.
 
 \par
That the Julia set is disconnected will be established by proving the existence of a bounded component of the Julia set. This is because $\infty \in \mathcal{J}(f_\lambda)$.
 This desired Julia component is going to be the one containing a pole of the function.
 \par
 Since the Fatou set is connected, no Julia component separates the plane, i.e., its complement is connected.
 Let $J$ be a connected subset of $\mathcal{J}(f_\lambda) \cap \mathbb{C}$ containing a pole. If $J$ contains another pole then by Lemma~\ref{symmetry}, it contains all the poles of $f_\lambda$ and then it separates the plane. However this is not possible implying that $J$ contains exactly one pole, say $z_0$. Let $J_0 $ be a connected subset  of $J \setminus \{z_0\}$. Then $f_\lambda (J_0) \subset H^-$ and all the critical values of $f_\lambda$ are in $H^+$. Take a point $z' \in J_0$ and consider a branch $g$ of $f_{\lambda}^{-1}$ defined in a neighborhood of $f_{\lambda}(z')$  such that  $g(f_\lambda (z'))=z'$. This $g$ can be analytically continued to the whole of $H^{-}$ by the Monodromy theorem. In particular, $g$ is analytically defined in a simply connected domain in $H^-$ containing $f_{\lambda}(J_0)$. In other words, the function $f_\lambda$ is one-one on $J_0$. 
 \par Now assuming that $J_0$  is unbounded, consider two connected disjoint subsets $J_{z_0}$ and $J_\infty$ of $J_0$ containing $z_0$ and $\infty$ in their closures respectively. Observe that $f_{\lambda} (J_{z_0}) $ and $ f_{\lambda} (J_\infty) $ are both unbounded and connected. Further  $f_{\lambda} (J_{z_0})\cap f_{\lambda} (J_\infty) =\emptyset$. Now $f_{\lambda}(J_0)$ is a conected subset of $\mathcal{J}(f_\lambda) \cap \mathbb{C}$ containing two disjoint  and connected subsets, each of which is unbounded. Thus $f_{\lambda}(J_0)$ and hence the Julia component containing it, separates the plane. This is a contradiction.  This proves that every  connected subset  of $J \setminus \{z_0\}$ is bounded.  Therefore $J$ is bounded and the proof completes.
\end{enumerate}
 \end{proof}
 Here is a remark on $f_\lambda$ for $\lambda$ with real part different from any integral multiple of $\pi$.
 \begin{rem}
For $0<\Im(\lambda)<1$, consider the critical point $c_0=\frac{\pi}{2}-i \sinh^{-1}1 \in \overline{C}$ of $f_\lambda$. Now $f_\lambda(c_0)=\lambda +c_0+\tan(\frac{\pi}{2}-i \sinh^{-1}1)=\lambda +c_0+\cot(i \sinh^{-1}1)=\lambda +c_0-i\coth(\sinh^{-1}1)$. This gives that $\Im(f_{\lambda}(c_0)) =\Im(\lambda)+\Im(c_0)-\coth(\sinh^{-1}1)$.  Since $\coth(x)>1$ for all $x>0$, $\Im(\lambda)-\coth(\sinh^{-1}1)<0$ giving that $\Im(f_\lambda(c_0)) < \Im(c_0)$. It now follows from Lemma~\ref{criticalpoint} and Lemma~\ref{symmetry} that   $\Im(f_\lambda(c)) < \Im(c)$ for all $c \in \overline{C}$. However, this argument seems to fail to conclude anything about the iterated images of the critical values.
 \end{rem}

%
%
%
%

In addition to give a proof of  Theorem \ref{wandeing domain} theoretically, we also provide a proof by numerically estimating the regions contained in the wandering domains.\\
\begin{proof}[Proof of Theorem \ref{wandeing domain}] For $\lambda_0=i\frac{\pi}{2}$, we have  $|2+\lambda_{0}^2|<1$. By Theorem~\ref{attractingdomaincomplete}, the Fatou set of  $f_{\lambda_0}$ contains infinitely many invariant attracting domains along with the primary Fatou component $B$. For each natural number $k$ and $\lambda_k=k\pi+\lambda_0$, $f_{\lambda_k}(z)=k\pi+\lambda_0+z+\tan(z)=k\pi+f_{\lambda_{0}}(z),~f^2_{\lambda_k}(z)=k\pi+\lambda_0+k\pi+f_{\lambda_{0}}(z)+\tan(k\pi+f_{\lambda_{0}}(z))=2k\pi+f^2_{\lambda_{0}}(z)$, and in general, 
\begin{equation}\label{new}
f^n_{\lambda_k}(z)=nk\pi+f^n_{\lambda_{0}}(z) ~\mbox{for all}~ n\geq 1 ~\mbox{and for all} ~z\in \mathbb{C}.
\end{equation}
This implies that $z$ is contained in the Fatou set of $f_{\lambda_{0}}$ if and only if it is in the Fatou set of $f_{\lambda_{k}}$ for each natural number $k$. 

Though the Fatou set of $f_{\lambda_k}$ and that of $f_{\lambda_{0}}$ are identical, their iterative behaviour on their respective Fatou components differ. It follows from Equation (\ref{new}) that the primary Fatou component of $f_{\lambda_{k}}$ is the same as that of $f_{\lambda_{0}}$ (see Theorem \ref{cifcupperhalf}) and Remark \ref{cifc-basic}(3) holds for $f_{\lambda_k}$. Now we discuss the nature of an attracting domain of $f_{\lambda_{0}}$ as a Fatou component of $f_{\lambda_k}$. Let $V$ be an invariant attracting domain of $f_{\lambda_{0}}$ corresponding to a fixed point $z_0$ and $z\in V$. Then 
\begin{equation}\label{escaping}
f^n_{\lambda_{0}}(z)\rightarrow z_0 ~\mbox{and}~  f^n_{\lambda_k}(z)=nk\pi+f^n_{\lambda_{0}}(z)\rightarrow \infty ~\mbox{as}~ n\rightarrow \infty. 
\end{equation}
Note that for each $n$, $f^n_{\lambda_k}(z)$ is contained in different Fatou components (Remark \ref{translationofad}). Thus $V$ is an wandering domain of $f_{\lambda_k}$. As $f_{\lambda_k}$ maps $V$ into $V+k\pi$ (Remark \ref{translationofad}), $V+\pi$ is mapped into $V+(k+1)\pi$. Hence $f_{\lambda_k}$ has $k$ many wandering domains with different grand orbits, namely $V, V+\pi, V+2 \pi,\cdots, V+(k-1)\pi$. 
\begin{enumerate}
	\item It follows from Equation (\ref{escaping}) that these wandering domains are escaping.
	\item Since  $V+n\pi$ for  each $n \in \mathbb{Z}$, contains a critical point of $f_{\lambda_0}$ and the critical points of $f_{\lambda_{0}}$ and that of $f_{\lambda_{k}}$ are the same, each such wandering domain $V+n\pi$ contains a critical point. It cannot contain more than one critical point as each two critical point are separated by a vertical line contained in the primary Fatou component $B$. In fact, it follows from Lemma~\ref{lines}(2) that for  all $z \in l_{m \pi}$, $\lim_{n \to \infty} \Im(f_{\lambda_k}^n (z))=+\infty$  and we have   $l_{m\pi}  \subset B $ for all integers $m$. This gives that no $V+ n\pi$ is horizontally spread for any integer $n$. By Lemma~\ref{proper}(2), $f_{\lambda_k}: V+n \pi \to V + (n+k) \pi$ is proper. Its degree is $2$ by the Riemann-Hurwitz formula. Taking $ W_n =V+n \pi$ and $W_{n+1}=V+ (n+k) \pi$, it is seen that $f_{\lambda_k}: W_n \to W_{n+1}$ is a proper map with degree $2$.
	\item  If $W'$ is a wandering domain in the grand orbit of $W(=W_0=V)$ and is different from each $W_n$ then there is no critical point in $W'$ and the map $f_{\lambda_k}$ is one-one on $W'$ by the Riemann-Hurwitz formula.
\end{enumerate} 
It follows from Equation~(\ref{escaping}) that if $c$ is a critical point of $f_{\lambda_k}$ (this is also a critical point of $f_{\lambda_0}$) in the lower half-plane and $f_{\lambda_0}^n (c) \to z_0$ as $n \to \infty$ then $f_{\lambda_k}^n (c) \to \infty$ and $|f_{\lambda_k}^n (c)-\pi n k- z_0| \to 0$ as $n \to \infty$. This is true for all the critical points of $f_{\lambda_k}$ lying in the lower-half-plane by Lemma~\ref{criticalpoint} and Remark~\ref{translationofad}. In other words, though the forward orbits of all such critical points  tend to $\infty$, they do so by maintaining a positive distance from $\mathcal{J}(f_{\lambda_k}) \cap \mathbb{C}$. Thus
$f_{\lambda_k}$ is topologically hyperbolic. Using similar argument as used in Theorem \ref{attractingdomaincomplete}, it can be shown that $f_{\lambda_k}$ does not have any periodic Fatou component except $B$ or any other wandering domain. This completes the proof.
\end{proof}

Now we proceed to prove Theorem \ref{wandeing domain} by estimating the region of the wandering domains. To prove Theorem \ref{wandeing domain} we need two lemmas.

\begin{lem}

\begin{enumerate}\label{invariantlines}
If $\lambda= k\pi+i\frac{\pi}{2}$ for a non-zero integer $k$ then the following are true. 
\item 
 The vertical line $l_{m\pi}=\{z:\Re(z)=m\pi\}$ is contained in $B$ for all integers $m$. 

\item 
The vertical half-line  $l_{m\pi+\frac{\pi}{2}}=\{z:\Re(z)=m\pi+\frac{\pi}{2} ~\mbox{and}~ -\infty < \Im(z)\leq -\sinh^{-1}1\}$ is mapped into the half-line $l^{-}_{(m+k)\pi+\frac{\pi}{2}}=\{z:\Re(z)=(m+k)\pi+\frac{\pi}{2} ~\mbox{and}~ \Im(z)<0\}$ for all integers $m$. 

\item 
None of the critical points in the lower half-plane is contained in $B$.

\end{enumerate}
\end{lem}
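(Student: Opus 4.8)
The plan is to settle (1) and (2) by direct computation and then reduce (3) to a contradiction argument about the geometry of $B$. Fix $\lambda=k\pi+i\frac{\pi}{2}$ with $k\neq 0$, so $\Im(\lambda)=\frac{\pi}{2}>1$. For (1), writing $z=m\pi+iy$ and using $\tan(m\pi+iy)=i\tanh y$ gives $f_\lambda(z)=(m+k)\pi+i\big(\frac{\pi}{2}+y+\tanh y\big)$, so $f_\lambda$ carries $l_{m\pi}$ onto $l_{(m+k)\pi}$ with induced map $\phi(y)=\frac{\pi}{2}+y+\tanh y$ on the imaginary coordinate; since $\tanh y>-1$ we have $\phi'(y)=1+\frac{1}{\cosh^2 y}>0$ and $\phi(y)>y+\frac{\pi}{2}-1>y$ for every $y$, which is exactly the situation of Lemma~\ref{lines}(2) with $\lambda_2=\frac{\pi}{2}$, whence $\Im(f_\lambda^{\,n}(z))=\phi^{\,n}(y)\to+\infty$. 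Thus some forward iterate of $z$ lies in $H^{+}\subset B$, and since $B$ is completely invariant (Theorem~\ref{cifcupperhalf}) this forces $z\in B$; as $z\in l_{m\pi}$ was arbitrary, $l_{m\pi}\subset B$. For (2), with $z=m\pi+\frac{\pi}{2}+iy$, $y\neq 0$, one has $\tan\!\big(\frac{\pi}{2}+iy\big)=i\coth y$, so $f_\lambda(z)=(m+k)\pi+\frac{\pi}{2}+i\,\psi(y)$ with $\psi(y)=\frac{\pi}{2}+y+\coth y$, in agreement with \eqref{verticalline-2}; on $(-\infty,0)$ we have $\psi'(y)=1-\frac{1}{\sinh^2 y}$, so $\psi$ has the single critical point $y_0=-\sinh^{-1}1$, increases on $(-\infty,y_0]$, decreases on $[y_0,0)$, and hence attains its maximum there, with $\psi(y_0)=\frac{\pi}{2}-\sinh^{-1}1-\sqrt{2}<0$ (equivalently $\pi<2\sinh^{-1}1+2\sqrt{2}$). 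Therefore $\psi(y)\le\psi(y_0)<0$ for all $y\le y_0$, which is precisely the assertion that $f_\lambda$ maps $l_{m\pi+\frac{\pi}{2}}\cap\{\Im(z)\le-\sinh^{-1}1\}$ into $l^{-}_{(m+k)\pi+\frac{\pi}{2}}$.

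For (3), recall (Lemma~\ref{criticalpoint}) that the critical points of $f_\lambda$ in $H^{-}$ are the points of $\overline{C}=\{\frac{\pi}{2}+m\pi-i\sinh^{-1}1:m\in\mathbb Z\}$, each being the endpoint at imaginary part $-\sinh^{-1}1$ of one of the half lines treated in (2). The computation in (2) shows $\psi\big((-\infty,0)\big)\subseteq(-\infty,\psi(y_0)]\subset(-\infty,0)$, so the set $E:=\bigcup_m l^{-}_{m\pi+\frac{\pi}{2}}$ is forward invariant under $f_\lambda$ and contains $\overline{C}$; hence the whole forward orbit of any $\overline{c}\in\overline{C}$ stays on these vertical half lines, with $\Re\big(f_\lambda^{\,n}(\overline{c})\big)$ moving off to $\pm\infty$ in steps of $k\pi$ and $\Im\big(f_\lambda^{\,n}(\overline{c})\big)=\psi^{\,n}(y_0)$ converging to the unique solution $y^{*}\in(y_0,0)$ of $\coth y^{*}=-\frac{\pi}{2}$, which is an attracting fixed point of $\psi$ since $\psi'(y^{*})=\frac{8-\pi^{2}}{4}\in(-1,0)$. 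In particular $\Im\big(f_\lambda^{\,n}(\overline{c})\big)<0$ for all $n\ge1$.

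To finish (3) I would argue by contradiction. If some $\overline{c}\in\overline{C}$ lay in $B$, then, $B$ being invariant under $z\mapsto z+\pi$ (it contains $H^{+}$), all of $\overline{C}$ would lie in $B$, and by forward invariance of $B$ so would every critical value; as every singular value of $f_\lambda$ in $\mathbb C$ is a critical value and all such orbits then stay in $B$ and tend to $\infty$ without finite accumulation, one gets $P(f_\lambda)\cap\mathbb C\subset B$, so $f_\lambda$ is topologically hyperbolic. Then Lemma~\ref{singularvalues} excludes a non-primary attracting, parabolic or rotational component, Lemma~\ref{nowanderingTHM} excludes a wandering domain, and — arguing as for the non-existence of Baker domains in the proof of Theorem~\ref{repelling-one}(2), using $\Im(\lambda)=\frac{\pi}{2}>1$ — a non-primary Baker domain as well, so that $\mathcal{F}(f_\lambda)=B$. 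But by (1) all the lines $l_{j\pi}$ lie in $B$, so $\mathbb C\setminus B$ is contained in the union of the pairwise disjoint open lower strips $T_j=\{\,j\pi<\Re(z)<(j+1)\pi,\ \Im(z)<0\,\}$ together with the poles, whereas the forward orbit of $\overline{c}$ is an orbit inside $B$ that passes through infinitely many of the strips $T_j$ in turn while staying at bounded negative height near $y^{*}$; a separation argument of the kind used repeatedly in the paper (a bi-infinite horizontal line in $B$, or a Jordan curve built from $\{\infty\}$ and arcs of two of the lines $l_{j\pi}$ and of the orbit, would disconnect the connected set $B$, using $\mathcal{J}(f_\lambda)=\partial B$) then produces the contradiction. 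The computations in (1) and (2) I expect to be entirely routine; the main difficulty is this closing separation step — pinning down exactly why a horizontally escaping orbit confined to the half lines $l^{-}_{j\pi+\frac{\pi}{2}}$ cannot lie in $B$ given $B\supseteq H^{+}\cup\bigcup_j l_{j\pi}\cup\mathbb R$ and $\mathcal{J}(f_\lambda)=\partial B$ — together with the bookkeeping of which strip each iterate enters.
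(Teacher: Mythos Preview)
Your arguments for (1) and (2) are correct and essentially identical to the paper's; your bound $\psi(y)\le\psi(-\sinh^{-1}1)=\frac{\pi}{2}-\sinh^{-1}1-\sqrt{2}<0$ is slightly sharper than the paper's cruder $\psi(y)<\frac{\pi}{2}-\sinh^{-1}1-1<0$ (using only $\coth y<-1$), but either suffices.

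For (3) your route diverges from the paper's and the acknowledged gap is real. You have in fact carried out the paper's entire computation: you locate the unique attracting fixed point $y^{*}$ of $\psi$ (the paper writes $y_0=\tfrac12\ln\tfrac{\pi-2}{\pi+2}\approx-0.7524$, with multiplier $\psi'(y_0)=2-\tfrac{\pi^2}{4}\in(-1,0)$) and observe $\Im\bigl(f_\lambda^{\,n}(\overline c)\bigr)=\psi^{\,n}(-\sinh^{-1}1)\to y^{*}<0$. The paper stops right there and writes ``hence $c_0$ is not contained in $B$''; it never passes through topological hyperbolicity, Lemma~\ref{nowanderingTHM}, the Baker-domain argument of Theorem~\ref{repelling-one}(2), or any separation of $B$. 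Your detour to ``$\mathcal F(f_\lambda)=B$ and then a Jordan-curve separation'' is therefore unnecessary, and the step you flag as the main difficulty is genuinely problematic: any Jordan curve you assemble from pieces of $l_{j\pi}$ and of the orbit of $\overline c$ lies, under your contradiction hypothesis, \emph{inside} $B$, so it cannot disconnect the connected set $B$; and nothing forces a bi-infinite horizontal line in $B$ at a negative height. No contradiction is visible along those lines.

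What the paper's terse ``hence'' is really using is the $\pi$-invariance $B+\pi=B$: the translated iterates $f_\lambda^{\,n}(c_0)-nk\pi=\frac{\pi}{2}+i\psi^{\,n}(-\sinh^{-1}1)$ would all lie in $B$ and converge to the finite point $w^{*}=\frac{\pi}{2}+iy^{*}$, whereas for any $p\in H^{+}\subset B$ the same translates $f_\lambda^{\,n}(p)-nk\pi$ have imaginary part tending to $+\infty$. Equivalently, on the quotient cylinder $\mathbb C/\pi\mathbb Z$ the induced map has $\tilde B$ as an invariant Fatou component on which the iterates tend to the upper end (seen on $H^{+}$), while the projected orbit of $c_0$ converges to the attracting fixed point $\tilde w^{*}$; a single Fatou component cannot support two distinct constant limits, so $c_0\notin B$. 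This one-sentence justification replaces your entire contradiction scheme.
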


\begin{proof}
\begin{enumerate}
\item 
It follows from Lemma~\ref{lines}(2) that for  all $z \in l_{m \pi}$, $\lim_{n \to \infty} \Im(f_{\lambda}^n (z))=+\infty$. We are done since $l_{m\pi} \cap B \neq \emptyset$ for all integers $m$. 

\item 
For $z \in l_{m\pi+\frac{\pi}{2}}$,  $f_\lambda(z)=k\pi+i\frac{\pi}{2}+m\pi+\frac{\pi}{2}+i\Im(z)+\tan(m\pi+\frac{\pi}{2}+i\Im(z))=(k+m)\pi+\frac{\pi}{2}+i\{\Im(z)+\frac{\pi}{2}+\coth\Im(z)\}$. Note that $-\infty<\coth\Im(z)<-1$ for all $z$ with  $\Im(z)<0$. 
Therefore, $\Im(f_\lambda (z))<-\sinh^{-1}1 + \frac{\pi}{2}-1 <0$ for all $z \in l_{m \pi + \frac{\pi}{2}}$.
 Thus $f_\lambda$ maps $l_{m\pi+\frac{\pi}{2}}$ into $l^{-}_{(m+k)\pi+\frac{\pi}{2}}$.

\item 
The critical points of $f_\lambda$ in the lower half-plane are $m\pi+\frac{\pi}{2}-i\sinh^{-1}1$ for $m \in \mathbb{Z}$. For each  $z \in l_{m\pi+\frac{\pi}{2}}=\{z:\Re(z)=m\pi+\frac{\pi}{2} ~\mbox{and}~\Im(z)\leq -\sinh^{-1}1\}$, $\Re(f_\lambda(z))=(m+k)\pi+\frac{\pi}{2}$
and $\Im(f_\lambda(z))=\frac{\pi}{2}+\Im(z)+\coth(\Im(z))$.
\par
Consider the function $g(y)=\frac{\pi}{2}+y+\coth y$, $y<0$ and $h(y)=g(y)-y$. Note that $\lim_{y \to -\infty}h(y)=\frac{\pi}{2}+\lim_{y \to -\infty}\coth y=\frac{\pi}{2}-1>0$ and $\lim_{y \to 0^-}h(y)=-\infty$. By the Intermediate Value Theorem, there is a negative real number $y_0$ such that $h(y_0)=0$. This $y_0$ is a fixed point of $g(y)$. Since $h'(y)=-\text{cosech}^2 y<0$ for all $y<0$, $y_0$ is unique. Note that $y_0=\frac{1}{2}\ln\frac{\pi-2}{\pi+2}\approx -0.7524$.  Note that $g'(y)=2-\coth^2y$. The multiplier of $y_0$, $g'(y_0)= 2-\coth^2 y_0=2-\frac{\pi^2}{4} \in (-1,0)$ which means that $y_0$ is an attracting. Now $g''(y)=2\coth y ~\text{cosech}^2y<0$ for all $y<0$. Then $g'$ has a unique root and,

\begin{gather}{\label{derivative}} 
 g'(y)
  \begin{cases}
  >0~~~~~~~~ for~ all ~ y< -\sinh^{-1}1\approx -0.8814,\\
  =0~~~~~~~~ for~~ ~y= -\sinh^{-1}1,\\
  <0 ~~~~~~~~ for~ all ~ -\sinh^{-1}1< y<0.\\
  \end{cases}
\end{gather}
 Note that $g([-0.8814,y_0])=[y_0,-0.7248]$ and $g(-0.7248)> -0.8814$. Since $g$ is decreasing in $(-0.8814,0)$, $g([y_0,-0.7248])\subsetneq [-0.8814,y_0]$ and it follows that $g^{n+1}([-0.8814,-0.7248])\subsetneq g^{n}([-0.8814,-0.7248])$ for all $n$. Thus $g^n(y) \rightarrow y_0$ for all $y\in [-0.8814,-0.7248]$.
\par
Since the image of $l_{m\pi+\frac{\pi}{2}}$ is $l^{-}_{(m+k)\pi+\frac{\pi}{2}}$ under $f_\lambda$, $\Im(f_\lambda^n(z))=g^n(\Im(z))$ for all $z\in l_{m\pi+\frac{\pi}{2}}$ and all $n$. Let $c_0=\frac{\pi}{2}-i\sinh^{-1}1$. Note that $f_\lambda(c_0)=k\pi+i\frac{\pi}{2}+\frac{\pi}{2}-i\sinh^{-1}1+\tan(\frac{\pi}{2}-i\sinh^{-1}1)=k\pi+\frac{\pi}{2}+i\{\frac{\pi}{2}-\sinh^{-1}1-\coth(\sinh^{-1}1)\}=k\pi+\frac{\pi}{2}-0.7248i$. As $\Im(f_\lambda(c_0))\in [-0.8814,-0.7248]$ then $\Im(f_\lambda^n(c_0))\rightarrow y_0\approx -0.7524$ and henec $c_0$ is not contained in $B$. It follows from Lemma~\ref{symmetry} that none of the critical points in the lower half-plane is contained in $B$. 

\end{enumerate}
\end{proof}

Here are some estimates of three functions in suitable intervals. 

\begin{lem}\label{estimates}
\begin{enumerate}
	\item 	\label{realhx}
	If $x\leq -0.6658$ then $ 0< \frac{\sin\frac{\pi}{8}}{-\cos\frac{\pi}{8}+\cosh2x} <\frac{\pi}{8}$.
	\item \label{hxmaximum}
	For all $x\leq -0.6658$,  $ \frac{\pi}{2}+x+\frac{\sinh 2x}{-\cos\frac{\pi}{8}+\cosh 2x} \leq-0.6658$.
	\item \label{lastlemma}
	If $m$ is an integer and $|x-(m\pi +\frac{\pi}{2})| \leq \frac{\pi}{16}$ then  $\frac{\pi}{2}-0.6658-\frac{\sinh1.3316}{\cos2x+\cosh1.3316} <-0.6658$.
\end{enumerate}

\end{lem}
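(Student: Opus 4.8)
The three inequalities are all elementary estimates; my plan is to dispose of (1) and (3) by monotonicity together with a single algebraic reduction, and to reserve the real work for (2). Throughout I write $a=\cos\frac{\pi}{8}$ and use that $\cosh$ is even and strictly increasing on $[0,\infty)$, so that $x\le -0.6658$ forces $\cosh 2x\ge \cosh 1.3316$ (note $2\times 0.6658=1.3316$), with $\cosh 1.3316\approx 2.0256>1>a$.

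For (1), the numerator $\sin\frac{\pi}{8}$ is positive and the denominator satisfies $\cosh 2x-a\ge \cosh 1.3316-a>0$, which gives positivity. The quotient is decreasing in $\cosh 2x$, so its maximum over $x\le -0.6658$ occurs as $\cosh 2x\downarrow\cosh 1.3316$, i.e. at $x=-0.6658$; there a direct evaluation gives $\sin\frac{\pi}{8}/(\cosh 1.3316-a)\approx 0.347<\frac{\pi}{8}$, which is the upper bound. For (3), since $\cosh 1.3316>1\ge|\cos 2x|$ the denominator $\cos 2x+\cosh 1.3316$ is positive, so clearing it turns the claim into $\cos 2x<\frac{2}{\pi}\sinh 1.3316-\cosh 1.3316$. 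The hypothesis $|x-(m\pi+\frac{\pi}{2})|\le\frac{\pi}{16}$ lets me write $2x=(2m+1)\pi+\theta$ with $|\theta|\le\frac{\pi}{8}$, whence $\cos 2x=-\cos\theta\le -\cos\frac{\pi}{8}$; it then suffices to verify the pure number inequality $-\cos\frac{\pi}{8}\approx -0.9239<-0.9042\approx\frac{2}{\pi}\sinh 1.3316-\cosh 1.3316$.

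The substantive part is (2). Set
\[
H(x)=\frac{\pi}{2}+x+\frac{\sinh 2x}{\cosh 2x-a},
\]
and I would maximise $H$ over $(-\infty,-0.6658]$. Using $\cosh^2-\sinh^2=1$,
\[
H'(x)=1+\frac{2\,(1-a\cosh 2x)}{(\cosh 2x-a)^2}.
\]
Writing $t=\cosh 2x$, the equation $H'(x)=0$ simplifies to the quadratic $t^2-4at+(a^2+2)=0$, with roots $t=2a\pm\sqrt{3a^2-2}$. Only the larger root $t^\ast=2a+\sqrt{3a^2-2}\approx 2.5965$ satisfies $t\ge\cosh 1.3316$, so in the range $x\le -0.6658$ there is exactly one critical point $x^\ast=-\tfrac12\cosh^{-1}t^\ast\approx -0.8040$. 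Since $H'(-0.6658)<0$ while $H'(x)\to 1>0$ as $x\to-\infty$, the function increases up to $x^\ast$ and decreases afterwards, so $x^\ast$ is the global maximum on the half-line. At $x^\ast$ one has $\sinh 2x^\ast=-\sqrt{(t^\ast)^2-1}$, and evaluating $H(x^\ast)\approx -0.6659\le -0.6658$ finishes the argument; the behaviour $H(x)\to-\infty$ as $x\to-\infty$ confirms there is no escape to larger values.

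The main obstacle is the sharpness of (2): the maximum of $H$ sits below $-0.6658$ by only about $6\times 10^{-5}$, so any lossy bound (for instance replacing $\cosh 2x-a$ by $\cosh 2x$, which yields merely $\frac{\pi}{2}+x+\tanh 2x$) overshoots badly, and the exact critical point $t^\ast$ must be used. Consequently the only delicate point is to carry enough decimal precision for $a=\cos\frac{\pi}{8}$, $\sqrt{3a^2-2}$ and $\cosh^{-1}t^\ast$ so that the final numerical comparison is certified; every other step is a routine computation.
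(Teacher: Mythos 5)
Your overall route coincides with the paper's: monotonicity plus an endpoint evaluation for (1), a critical-point analysis of $H(x)=\frac{\pi}{2}+x+\frac{\sinh 2x}{\cosh 2x-a}$, $a=\cos\frac{\pi}{8}$, for (2), and reduction to the extremal value $\cos 2x=-\cos\frac{\pi}{8}$ for (3). Two of your steps are in fact cleaner than the paper's: you solve $H'(x)=0$ exactly via $t=\cosh 2x$, obtaining the quadratic $t^2-4at+(a^2+2)=0$ and the explicit root $t^{\ast}=2a+\sqrt{3a^2-2}$ (the paper instead shows $h''<0$ on the relevant range and locates the critical point $x_0\approx -0.804$ only ``computationally''), and in (3) you clear the denominator and reduce to the single numerical inequality $-\cos\frac{\pi}{8}<\frac{2}{\pi}\sinh 1.3316-\cosh 1.3316$, where the paper minimizes by calculus. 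Your parts (1) and (3) are correct, and your critical point $x^{\ast}=-\frac12\cosh^{-1}t^{\ast}\approx -0.8040$ agrees with the paper's $x_0$.

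The gap is exactly at the step you yourself flag as delicate: the final numerical comparison in (2) is wrong, and wrong in the direction that sinks the verification. Evaluating at your own critical point gives $H(x^{\ast})\approx -0.66579995$, not $\approx -0.6659$; the maximum does not sit $6\times 10^{-5}$ \emph{below} $-0.6658$, it sits about $5\times 10^{-8}$ \emph{above} it. A direct check at $x=-0.804$ confirms this: $\cosh 1.608\approx 2.59655170$, $\sinh 1.608\approx 2.39626391$, so $H(-0.804)\approx 1.57079633-0.804-2.39626391/1.67267217\approx -0.66579995>-0.6658$. Hence the inequality of part (2) actually fails on a tiny interval around $x^{\ast}$, and your proof does not certify it; your closing sentence about carrying ``enough decimal precision'' is precisely the standard your reported figures fail to meet. (To be fair, the paper's own proof shares this soft spot: it records the maximum as ``$\approx -0.6658$'' and concludes ``$\le -0.6658$,'' which four-decimal rounding cannot justify when the sign of an $O(10^{-8})$ difference decides the claim.) The lemma and its application to the regions $R_m$ in Theorem 1.5 survive if the constant is relaxed slightly, say to $0.6657$: then $\sup_{x\le -0.6657}H(x)=H(x^{\ast})\approx -0.66579995\le -0.6657$, part (1) still gives $0.3473<\frac{\pi}{8}$, and in (3) one still has $\frac{\sinh 2c}{\cosh 2c-\cos(\pi/8)}\approx 1.599>\frac{\pi}{2}$ at $c=0.6657$; but as written, your verification of (2) — and the stated constant itself — does not hold up.
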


\begin{proof}
	\begin{enumerate}
\item For $h(x)=\frac{\sin\frac{\pi}{8}}{-\cos\frac{\pi}{8}+\cosh2x}$, $h'(x)=-2\sin\frac{\pi}{8}\frac{\sinh2x}{(-\cos\frac{\pi}{8}+\cosh2x)^2} >0$ for all $x<0$. The function $h$ is strictly increasing. Further, $\lim_{x \to -\infty}h(x)=0$ and $h(-0.6658)\approx0.3473$. This  gives that $0<h(x)\leq 0.3473<\frac{\pi}{8}$ for $x\leq -0.6658$.
\item 
Let $h(x)=\frac{\pi}{2}+x+\frac{\sinh 2x}{-\cos\frac{\pi}{8}+\cosh 2x} $. Then
$h'(x)=1+2\frac{1-\cosh 2x\cos\frac{\pi}{8}}{(-\cos\frac{\pi}{8}+\cosh 2x)^2}$ and  $h''(x)=4\sinh 2x\frac{\cos^2\frac{\pi}{8}+\cos\frac{\pi}{8}\cosh2x-2}{(-\cos\frac{\pi}{8}+\cosh 2x)^3}$. The function $\cos^2\frac{\pi}{8}+\cos\frac{\pi}{8}\cosh2x-2 $ is a strictly decreasing function with its minimum value approximately equal to  $0.7249$ achieved at $-0.6658$. Thus  $h''(x)<0$ for all $x \leq -0.6658$ giving that $h'$ is a strictly decreasing function. As
$\lim_{x \to -\infty}h'(x)=1$ and $h'(-0.6658)\approx -0.4359$, there exists a unique $x_0 \leq -0.6658$ such that $h'(x_0)=0$. Computationally, it is found that $x_0\approx -0.804$.  This proves that $h$ attains maximum at $x_0$ and the maximum value is $\approx -0.6658$. Thus $h(x)\leq -0.6658$ for all $x \leq -0.6658$.
\item 
Let $h(x)=\frac{\pi}{2}-0.6658-\frac{\sinh1.3316}{\cos2x+\cosh1.3316}$ for   $x \in I_m=\{x:|x-(m\pi +\frac{\pi}{2})| \leq \frac{\pi}{16}\}$.  Then $h'(x)=-2\sinh1.3316\frac{\sin2x}{(\cosh1.3316+\cos2x)^2}$ is $0$ only when  $x=m\pi+\frac{\pi}{2}$.
Further, $h'(x)<0$ for $x< m\pi +\frac{\pi}{2}$ and $h'(x)>0$ for $x>  m\pi +\frac{\pi}{2}$ giving that $h$ attains its minimum at $m\pi+\frac{\pi}{2}$.  As $h(m\pi+\frac{\pi}{2}-\frac{\pi}{16}) =h(m\pi+\frac{\pi}{2}+\frac{\pi}{16})\approx-0.6939$, we have  $h(x)<-0.6939 < -0.6658$ for all $x\in I_m$.

\end{enumerate}
\end{proof}

\begin{proof}[ Proof of Theorem \ref{wandeing domain} (by estimation)]
Let $\lambda= k\pi+ i \frac{\pi}{2}$ for a natural number $k$.
Firstly, we show that certain regions outside the primary Fatou component are in the Fatou set of $f_\lambda$. Consider the region $R_m=\{z: |\Re(z)-(m\pi +\frac{\pi}{2})|<\frac{\pi}{16} ~~\text{and}~\Im(z)\leq -0.6658\}$. Note that $R_m$ does not contain any pole of $f_\lambda$. Our intention is to show that  $f_\lambda(R_m)\subset R_{m+k}$. Let $$l_1=\{z:\Re(z)=m\pi+\frac{\pi}{2}-\frac{\pi}{16}~\text{and}~\Im(z)\leq -0.6658\},$$  $$l_2=\{z:\Re(z)=m\pi+\frac{\pi}{2}+\frac{\pi}{16}~\text{and}~\Im(z)\leq -0.6658\}$$ and $$l_3=\{z:| \Re(z)-(m \pi+\frac{\pi}{2})| \leq \frac{\pi}{16}~\text{and}~\Im(z)=-0.6658\}.$$
 The boundary of $R_m$ is $l_1 \cup l_2 \cup l_3 \cup \{\infty\}$.

For $z \in l_1$,  $\Re (f_\lambda(z))=(k+m)\pi +\frac{\pi}{2} -\frac{\pi}{16} +\Re(\tan z)=(k+m)\pi+\frac{\pi}{2}-\frac{\pi}{16}+\frac{\sin \frac{\pi}{8}}{-\cos\frac{\pi}{8}+\cosh 2\Im(z)}$ and $\Im (f_\lambda(z))=\frac{\pi}{2}+\Im(z)+\frac{\sinh 2\Im(z)}{-\cos\frac{\pi}{8}+\cosh 2\Im(z)}$. It follows from Lemma \ref{estimates}(\ref{realhx}) that $(m+k)\pi+\frac{\pi}{2}-\frac{\pi}{16}\leq \Re(f_\lambda(z))\leq (k+m)\pi+\frac{\pi}{2}+\frac{\pi}{16}$. Similarly,  Lemma \ref{estimates}(\ref{hxmaximum})  gives that  $\Im(f_\lambda(z))\leq-0.6658$  for all $z \in l_1$.

 Now, for $z \in l_2$,  $\Re (f_\lambda(z))=(k+m)\pi+\frac{\pi}{2}+\frac{\pi}{16}-\frac{\sin \frac{\pi}{8}}{-\cos\frac{\pi}{8}+\cosh 2\Im(z)}$ and $\Im (f_\lambda(z))=\frac{\pi}{2}+\Im(z)+\frac{\sinh 2\Im(z)}{-\cos\frac{\pi}{8}+\cosh 2\Im(z)}$.
  By Lemma \ref{estimates}( \ref{realhx}), $(k+m)\pi+\frac{\pi}{2}-\frac{\pi}{16}\leq \Re(f_\lambda(z))\leq (m+k)\pi+\frac{\pi}{2}+\frac{\pi}{16}$. Similarly, Lemma  \ref{estimates}(\ref{hxmaximum}) gives that $\Im(f_\lambda(z))\leq-0.6658$  for all $z \in l_2$ . 
  
 If $z \in l_3$ then  $\Im(f_\lambda(z))=\frac{\pi}{2}+\Im(z)+\Im(\tan z)=\frac{\pi}{2}-0.6658-\frac{\sinh1.3316}{\cos2x+\cosh1.3316}$. It follows from Lemma \ref{estimates}(\ref{lastlemma}) that $\Im(f_\lambda(z))<-0.6658$ for all $z \in l_3$. This strict inequality implies that the points $l_1\cap l_3$ and $l_2\cap l_3$ are mapped to the interior of $R_{m+k}$. Thus $f_\lambda(R_m)\subsetneq R_{m+k}$ (See Figure \ref{RMF}) and $\cup_{n \in \mathbb{N}}   R_{m+nk} $ is invariant under $f_\lambda$ giving that $R_m$ is in the Fatou set of $f_\lambda$ by the Fundamental Normality Test. Since $m$ is arbitrary, the Fatou set of $f_\lambda$ contains $R_m$ for every $m\in \mathbb{Z}$.
 
 Thus $f_\lambda(R_m)\subset R_{m+k}$ and $\cup_{n \in \Z}   R_{m+nk} $ is invariant under $f_\lambda$ giving that $R_m$ is in the Fatou set of $f_\lambda$ for every integer $m$ by the Fundamental Normality Test. 
 
  For each integer  $m$, the line $L_{m}=\{z:\Re(z)=m\pi+\frac{\pi}{2} ~\mbox{and}~ \Im(z)\leq -0.6658\}$ is  contained in $R_m$. Between any two such consecutive lines $L_m$ and $L_{m+1}$, there is a vertical line $l_{(m+1) \pi}$ which is in the primary Fatou component (by Lemma~\ref{invariantlines}(1)). In other words, for $m \neq m'$, the Fatou components containing $R_m$ is different from that containing $R_{m'}$.
  
  \begin{figure}[H]
  	\centering
  	\includegraphics[width=10cm,height=6cm,angle=0]{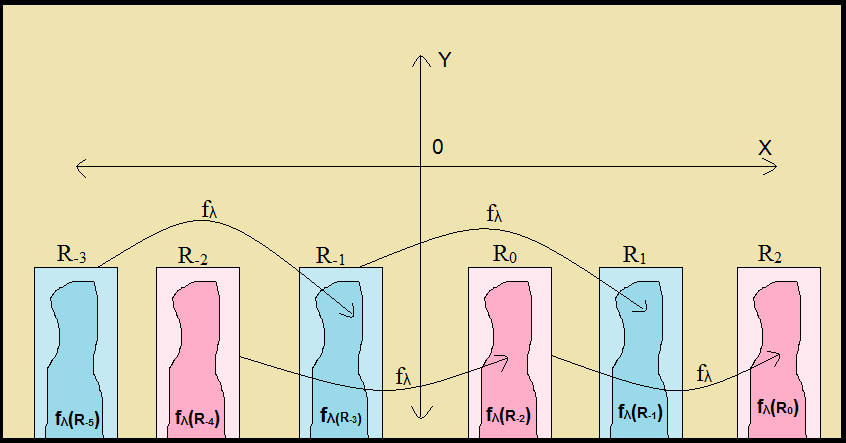}
  	\caption{Visualization of $R_m$'s for $k=2$.}
  	\label{RMF}
  \end{figure}

  Let $W$ be the Fatou component containing $R_0$. Then all the $W_n$s are distinct giving that $W$ is a wandering domain. 
  \begin{enumerate}
 \item Note that  $R_{nk}$ is in the Fatou set and is contained in $W_n$ for each $n$. Further, $f_{\lambda}^n \to \infty$ on $W$. Thus, $W$ is escaping.
 \item Since each $R_{nk}$ contains a critical point of $f_\lambda$, each $W_n$ contains a critical point. It cannot contain more than one critical point as each two critical point are separated by a vertical line contained in $B$.  For the same reason, no $W_n$ is horizontally spread. By Lemma~\ref{proper}(2), $f_\lambda: W_n \to W_{n+1}$ is proper. Its degree is $2$ by the Riemann-Hurwitz formula. Let, for a natural number $n$, $W_{-n}$ be the wandering domain containing $R_{-kn}$ such that $f^{n}_\lambda (W_{-n})=W$. The above argument gives that $f_\lambda: W_m \to W_{m+1}$ is proper map with degree $2$ for all negative integer $m$.
  	\item  If $W'$ is a wandering domain in the grand orbit  and is different from all $W_n$ then there is no critical point in $W'$ and the map $f_\lambda$ is one-one on $W'$ by the Riemann-Hurwitz formula.
  \end{enumerate}  
  It can be seen that, for $i\in \{1,2,\cdots k-1\}$, the Fatou component containing $R_i$ is also a wandering domain $\mathcal{W}^i$ and their forward orbits are disjoint from each other and also from $W$. Thus, there are $k$ wandering domains with distinct forward orbits. Clearly, their grand orbits are also different. 
  \par
  Note that $f_\lambda$ is topologically hyperbolic. Using similar argument as given in Theorem \ref{attractingdomaincomplete}(2), it can be shown that $f_\lambda$ does not have any periodic Fatou component except $B$ or any other wandering domain.

\end{proof}
\begin{rem}
For $k<0$, there are wandering domains $W$ with the same properties except that $\Re(f_{\lambda}^n) \to -\infty$ on $W$ as mentioned in Theorem~\ref{wandeing domain}.
\end{rem}
\section{Concluding remarks}
We first summarize the dynamics of $f_\lambda$ in terms of the parameter $\lambda$ for $\Im(\lambda)>0$ (Figure~\ref{parameterplane}). Since for every $\lambda$, $f_\lambda$ has a completely invariant Baker domain, the primary Fatou component, we describe the other Fatou components only. An archetype of the parameter plane is givend in Figure \ref{parameterplane}(a). Its computer generated version is given in Figure \ref{parameterplane}(b). The parameters in the strip $\{\lambda: 0<\Im(\lambda)<1\}$ (seen in yellow) correspond to $f_\lambda$ with an invariant Baker domain as mentioned in Theorem~\ref{repelling-one}. This is the only non-primary Fatou component if $\Re(\lambda)=k \pi$ whenever $k \in \mathbb{Z}$. 
The parameters in the yellow region $\{\lambda: |2+\lambda^2|<1\}$, we call this the attracting lobe, correspond to the existence of infinitely many invariant attracting domains as described in Theorem~\ref{attractingdomaincomplete}.
	For a fixed integer $k$, $f_{\lambda+k \pi}^n(z)=n k \pi+f_{\lambda}^n (z)$ for every natural number $n$ and $z \in \mathbb{C} $. If $|2+\lambda^2|<1 $ and $A_\lambda$ is an attracting domain of $f_\lambda$ then  $f^n_{k \pi+\lambda} \to \infty$ uniformly on each compact subset of $A_\lambda$. In other words, all the attracting domains of $f_\lambda$ are contained in the Fatou set of $f_{\lambda +k \pi}$. For $k \neq 0$, with some extra effort these attracting domains of $f_\lambda$ have been shown to be wandering domains for $f_{i \frac{\pi}{2}+k \pi}$ in Theorem~\ref{wandeing domain}. Further, since all the critical points in $H^-$ of $f_\lambda$ are in the invariant attracting domains, the function $f_{\lambda+k \pi}$ is topologically hyperbolic. The dynamics of $f_\lambda$ for other values of $\lambda$ is to be taken up later.
  The primary Fatou component is the only Fatou component of $f_\lambda$ and the Julia set is disconnected whenever $\lambda$ is in the yellow strip $\{\lambda: \Im(\lambda)> \sqrt{2}+\sinh^{-1}1\}$ above the attracting lobe.  This is given in Theorem~\ref{repelling-one}(2). It is important to note that the attracting lobe does not touch this strip. The situation for $f_\lambda$ is the same when $\Im(\lambda)= \sqrt{2}+\sinh^{-1}1$ but $\Re(\lambda) \neq k \pi+\frac{\pi}{2}, k \in \mathbb{Z}$. For $\lambda=k \pi+\frac{\pi}{2}+i(\sqrt{2}+\sinh^{-1}1 )$, the poles become the critical values and the function is no longer topologically hyperbolic. But the dynamics seems to be tractable!
%

\begin{figure}[H]
	\centering
	\subfloat[]
	{\includegraphics[width=2.85in,height=2.0in]{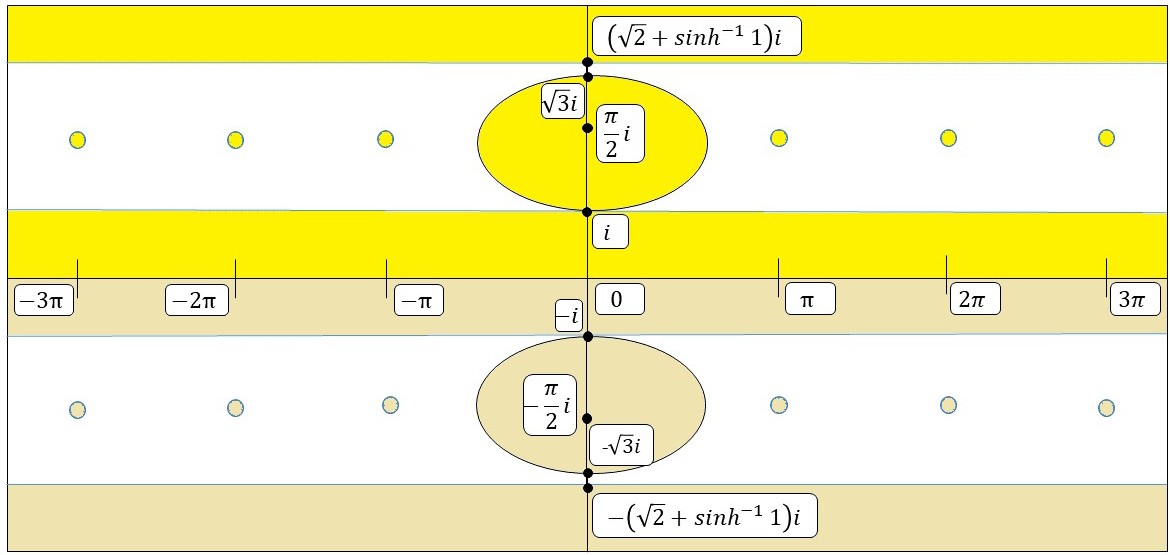}}
	\vspace{.2cm}
	\subfloat[]	{\includegraphics[width=2.75in,height=2.0in]{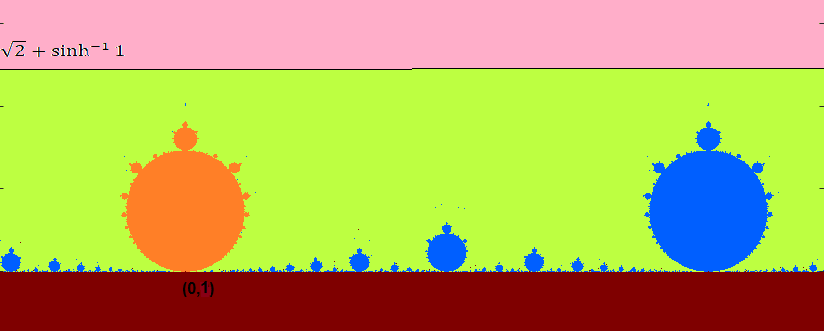}}
	\caption{The parameter plane}
	\label{parameterplane}
\end{figure}

 Some of the dynamically crucial properties of $f_\lambda$ are due to $\tan z$. In place of $\tan z$, one may consider a periodic meromorphic function $h$ such that $1+h'(z)=g(h)$ for an entire function $g$. If $F_\lambda (z)=\lambda+z+h(z)$ is such a function then the following are true.

\begin{enumerate}
	\item The function $F_\lambda$ has infinitely many fixed points for all except possibly two values of $\lambda$ and the multiplier of every fixed point is $g(-\lambda)$. To see it,  note that every fixed point $z_0$ of $F_\lambda$  satisfies  $h(z_0)=-\lambda$ and since $h$ is meromorphic, for all but atmost two values of $\lambda$, $h(z_0)=-\lambda$ has infinitely many solutions.
The multiplier of $z_0$ is $F_\lambda'(z_0)=1+h'(z_0)=g (h(z_0))=g (-\lambda)$.
\item The Fatou set (and therefore the Julia set) of $F_\lambda$ is $w$-invariant where $w$ is the period of $h$. This follows from the fact that $F^n_{\lambda}(z+w )=w+F^n_{\lambda}(z)$ for all $n$ and $z \in \mathbb{C}$. 
\item The set of  all the singular values of $F_\lambda$ is unbounded  whenever $g$ has at least three distinct roots. To see it, first note that  the critical points of  $F_\lambda$ are the solutions of $g(h(z))=0$. Since $g$ has at least three distinct roots, there is a solution of $g(h(z))=0$. If $g(h(c))=0$ for some $c$ then
for each $n \geq 0$, $g(h(c+nw))=g(h(c))=0$ and $c+nw$ is  a critical point of $F_\lambda$.  The corresponding critical values are $F_\lambda (c+n w)=\lambda+c+nw+h(c)$. We are done as the set $\{\lambda+c+nw+h(c): n \geq 0\}$ of critical values of $F_\lambda$ is unbounded. 
\end{enumerate}
 The dynamics of $F_{\lambda}$ can be studied possibly under some additional conditions on $h$.
 
 \section{Competing Interests}
 On behalf of all authors, the corresponding author states that there is no conflict of interest.
\bibliographystyle{amsplain}
\addcontentsline{toc}{chapter}{\numberline{}References}
 
\end{document}